\newtheorem{definition}{Definition}[section]
\newtheorem{proposition}[definition]{Proposition}
\newtheorem{theorem}[definition]{Theorem}
\newtheorem{corollary}[definition]{Corollary}
\newtheorem{lemma}[definition]{Lemma}
\newtheorem{example}[definition]{Example}
\newcommand{\COMMENT}[1]{}
\newcommand{\<}[1]{\vadjust{\vbox to 0pt{\vss\vskip-8pt\leftline{%
     \llap{\rm\hbox{\vbox{\pretolerance=-1
     \doublehyphendemerits=0\finalhyphendemerits=0
     \hsize16truemm\tolerance=10000\small
     \lineskip=0pt\lineskiplimit=0pt
     \rightskip=0pt plus16truemm\baselineskip8pt\noindent
     \hskip0pt        %(without this, the first word is never hyphenated!)
     #1\endgraf}\hskip7truemm}}}\vss}}}
\renewcommand{\<}[1]{}
\newcommand{\sm}{\smallsetminus}
\newcommand{\sub}{\subseteq}
\newcommand{\subne}{\subsetneq}
\newcommand{\supe}{\supseteq}
\newcommand{\es}{\emptyset}
\newcommand{\B}{\ensuremath\mathcal{B}}
\newcommand{\C}{\ensuremath\mathcal{C}}
\newcommand{\E}{\ensuremath\mathcal{E}}
\newcommand{\Emax}{\ensuremath\mathcal{E^{\rm max}}}
\newcommand{\F}{\mathbb F}
\newcommand{\I}{\ensuremath\mathcal{I}}
\newcommand{\Imax}{\ensuremath\mathcal{I^{\rm max}}}
\newcommand{\N}{\mathbb N}
\newcommand{\R}{\mathbb R}
\newcommand{\Xbar}{{\overline X}}
\newcommand{\Bbar}{{\overline B}}
\newcommand{\dcl}[1]{\ensuremath\lceil#1\rceil}% down-closure
\newcommand{\cl}{\ensuremath{{\rm cl}}}% closure operator
\newcommand{\mfc}{\ensuremath{M_{\rm FC}}}% finite cycles
\newcommand{\mc}{\ensuremath{M_{\rm TC}}}% topological cycles
\newcommand{\Ccal}{{\cal C}}
\newcommand{\Dcal}{{\cal D}}
\newcommand{\mfb}{\ensuremath{M_{\rm FB}}}% finite bonds
\newcommand{\mb}{\ensuremath{M_{\rm B}}}% bonds
\title{Axioms for infinite matroids}
\author{Henning Bruhn \and Reinhard Diestel \and Matthias Kriesell
\and Rudi Pendavingh \and Paul Wollan}
\date{}
\begin{document}
\maketitle

\begin{abstract}\noindent
We give axiomatic foundations for infinite matroids with duality, in terms of independent sets, bases, circuits, closure and rank. Continuing work of Higgs and Oxley, this completes the solution to a problem of Rado of 1966.\looseness=-1
\end{abstract}

\section*{Introduction}

Traditionally, infinite matroids are most often defined like finite ones,%
   \footnote{The augmentation axiom is required only for finite sets: given independent sets $I,I'$ with $|I| < |I'| < \infty$, there is an $x\in I'\sm I$ such that $I+x$ is again independent.} with the following additional axiom:

\begin{itemize}
\item[\rm (I4)] An infinite set is independent as soon as all its finite subsets are independent.
\end{itemize}

\noindent
We shall call such set systems {\em finitary matroids\/}.

The additional axiom (I4) reflects the notion of linear independence in vector spaces, and also the absence of (finite) circuits from a set of edges in a graph. Conversely, it is a direct consequence of (I4) that circuits, defined as minimal dependent sets, are finite.

Historically, the introduction of axiom (I4) coincided with the discovery in the first half of the 20th century of what is now called the `compactness technique' in infinite combinatorics. And indeed, if we define infinite matroids via~(I4), we find that while not all statements about finite matroids extend to infinite ones, those that do tend to extend `by compactness'.%
   \footnote{Among the infinite matroids introduced in this paper, the finitary ones will be precisely those whose set of independent sets is closed in the (compact) power set of their ground set.}
   Today, compactness proofs in infinite combinatorics are considered standard. The fact that (I4) restricts infinite matroids to those structures whose essential properties can be derived by (mere) compactness is no longer seen as an asset, but as a sign of the limited added diversity of infinite matroid theory over the finite theory.

The axiom (I4) is also the crudest possible way of specifying the infinite independent sets in an infinite matroid, given its finite independent sets. Indeed, suppose we have a collection of finite subsets of an infinite set that satisfy the independence axioms for finite matroids. Which infinite sets can we declare as independent and remain consistent with those axioms? Since independence should be hereditary under taking subsets, we can only take sets whose finite subsets are independent. Axiom~(I4) simply tells us to take all of these.%
   \COMMENT{}

The most devastating consequence of (I4), however, is that it spoils duality, one of the key features of finite matroid theory. For example, the cocircuits of an infinite uniform matroid of rank~$k$ would be the sets missing exactly $k-1$ points; since these sets are infinite, however, they cannot be the circuits of another finitary matroid. Similarly, every bond of an infinite graph would be a circuit in any dual of its cycle matroid---a set of edges minimal with the property of containing an edge from every spanning tree---%
   \COMMENT{}%
   but these sets can be infinite and hence will not be the circuits of a finitary matroid.

This situation prompted Rado in 1966 to ask for the development of a theory of non-finitary infinite matroids with duality~\cite[Problem P531]{Rado66matroids}.%
   \COMMENT{}
   The collection of independent sets in such matroids would have to be a more subtly chosen subcollection of the sets specified by~(I4), balanced carefully to make duality possible. The collection of circuits for such matroids would necessarily have to allow for infinite circuits.

Rado's challenge caused some serious activity in the late 1960s (see e.g.\ \cite{Oxley78} for references), in which several authors suggested numerous possible notions of infinite matroids.%
   \COMMENT{}
   Each of these highlighted one of the aspects of finite matroids (usually closure), some had duality built in by force, but none came with a set of axioms similar to those known from finite matroids: axioms that would make these structures the models of what was visibly a theory of finite and infinite matroids. This situation led to the popular belief, common to this day, that Rado's problem may have no solution: that there may be no theory of infinite matroids with all the usual aspects including duality.%
   \footnote{Compare Oxley~\cite[p.$\,$68]{OxleyBook}.}%
   \COMMENT{}

Despite these negative expectations, those early activities made an important contribution: they identified necessary conditions which any theory of infinite matroids would have to satisfy. Eventually, Oxley~\cite{Oxley92} proved that any theory of infinite matroids with duality and minors as we know them would have as its models certain%
   \COMMENT{}
   structures that Higgs~\cite{Higgs69duality} had proposed as `B-matroids'. Although Oxley~\cite{Oxley78,OxleyThesis} had earlier found a set of axioms for these `B-matroids' resembling a mixture of independence and base axioms, it remained an open problem whether axiom sets of the kind known from finite matroids existed to capture these structures---axiom sets that would make them accessible to the tools and techniques that finite matroid theory had developed over the years.

Our aim in this paper is to finally settle Rado's problem, in the affirmative. We propose five equivalent sets of matroid axioms, in terms of independent sets, bases, circuits, closure and rank, that make duality possible. They will allow for infinite circuits, but default to finitary matroids when their circuits happen to be finite. Duality will work as familiar from finite matroids: the cobases are the complements of bases, and there are well-defined and dual operations of contraction and deletion extending the familiar finite operations.

Generic examples of these matroids abound: they include the duals of all finitary matroids, a~vast class of structures that can now also be described in matroidal terms. (These duals are not normally finitary.) For example, there are now matroid duals of vector spaces. This may, for the first time, facilitate a genuine use of matroidal techniques in linear algebra: an idea that motivated the creation of matroid theory, but which never came to fruition since matroid proofs without duality can usually be carried out in the vector space directly. The matroid duals of vector spaces and their submatroids have been characterized recently by Afzali and Bowler~\cite{AfzaliBowler12}, based on the axioms introduced in this paper.\looseness=-1

One particularly striking example where the now existing dual of a finitary matroid was found to describe, and shed a new light on, a previously known important structure outside matroid theory is the following. When a graph~$G$ is infinite (but locally finite), its first homology is best described~-- in the sense that it captures similar structural features of $G$ as does its simplicial first homology when $G$ is finite~-- not by the (simplicial or singular) first homology of $G$ itself, but by a non-singular homology theory based on possibly infinite sums of the edge sets of topological circles in its Freudenthal compactification~$|G|$; see~\cite{Hom1, FundGp, Hom2}. It turns out that these edge sets of circles in $|G|$ are precisely the circuits of the dual of the finite-bond matroid of~$G$, a finitary matroid that has long been known but which under (I4) had no dual. Considering that there is nothing topological in either the definition of the finite-bond matroid or in our infinite matroid axioms, the fact that these (edge sets of) topological circles come up as the cocircuits of finitary matroids seems to point to some deeper connections of the two fields than meets the eye.

There are also some `primal' infinite matroids that occur naturally outside matroid theory. For example, infinite matroids can now describe the duality of infinite graphs, which is more subtle than finite graph duality:%
   \COMMENT{}
   between their bonds and finite circuits, between their finite bonds and their topological circuits (as earlier), and in many new ways located between these two.%
   \COMMENT{}
   Planar infinite graphs can now be characterized by the duals of their cycle matroids, just as the finite planar graphs are characterized by their matroid duals via Whitney's theorem. There are also some algebraic examples, such as from simplicial homology. Further primal examples can be found in the existing literature on Higgs's `B-matroids', see e.g.~\cite{Bean76, Higgs69duality, Oxley78, LasVergnas71,Wojciechowski05}.\looseness=-1

Since our paper was first made available in preprint form~\cite{InfMatroidAxioms}, various authors have built on it to extend some of the classical results of finite matroid theory to infinite matroids, or to relate such infinite extensions to well-known conjectures outside matroid theory. Such extensions include Whitney's characterization of planar graphs~\cite{Whitney32, BruhnDiestelMatroidsGraphs}, excluded minor characterizations of classes of representable and graphic matroids~\cite{BC12:excluded_matroid_minors, BCC:graphic_matroids}, Tutte's linking theorem~\cite{TutteMenger65, BruhnWollanConInfMatroids, B13:TutteConnectivity}, the decomposition theorem of Cunningham-Edmonds and Seymour~\cite{CE, Seymour, DiestelHorevPostleDecomposition, BC13:Ubiquity}, the matroid union theorem of Rado, Nash-Williams and Edmonds~\cite{Rado42, NashWilliamsMatroidUnion, EdmondsMatroidUnion, A-HCF11:matroidunion}, cases of Edmonds's matroid intersection theorem~\cite{Edmonds1970, Edmonds1979, A-HCF11:matroidintersection, BowlerCarmesinMatroidIntersection, BC13:inter_psi}, and the base packing and covering theorems of Edmonds~\cite{Edmonds1979, A-HCF11:matroidunion, BowlerCarmesinMatroidIntersection} (in part anticipated by Horn, Nash-Williams, and Tutte). 

When developing our axioms we faced two challenges: to avoid the use of cardinalities, and to deal with limits. As concerns the latter, we want every independent set to extend to a base (so that there can be an equivalent set of base axioms, in which independent sets are defined as subsets of bases), and we want every dependent set to contain a circuit (so that there can be an equivalent set of circuit axioms, in which independent sets are defined as the sets not containing a circuit). It turns out that we have to require one of these as an additional axiom, but the other will then follow.

Devising axioms without reference to cardinalities is a more serious challenge. Consider two independent sets $I_1,I_2$ in a finite matroid. How can we translate the assumption, made in the third of the standard independence axioms, that $|I_1|<|I_2|$? If $I_1\sub I_2$, this is equivalent (for finite sets) to $I_1\subne I_2$, and we can use the latter statement instead. But if $I_1\not\sub I_2$, the only way to designate $I_1$ as `smaller' and $I_2$ as `larger' is to assume that $I_2$ is maximal among all the independent sets while $I_1$ is not---a much stronger statement (for finite matroids) that fails to capture size differences among non-maximal independent sets. Nevertheless, we shall see that this distinction will be enough.

Similarly, the cardinality of bases is too crude a measure for rank when it is infinite: although bases are equicardinal also for our matroids~\cite{Higgs69equicard}, deleting one element of an infinite independent set should reduce its rank by~1 but does not reduce its cardinality. Our solution to this problem will be to measure not absolute but `relative' rank: for finite matroids, this would be the amount by which the rank of a set $A$ exceeds that of a given subset~$B\sub A$, and it will be 1 in the above example. It turns out that the usual rank axioms can be rephrased for finite matroids in terms of such relative rank, in a way that yields an axiom system that becomes equivalent to the other systems also for infinite matroids. 

Our paper is organized as follows. In Section~\ref{axioms} we state our axiom systems for infinite matroids, and provisionally define infinite matroids as set systems satisfying the independence axioms. Section~\ref{examples} is devoted to examples of infinite matroids that are not necessarily finitary. We have also included some references to further results and examples of infinite matroids (by other authors) that have become possible since we first presented our axioms in~\cite{InfMatroidAxioms}. In Section~\ref{basics} we establish a minimum of basic properties of our infinite matroids (including duality and the existence of minors): those that will enable us in Section~\ref{eq} to prove that the independence axioms are in fact equivalent to the other axiomatic systems proposed in Section~\ref{axioms}, as well as to the traditional axioms when the matroid is finitary. Section~\ref{alt} provides some alternative axiom systems, which are more technical to state but may be easier to verify, and are hence worth knowing. We also include the mixed set of independence and base axioms developed for Higgs's `B-matroids' by Oxley~\cite{Oxley78,OxleyThesis}. In Section~\ref{non-matroids}, finally, we illustrate our axioms by examples of set systems that narrowly fail to satisfy them, by missing just one axiom each. In particular, our axioms are shown to be independent. 

Any matroid terminology not explained below is taken from Oxley~\cite{OxleyBook}. Terms used in our infinite graph examples, such as ends of graphs and their topology, are defined in~\cite{DiestelBook10noEE}. Let $E$ be any set, finite or infinite. This set~$E$ will be the default ground set for all matroids considered in this paper. We write $\overline X := E\sm X$ for complements of sets $X\sub E$, and $2^E$ for the power set of~$E$. The set of all pairs $(A,B)$ such that $B\subseteq A\subseteq E$ will be denoted by $(2^E\times 2^E)_\subseteq$; for its elements we usually write $(A|B)$ instead of~$(A,B)$.%
   \COMMENT{}
   Unless otherwise mentioned, the terms `minimal' and `maximal' refer to set inclusion. Given $\E\sub 2^E$, we write $\Emax$ for the set of maximal elements of~$\E$, and $\dcl{\E}$ for the {\em down-closure} of~$\E$, the set of subsets of elements of~$\E$. For $F\sub E$ and $x\in E$, we abbreviate $F\sm\{x\}$ to $F-x$ and $F\cup\{x\}$ to~$F+x$. We shall not distinguish between infinite cardinalities and denote all these by~$\infty$; in particular, we shall write $|A|=|B|$ for any two infinite sets $A$ and~$B$. The set $\N$ contains~0.

\section{Axiom systems for infinite matroids}\label{axioms}

In this section we present our five systems of axioms for infinite matroids. They are stated, respectively, in terms of independent sets, bases, closure, circuits and rank.

One central axiom that features in all these systems is that every independent set extends to a maximal one, inside any restriction $X\sub E$.%
   \footnote{Interestingly, we shall not need to require that every dependent set contains a minimal one. We need that too, but will be able to prove it; see Section~\ref{basics}.}
    The notion of what constitutes an independent set, however, will depend on the type of axioms under consideration. We therefore state this extension axiom in more general form right away, without reference to independence, so as to be able to refer to it later from within different contexts.

Let $\I\sub 2^E$. The following statement describes a possible property of~$\I$.

\begin{itemize}
\item[\rm (M)] Whenever $I\sub X\sub E$ and $I\in\I$, the set $\{\,I'\in\I\mid I\sub I'\sub X\,\}$ has a maximal element.
\end{itemize}
Note that the maximal superset of $I$ in~$\I\cap 2^X$ whose existence is asserted in~(M) need not lie in~$\Imax$.

\subsection{Independence axioms}\label{independenceaxioms}

The following statements about a set $\I\sub 2^E$ are our {\em independence axioms\/}:
\begin{itemize}
\item[\rm (I1)] $\emptyset\in\mathcal I$.
\item[\rm (I2)] $\dcl{\I}=\I$, i.e., $\I$ is closed under taking subsets.
\item[\rm (I3)] For all $I\in\I\sm\Imax$ and $I'\in\Imax$ there is an $x\in I'\sm I$ such that $I+x\in\I$.\looseness=-1
\item[\rm (IM)] $\I$ satisfies~(M).
\end{itemize}

\noindent
We remark that although (IM) formally depends on our choice of $E$ as well as that of~$\I$, this dependence on $E$ is not crucial: if $\I$ satisfies (IM) for some set~$E$ large enough that $E\supe\bigcup\I$, it does so for every such set~$E'$.%
   \COMMENT{}

\medbreak

When a set $\I\sub 2^E$ satisfies the independence axioms, we call the pair $(E,\I)$ a {\em matroid\/} on~$E$. We then call every element of $\I$ an {\em independent\/} set, every element of $2^E\sm\I$ a {\em dependent\/} set, the maximal independent sets {\em bases}, and the minimal dependent sets {\em circuits}. The $2^E\to 2^E$ function mapping a set $X\sub E$ to the set
 $$\cl(X) := X\cup \{\,x\mid \exists\, I\sub X\colon I\in\I\ \text{but}\ I+x\notin\I\,\} $$
will be called the {\em closure operator\/} on $2^E$ {\em associated with~$\I$.} The $(2^E\times 2^E)_\subseteq\to \N\cup\{\infty\}$ function $r$ that maps a pair $A\supe B$ of subsets of $E$ to
 $$r(A|B) := \max\, \{\,|I\sm J| : I\supe J,\ I\in\I\cap 2^A,\ J\text{ maximal in }\I\cap 2^B\}$$
will be called the {\em relative rank function\/} on the subsets of $E$ {\em associated with~$\I$.} We shall see in Section~\ref{basics} that this maximum is always attained and independent of the choice of~$J$ (Lemma~\ref{rwelldefined}).

\subsection{Base axioms}

The following statements about a set $\B\sub 2^E$ are our {\em base axioms\/}:
\begin{itemize}
\item[\rm (B1)] $\B\ne\es$.
\item[\rm (B2)] Whenever $B_1,B_2\in\B$ and $x\in B_1\sm B_2$, there is an element $y$ of~$B_2\sm B_1$ such that $(B_1-x)+y\in\B$.\COMMENT{}
\item[\rm (BM)] The set $\I:=\dcl{\B}$ of all {\em $\B$-independent\/} sets satisfies~(M).
\end{itemize}

\subsection{Closure axioms}

The following statements about a function $\cl\colon 2^E\to 2^E$ are our {\em closure axioms\/}:
\begin{itemize}
\item[\rm (CL1)] For all $X\sub E$ we have $X\sub \cl(X)$.
\item[\rm (CL2)] For all $X\sub Y\sub E$ we have $\cl(X)\sub \cl(Y)$.
\item[\rm (CL3)] For all $X\sub E$ we have $\cl(\cl(X)) = \cl(X)$.
\item[\rm (CL4)] For all $Z\sub E$ and $x,y\in E$, if $y\in\cl(Z+x)\sm\cl(Z)$ then $x\in\cl(Z+y)$.
\item[\rm (CLM)] The set $\I$ of all \cl-{\em independent\/} sets satisfies~(M). These are the sets $I\sub E$ such that $x\notin\cl(I-x)$ for all $x\in I$.
\end{itemize}

\subsection{Circuit axioms}

The following statements about a set $\C\sub 2^E$ are our {\em circuit axioms\/}:
\begin{itemize}
\item[\rm (C1)] $\es\notin\C$.
\item[\rm (C2)] No element of $\C$ is a subset of another.
\item[\rm (C3)] Whenever $X\sub C\in\C$ and $(C_x\mid x\in X)$ is a family of elements of~$\C$ such that $x\in C_y\Leftrightarrow x=y$ for all $x,y\in X$, then for every $z \in C\sm \left( \bigcup_{x \in X} C_x\right)$ there exists an element $C'\in\C$ such that $z\in C'\sub \left(C\cup  \bigcup_{x \in X} C_x\right) \sm X$.%
   \COMMENT{}
\item[\rm (CM)] The set $\I$ of all {\em $\C$-independent\/} sets satisfies~(M). These are the sets $I\sub E$ such that $C\not\sub I$ for all $C\in\C$.
\end{itemize}
Axiom (C3) defaults for $|X|=1$ to the usual (`strong')%
   \COMMENT{}
   circuit elimination axiom for finite matroids. In particular, it implies that adding an element to a base creates at most one circuit;%
   \COMMENT{}
   the existence of such a ({\em fundamental\/}) circuit will follow from Lemma~\ref{C0}. For $|X|>1$, the inclusion of a specified element $z$ in $C'$ is not just a convenience%
   \COMMENT{}
   but essential: without it, the statement would in general be false even for finite matroids. (Take $X=C$ to be the rim of a wheel in its cycle matroid.) We shall see in Section~\ref{non-matroids} that the usual finite circuit elimination axiom is too weak to guarantee a matroid (Example~\ref{Bean}).

\subsection{Rank axioms}

The following statements about a function $r\colon (2^E\times 2^E)_\subseteq\to \N\cup\{\infty\}$ are our (relative) {\em rank axioms\/}:
\begin{itemize}
\item[\rm (R1)] For all $B\sub A\sub E$ we have $r(A|B)\le |A\sm B|$.
\item[\rm (R2)] For all $A,B\sub E$ we have $r(A|A\cap B)\ge r(A\cup B|B)$.
\item[\rm (R3)] For all $C\sub B\sub A\sub E$ we have $r(A|C) = r(A|B) + r(B|C)$.
\item[\rm (R4)] For all families $(A_\gamma)$ and $B$ such that $B\sub A_\gamma\sub E$ and $r(A_\gamma|B)=0$ for all~$\gamma$, we have $r(A|B)=0$ for $A := \bigcup_\gamma A_\gamma$.
\item[\rm (RM)] The set $\I$ of all $r$-{\em independent\/} sets satisfies~(M). These are the sets $I\sub E$ such that $r(I|I-x) > 0$ for all $x\in I$.
\end{itemize}

For finite matroids, these axioms (with (R4) and (RM) becoming redundant) are easily seen to be tantamount to the usual axioms for an absolute rank function~$R$ derived as $R(A) := r(A|\es)$, or conversely with ${r(A|B) := R(A) - R(B)}$%
   \COMMENT{}
   for $B\sub A$.%
   \COMMENT{}

\section{Examples}\label{examples}%
   \COMMENT{}

The purpose of this section is to show that the infinite matroids just defined do occur in nature: we give a small collection of natural examples from contexts in which, working on other problems, we encountered these matroids, and which made us look for a general definition.

Before we start, let us note that, for finite set systems, our definition of a matroid coincides with the usual definition. Indeed, finite matroids defined as usual are matroids in our sense: this is most easily seen in terms of our base or closure axioms, which for finite $E$ coincide with the usual base or closure axioms. More generally, all traditional finitary matroids%
   \COMMENT{}
  are matroids in our sense, as axiom (IM) follows by Zorn's Lemma; see Corollary~\ref{OldFinitaryAreMatroids} for an explicit proof.\looseness=-1

Conversely, if a matroid in our sense happens to be finite or finitary (i.e., satisfies~(I4) in addition to our axioms),%
   \COMMENT{}
   it also satisfies the usual axioms for finite or finitary matroids: the finite augmentation axiom (see the introduction) is easy to deduce from Lemma~\ref{card} below, applied in our matroid's restriction to $I\cup I'$.%
   \COMMENT{}
    Since every dependent set contains a circuit (Lemma~\ref{C0}), a matroid in our sense is finitary if and only if it has only finite circuits.

In what follows we shall concentrate on non-finitary matroids.

\subsection{Generic non-finitary matroids}

Since classical finitary matroids are matroids in our sense, and our matroids have duals, we at once have a large class of new matroids: duals of finitary matroids that are not themselves finitary. We already saw an example in the introduction: the duals of uniform matroids of finite rank. We remark that having a non-finitary dual is the rule rather than the exception for a finitary matroid: Las Vergnas~\cite{LasVergnas71} and Bean~\cite{Bean76} showed that the only finitary matroids with finitary duals are the direct sums of finite matroids.%
   \COMMENT{}

\subsection{Cycle and bond matroids in graphs}\label{SecCycleBond}

There are two standard matroids associated with a graph $G$, both finitary: the {\em finite-cycle matroid}~$\mfc(G)$ whose circuits are the edge sets of the (finite) cycles of~$G$, and the {\em finite-bond matroid}~$\mfb(G)$ whose circuits are the finite bonds of~$G$. (A~{\em bond\/} is a minimal non-empty cut.) In a finite graph these two 
matroids are dual.

When $G$ is infinite, the dual of $\mfc(G)$ is not $\mfb(G)$ but the full {\em bond matroid}~$\mb(G)$. This is the matroid whose circuits are all the bonds of~$G$, finite or infinite: these, as is easy to show, are the minimal edge sets meeting all the spanning trees of~$G$ (connected), the bases of~$\mfc(G)$.%
   \COMMENT{}
   Similarly, the dual of $\mfb(G)$ is no longer~$\mfc(G)$ but a matroid~$\mc(G)$ whose circuits can be infinite.

Surprisingly, this matroid $\mc(G)$ has a topological characterization~\cite{BruhnDiestelMatroidsGraphs}. When $G$ is connected and locally finite, it is particularly natural: its circuits are the edge sets of the topological circles in~$|G|$, the compact topological space obtained from $G$ by adding its ends.%
   \footnote{This space, also known as the Freudenthal compactification of~$G$, is the natural setting for most problems about locally finite graphs that involve paths and cycles. It has been extensively studied; see~\cite{DiestelBook10noEE, RDsBanffSurvey} for an introduction and overview.}
   Its bases are the edge sets of the {\em topological spanning trees\/} of~$G$, the arc-connected standard subspaces of $|G|$ that contain every vertex (and every end) but lose their connectedness if any edge is deleted.

\begin{theorem}$\!\!${\rm\cite{BruhnDiestelMatroidsGraphs}}\label{graphdualslocfin}
Let $G$ be a locally finite connected graph.
   \begin{enumerate}[\rm (i)]
   \item The dual of its finite-bond matroid $\mfb(G)$ is the matroid $\mc(G)$ whose circuits are the edge sets of the topological circles in $|G|$ and whose bases are the edge sets of the topological spanning trees of~$G$.
   \item{The dual of its finite-cycle matroid $\mfc(G)$ is its bond matroid~$\mb(G)$, whose circuits are the (finite or infinite) bonds of~$G$.}
   \end{enumerate}
\end{theorem}

\noindent
In Section~\ref{graphduality} we shall extend Theorem~\ref{graphdualslocfin} to a slightly larger class of graphs.

\medbreak

It has turned out that these four cycle- and bond-type matroids are only the extremes of a rich class of matroids associated with the topological circles or the bonds in a locally finite graph~$G$. Indeed, whenever $\Psi$ is a Borel subset of the full set of ends of~$G$, the edge sets of topological circles in the space obtained from $G$ by adding only the ends in $\Psi$ form a matroid. (There exist graphs $G$ with some non-Borel sets $\Psi$ of ends for which this is not the case.) The duals of these matroids have as their circuits the bonds of $G$ that have {\em no\/} end of~$\Psi$ in their closure. See Bowler and Carmesin~\cite{BC13:Determinacy} for details.

\subsection{Matroids describing the duality of planar graphs}\label{graphduality}

Whitney's theorem~\cite{DiestelBook10noEE} says that a finite graph $G$ is planar if and only if the dual of its cycle matroid is {\em graphic\/}, i.e., is the cycle matroid of some other graph.%
   \COMMENT{}
   Our matroids allow us to extend this to infinite graphs, as follows.

Thomassen~\cite{thomassen82} showed that any reasonable notion of duality for infinite graphs requires that these are \emph{finitely separable\/}: that any two vertices can be separated by a finite set of edges. The class of finitely separable graphs is slightly larger than that of locally finite graphs, and just right for duality: while locally finite graphs can have duals that are not locally finite (with respect to any reasonable notion of duality, e.g.\ geometrically in the plane), duals of finitely separable graphs, as defined formally below, are again finitely separable.

Since bonds can be infinite, any adaptation of graph duality to infinite graphs that takes account of all bonds%
   \COMMENT{}
   requires a notion of possibly infinite circuits for graphs: the edge sets which, if the graph is planar, will be the bonds of its dual. The notion that works for finitely separable graphs extends that defined for locally finite graphs in Section~\ref{SecCycleBond}: take as the {\em circuits\/} the edge sets of topological circles in the quotient space $\tilde G$ of $|G|$ obtained by identifying every vertex with all the ends from which it cannot be separated by finitely many edges.%
   \footnote{Equivalently: by finitely many vertices. Another way of obtaining $\tilde G$ is to start not from $|G|$ but directly from~$G$: we simply add only those ends that are not dominated by a vertex in this way, while making rays of the other ends converge to the vertex dominating that end. See~\cite{RDsBanffSurvey,TST} for details.}%
   \COMMENT{}
   (Note that, since $G$ is finitely separable, no two vertices are identified with the same end.) 

As in locally finite graphs, these edge sets are the circuits of a matroid, the {\em topological cycle matroid\/} $\mc(G)$ of~$G$. As before, this is the dual of the finite-bond matroid~$\mfb(G)$. The bonds of $G$, finite or infinite, are also once more the circuits of a matroid, the {\it bond matroid\/}~$\mb(G)$ of~$G$.

\begin{theorem}$\!\!${\rm\cite{BruhnDiestelMatroidsGraphs}}\label{MCinGtilde}
Let $G$ be a finitely separable connected graph.
   \begin{enumerate}[\rm (i)]\itemsep=0pt
   \item The topological cycle matroid of $G$ is the dual of its finite-bond matroid.
   \item The bond matroid of $G$ is the dual of its finite-cycle matroid.
   \end{enumerate}
\end{theorem}

A finitely separable graph $G^*$ is a {\em dual\/} of a finitely separable graph~$G$ with the same edge set if the bonds of $G^*$ 
are precisely the circuits of~$G$, the edge sets of the topological circles in~$\tilde G$. 
It has been shown in~\cite{duality} that, if $G$ is 3-connected, this graph $G^*$ is unique, 3-connected, and has $G$ as its unique dual, so $G^{**}=G$.

By Theorem~\ref{MCinGtilde}, graph duality commutes with matroid duality:%
   \COMMENT{}

\begin{corollary}\label{graphicduality}
If $G$ and $G^*$ are dual finitely separable graphs, then
 $$\mfc^*(G) = \mc(G^*) \quad \text{and} \quad \mfb^*(G) = \mb(G^*).$$
  \vskip-12pt\vskip0pt\qed
\end{corollary}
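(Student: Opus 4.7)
The plan is to chase definitions, applying Theorem~\ref{MCinGtilde} once to $G$ and once to $G^*$, together with the very definition of graph duality. First I would record the consequences of duality: by definition the bonds of $G^*$ are precisely the circuits of $G$ (i.e.\ the edge sets of topological circles in $\tilde G$); and since ``dual'' is a symmetric relation ($G^{**}=G$ being guaranteed in the 3-connected case stated just above, and otherwise built into the phrasing ``$G$ and $G^*$ are dual''), the bonds of $G$ are likewise the circuits of $G^*$.

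For the second equation $\mfb^*(G)=\mb(G^*)$: Theorem~\ref{MCinGtilde} gives $\mfb^*(G)=\mc(G)$. The circuits of $\mc(G)$ are by definition the circuits of $G$, which by graph duality are the bonds of $G^*$, which are exactly the circuits of $\mb(G^*)$. Two matroids on the same ground set with the same circuits coincide, so $\mc(G)=\mb(G^*)$, and the claim follows.

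For the first equation $\mfc^*(G)=\mc(G^*)$: applying Theorem~\ref{MCinGtilde} to $G^*$ yields $\mc(G^*)=\mfb^*(G^*)$, so it suffices to prove $\mfc(G)=\mfb(G^*)$. The circuits of $\mfc(G)$ are the finite cycles of $G$, and the circuits of $\mfb(G^*)$ are the finite bonds of $G^*$. By duality the finite bonds of $G^*$ are the finite (topological) circuits of $G$, and the small auxiliary observation I need is that a topological circle in $\tilde G$ whose edge set is finite uses only finitely many vertices and no ends, so it is an ordinary finite cycle of $G$. Thus the two finitary matroids $\mfc(G)$ and $\mfb(G^*)$ have the same circuits, and hence coincide.

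The proof is essentially pure definition-chasing once Theorem~\ref{MCinGtilde} and the duality relation ``bonds of one = circuits of the other'' are in place. The only step requiring a sentence of justification rather than a citation is the identification of finite topological circles in $\tilde G$ with finite cycles of $G$; everything else is a one-line consequence of matroid duality applied to an equality of circuit families.
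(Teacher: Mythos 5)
Your proof is correct and takes essentially the route the paper intends: the corollary is stated there with the \qed built into the statement and no written proof, being treated as an immediate consequence of Theorem~\ref{MCinGtilde} applied to $G$ and to $G^*$ together with the definition of graph duality (bonds of $G^*$ $=$ circuits of $G$). Your definition-chase, including the one genuinely non-citable observation that a topological circle in $\tilde G$ with finitely many edges is an ordinary finite cycle of $G$, simply makes that implicit argument explicit.
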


\medbreak

Call a matroid \emph{topologically graphic} if it is the topological cycle matroid $\mc(G)$ of some graph~$G$, and \emph{finitely graphic} if it is the finite-cycle matroid $\mfc(G)$ of some graph~$G$. We then have the following infinite version of Whitney's theorem:

\begin{theorem}$\!\!${\rm\cite{BruhnDiestelMatroidsGraphs}}\label{Whitney}
The following three assertions are equivalent for a countable finitely separable graph~$G$:
\begin{enumerate}[\rm (i)]\itemsep=0pt
\item $G$ is planar;
\item $\mc^*(G)$ is finitely graphic;
\item $\mfc^*(G)$ is topologically graphic.
\end{enumerate}
\end{theorem}

\goodbreak

\noindent
The graphs witnessing (ii) and (iii) can also be chosen to be finitely separable~\cite{BruhnDiestelMatroidsGraphs}.%
  \COMMENT{}

\medbreak

As before, Corollary~\ref{graphicduality} and Theorem~\ref{Whitney} are but the extreme cases of a more subtle duality of planar graphs, which is reflected by the $\Psi$-matroids indicated at the end of Section~\ref{SecCycleBond}. See~\cite{BC13:Determinacy, DP11:DualTrees} for details. More on infinite graphic matroids can be found in~\cite{BCC:graphic_matroids}.

\subsection{The algebraic cycle matroid of a graph}\label{algebraic}

Another natural matroid in a locally finite graph $G$ is its {\em algebraic cycle matroid\/}: the matroid whose circuits are the {\em elementary algebraic cycles\/} of $G$ (in the sense of infinite simplicial 1-chains with zero boundary), the minimal non-empty edge sets inducing even degrees at all vertices. When $G$ is infinite, these are the edge sets of its (finite) cycles and those of its {\em double rays\/}, its 2-way infinite paths.

\begin{figure}[htbp]
  \centering
%\epsfxsize=0.7\hsize  
%\noindent
%\epsfbox{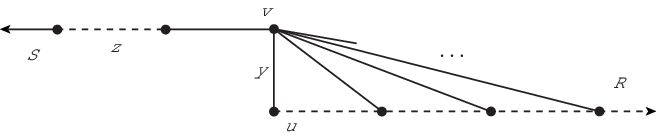}
  \includegraphics[width=0.7\linewidth]{}
  \caption{The Bean graph}
  \label{Beangraph}
\end{figure}

The elementary algebraic cycles do not form a matroid in every infinite graph: we shall see in Section~\ref{non-matroids} that they do not satisfy our circuit axioms when $G$ is the {\em Bean graph\/} shown in Figure~\ref{Beangraph}.
However, Higgs proved (for his `B-matroids'; but cf.\ Theorem~\ref{Bmatroids}) that this is essentially the only counterexample:

\begin{theorem}[Higgs~\cite{Higgs69graphs}]\label{HiggsBean}
The elementary algebraic cycles of an infinite graph $G$ are the circuits of a matroid on its edge set $E(G)$ if and only if $G$ contains no subdivision of the Bean graph.
\end{theorem}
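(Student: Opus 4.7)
The plan is to reduce the statement to Higgs's original theorem~\cite{Higgs69graphs} via Theorem~\ref{Bmatroids}, which identifies Higgs's B-matroids with matroids in our sense: under this correspondence ``being the circuits of a matroid'' is unambiguous, so Higgs's result transfers to our framework and we need only sketch the verification.

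Axioms (C1) and (C2) are immediate: finite cycles and double rays are nonempty, and one elementary algebraic cycle cannot properly contain another without forcing a vertex of degree at least $3$. For the reverse implication (Bean subdivision implies failure), I would exhibit an explicit violation of (C3) inside a subdivision of the Bean graph: two suitably chosen double rays $C, C'$ together with a shared edge set $X$ yield $(C\cup C')\sm X$ which contains a designated edge $z$ but no elementary algebraic cycle through $z$. The asymmetry built into the Bean graph, reflected in its figure, is precisely what makes this possible, and it is straightforward to read off concrete choices of $C, C', X, z$ from the picture.

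The forward direction is the main obstacle. For (C3), given $C\in\C$, a family $(C_x)_{x\in X}$ satisfying the incidence hypothesis, and $z\in C\sm\bigcup_{x\in X}C_x$, the candidate set $H:=(C\cup\bigcup_{x\in X}C_x)\sm X$ has all vertex degrees even and at most $4$ (each $C_y$ and $C$ contribute even degree locally, and the deleted $X$-edges balance the remaining contributions). Hence $H$ decomposes into edge-disjoint finite cycles and double rays. The task is to show that some such piece contains $z$: the only way this could fail is that every edge-path in $H$ starting at $z$ either dead-ends at a degree-$0$ vertex or escapes to infinity on both sides without closing into a double ray, and a careful structural analysis shows that this configuration forces a topological subdivision of the Bean graph in $G$, contrary to hypothesis. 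For (CM), given a chain $(I_\alpha)$ of $\C$-independent subsets of some $X\sub E(G)$, the union $I:=\bigcup_\alpha I_\alpha$ is again $\C$-independent: a finite cycle in $I$ would already lie in some $I_\alpha$, while a double ray in $I$ would, combined with the structure of the chain, again exhibit a Bean subdivision in $G$. Zorn's Lemma then supplies the required maximal element. The hard part throughout is the structural lemma linking the absence of a Bean subdivision to these closure properties, which is the content of Higgs's original graph-theoretic argument that we would transcribe.
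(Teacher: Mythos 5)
First, a framing point: the paper does not actually prove this theorem. It is cited from Higgs, with the parenthetical remark that Higgs proved it for his B-matroids and that Theorem~\ref{Bmatroids} transfers the result to matroids in the present sense; the only piece the paper verifies itself is the ``only if'' direction for the Bean graph proper, in Example~\ref{Bean}. Your opening reduction via Theorem~\ref{Bmatroids} therefore matches the paper exactly, and deferring the structural core to Higgs's original argument is, in itself, consistent with what the authors do. The problem is that the concrete steps you \emph{do} supply are wrong in ways that matter.

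For the reverse direction, a violation of (C3) using ``two suitably chosen double rays $C,C'$'' cannot exist: Example~\ref{Bean}(i) states explicitly that the usual finite (two-circuit) elimination axiom \emph{holds} in the Bean graph, and that only the infinite version (C3) fails. The genuine counterexample requires an infinite family: $C=E(R)\cup E(S)$, $X=E(R)$ infinite, $z\in E(S)$, and $C_x$ the triangle through $x$ for each $x\in X$; after deleting $X$ the edge $z$ lies on no double ray. In the forward direction, your claim that $H=(C\cup\bigcup_{x\in X}C_x)\sm X$ has all degrees even and at most $4$ is false -- in the very counterexample above the dominating vertex $v$ has infinite degree in $H$ and the vertices of $R$ have degree $1$ -- so the proposed decomposition of $H$ into elementary algebraic cycles has no basis. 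Finally, your Zorn argument for (CM) fails at its key step: the union of a chain of $\C$-independent sets need not be $\C$-independent even in graphs with no Bean subdivision, since a double ray is the union of an increasing chain of finite paths, each of which is independent. This is why the paper's own verification of (CM) for the Bean graph (and the direct proof for $T_\infty$ referenced in Example~\ref{Tinfty}) instead applies Zorn to the sets containing no \emph{finite} circuit -- a genuinely chain-closed family -- and then deletes one edge from the at most one resulting double ray. As it stands, every step of your sketch that goes beyond ``quote Higgs and Theorem~\ref{Bmatroids}'' contains a gap.
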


\begin{corollary}
The elementary algebraic cycles of any locally finite graph are the circuits of a matroid.\qed
\end{corollary}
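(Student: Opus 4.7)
The plan is to derive this immediately from Theorem~\ref{HiggsBean}: it suffices to show that a locally finite graph contains no subdivision of the Bean graph. The approach is purely structural, and no further matroid theory is needed beyond citing the theorem.

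The key observation I would use is that the Bean graph $B$ (as drawn in Figure~\ref{Beangraph}) contains at least one vertex of infinite degree. Recall that in a subdivision $TB$ of $B$, the branch vertices (those corresponding to the original vertices of~$B$) retain their $B$-degrees, since subdividing only inserts degree-2 vertices on edges. Hence any subdivision of $B$ has a vertex of infinite degree as well, and therefore cannot occur as a subgraph of a locally finite graph~$G$.

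Given this, the argument runs as follows. Let $G$ be locally finite. Every vertex of $G$ has finite degree, so $G$ contains no subgraph isomorphic to a subdivision of the Bean graph by the preceding paragraph. Theorem~\ref{HiggsBean} then applies and yields that the elementary algebraic cycles of $G$ form the circuits of a matroid on~$E(G)$.

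The only potential subtlety lies in verifying the initial structural claim about the Bean graph, namely that some vertex has infinite degree; this is read off directly from Figure~\ref{Beangraph}, so there is no real obstacle. In particular, no cardinality arguments, no verification of the circuit axioms, and no appeal to the topological-cycle description are required—the corollary is a one-line consequence of Higgs's theorem once the local-finiteness hypothesis is translated into the absence of high-degree branch vertices.
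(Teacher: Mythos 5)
Your argument is correct and is exactly the (implicit) one the paper intends: the corollary is stated with a \qed because the Bean graph's vertex $v$ is adjacent to every vertex of the ray $R$ and hence has infinite degree, so no subdivision of it can occur in a locally finite graph, and Theorem~\ref{HiggsBean} applies directly.
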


The dual of the algebraic cycle matroid of a graph $G$ can also be described: it is the matroid whose circuits are the minimal non-empty cuts dividing the graph into a rayless `small' side and the rest~\cite{BruhnDiestelMatroidsGraphs}.

\subsection{A matroid without finite circuits or cocircuits}

Most of the examples of infinite matroids we have seen so far are either finitary or cofinitary. The algebraic cycle matroids discussed in the last section, however, can have both infinite circuits and infinite cocircuits. The following example is an extreme case, in that its circuits and cocircuits are {\em all\/} infinite:

\begin{example}\label{Tinfty}
The matroid of the 
elementary algebraic cycles in the $\aleph_0$-regular tree~$T_\infty$ has no finite circuit and no finite cocircuit.
\end{example}

\proof
   Clearly, the elementary algebraic cycles of $T_\infty$ are just the edge sets of its double rays. 
Since $T_\infty$ does not contain the Bean graph as a subdivision, they are the circuits of a matroid
$MT_\infty$ on the edge set of~$T_\infty$, by Theorem~\ref{HiggsBean}.%
   \footnote{This can also be seen directly. Checking (C1--3) is easy; see \cite{HenningHabil} for a direct proof of~(CM).}%
   \COMMENT{}

To show that every cocircuit is infinite we borrow Lemma~\ref{circuitcocircuitcap} from Section~\ref{basics}, which says that a circuit and a cocircuit never meet in exactly one element. Since for any finite edge set $F$ in~$T_\infty$ it is easy to find a double ray meeting $F$ in exactly one edge, we deduce that $F$ cannot be a cocircuit.
   \endproof

\subsection{Representability and thin independence}\label{thin}

An important class of finite matroids are the representable matroids~\cite{WhittleRep05}. However, as matroids defined by linear independence are finitary, the dual of an infinite representable matroid will not, except in trivial cases, be representable. Representability thus seems to be a concept too narrow for infinite matroids. The following notion of \emph{thin independence}, which agrees with linear independence when the matroid is finite, leads to an otherwise slightly weaker%
   \COMMENT{}
   notion of representability that is more compatible with duality.

Let $F$ be a field, and let $A$ be some set. We say that a family $\Phi = (\varphi_i)_{i\in I}$ of functions $\varphi_i\colon A\to F$
is \emph{thin\/} if for every $a\in A$ there are only finitely many $i\in I$ with $\varphi_i(a)\neq 0$.
Given such a thin family~$\Phi$ of functions, their pointwise sum $\sum_{i\in I}\varphi_i$ is another $A\to F$ function.%
   \COMMENT{}
   We say that a family~$\Psi$ of $A\to F$ functions, not necessarily thin, is \emph{thinly independent}
if for every thin subfamily $\Phi = (\varphi_i)_{i\in I}$ of $\Psi$ and every corresponding%
   \COMMENT{}
   family $(\lambda_i)_{i\in I}$ of coefficients $\lambda_i\in F$ we have $\sum_{i\in I}\lambda_i\varphi_i=0\in F^A$ only when $\lambda_i=0$ for all $i\in I$.%
   \COMMENT{}

Unlike with linear independence, the thinly independent subfamilies of a given family of $A\to F$ functions do not always form a matroid.%
   \footnote{View the elements of $E=\mathbb F_2^\mathbb N$ as subsets of $\mathbb N$,
and define sets $I:=\{\{1,n\}:n\in\mathbb N\}$ and $I':=\{\{n\}:n\in\mathbb N\}$.
Both $I$ and $I'$ are thinly independent. Moreover, $I'$ is maximally thinly independent
but $I$ is not: $I+\mathbb N$, for instance, is still thinly independent. 
Yet, the only $x\in I'$ for which $I+x$ is thinly independent is $x=\{1\}$,
which is already contained in $I$. Thus, (I3) is violated.}
   But they do if the given family of functions is itself thin:

\begin{theorem} {\rm \cite{BruhnDiestelMatroidsGraphs}}\label{thinsumsmatroid}
If a family $E$ of $A\to F$ functions is thin, then its thinly independent subfamilies are
the independent sets of a matroid on~$E$.%
   \COMMENT{} 
\end{theorem}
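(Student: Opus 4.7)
Set $\I := \{\,I\sub E : I\text{ is thinly independent}\,\}$; we verify the independence axioms (I1)--(IM). Axioms (I1) and (I2) hold immediately: the empty family has no nontrivial thin subfamily, and any nontrivial thin linear relation inside a subfamily $J\sub I$ gives one in~$I$ itself, so thin independence is closed under taking subsets.

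For (I3), we argue by contradiction. Take $I\in\I\sm\Imax$, $I'\in\Imax$, and assume $I+\varphi\notin\I$ for every $\varphi\in I'\sm I$. Since $I$ is thinly independent, any thin relation in $I+\varphi$ must use~$\varphi$ with nonzero coefficient, so each such~$\varphi$ lies in the thin span $\langle I\rangle := \{\,\sum_{\sigma\in I}c_\sigma\sigma : (c_\sigma)\in F^I\,\}$, giving $I'\sub\langle I\rangle$. The key technical step is to show $\langle I'\rangle\sub\langle I\rangle$: for a thin sum $\psi = \sum_\tau\mu_\tau\tau$ with $\tau\in I'$, substituting the unique representations $\tau = \sum_\sigma c_{\tau,\sigma}\sigma$ (unique because~$I$ is thinly independent) and exploiting the thinness of~$E$---at each $a\in A$ only finitely many $\tau\in I'$ and finitely many $\sigma\in I$ contribute, so the pointwise identity $\psi(a) = \sum_{\sigma\in S_a}d_\sigma\sigma(a)$ becomes a finite constraint---one produces a well-defined family $(d_\sigma)\in F^I$ with $\psi = \sum_\sigma d_\sigma\sigma$, so $\psi\in\langle I\rangle$. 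Granted this, the maximality of~$I'$ yields $E\sub I'\cup\langle I'\rangle\sub\langle I\rangle$, contradicting the assumption that some $\psi\in E$ satisfies $I+\psi\in\I$ and hence $\psi\notin\langle I\rangle$.

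The subtlest axiom is (IM), and this is where the main work lies. Zorn's lemma cannot be applied directly to the poset of thinly independent subfamilies of~$X$ containing~$I$, since chain unions can fail to be thinly independent: in $E := \{\,e_n:n\ge1\,\}\cup\{f\}$ with $f := \sum_n e_n$, the chain $\{f,e_1\}\sub\{f,e_1,e_2\}\sub\cdots$ of thinly independent subfamilies has union $\{f\}\cup\{\,e_n:n\ge1\,\}$, which admits the nontrivial thin relation $f - \sum_n e_n = 0$. Instead, we work inside the vector space~$F^X$: the thin-sum evaluation $T_X\colon F^X\to F^A$, $(c_x)\mapsto\sum_x c_x x$, has a kernel~$K$ such that $J\sub X$ is thinly independent iff $F^J\cap K = 0$, and is maximal among thinly independent extensions of~$I$ inside~$X$ iff additionally every standard basis vector $\delta_x\in F^X$ (for $x\in X\sm J$) lies in $F^J+K$. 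The required maximal~$J$ with $I\sub J\sub X$ is then obtained by applying Zorn's lemma not to~$J$ alone but to a refined poset of pairs $(J,W)$, where $W$ is a coherent family of kernel-element witnesses certifying that each $x\in X\sm J$ is eliminable using~$J$; the witnesses provide the bookkeeping needed to prune chain unions to thinly independent extensions of~$I$ while preserving $I$. The main obstacle is verifying that this refined poset really satisfies Zorn's hypothesis---chains admit upper bounds---which is where the thinness of~$E$ enters essentially.
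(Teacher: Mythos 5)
First, a caveat: the paper does not prove this theorem itself---it is quoted from the reference \cite{BruhnDiestelMatroidsGraphs}---so there is no in-paper proof to compare against. Judged on its own terms, your proposal correctly disposes of (I1) and (I2), correctly identifies the two genuinely hard points ((I3) via span-transitivity, and (IM) via the failure of naive Zorn, for which your chain $\{f,e_1\}\subseteq\{f,e_1,e_2\}\subseteq\cdots$ with $f=\sum_n e_n$ is a good illustration), but it does not actually close either of them.

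In (I3), the step $\langle I'\rangle\subseteq\langle I\rangle$ is where the content lies, and your justification does not go through as written. Substituting $\tau=\sum_{\sigma}c_{\tau,\sigma}\sigma$ into $\psi=\sum_\tau\mu_\tau\tau$ requires the candidate coefficient $d_\sigma=\sum_\tau\mu_\tau c_{\tau,\sigma}$ to be a \emph{finite} sum in $F$ for each fixed $\sigma$; thinness of $E$ controls the supports of the functions, not the supports of the representation coefficients $c_{\tau,\sigma}$, so this finiteness is not automatic. The ``pointwise finite constraint'' at a point $a$ only involves the finitely many $\tau$ with $\tau(a)\neq 0$, and the resulting local coefficient $\sum_{\tau:\tau(a)\neq0}\mu_\tau c_{\tau,\sigma}$ depends a priori on $a$; nothing in your argument shows these local values cohere into a single family $(d_\sigma)\in F^I$ with $\psi=\sum_\sigma d_\sigma\sigma$. (That thin spans need not behave like linear spans is exactly the moral of the paper's own footnoted counterexample over $\mathbb F_2^{\mathbb N}$, where the family is not thin; showing that thinness of $E$ rescues transitivity of the span is a real lemma, not a rearrangement.) For (IM) the situation is starker: you describe a poset of pairs $(J,W)$ with ``coherent witness families'' and then explicitly defer the one thing that matters---that chains in this poset have upper bounds---calling it ``the main obstacle.'' Naming the obstacle is not overcoming it; you have not defined coherence, not constructed the upper bound of a chain, and not shown that the limit object can be ``pruned'' back to a thinly independent set still containing $I$. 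As it stands the proposal is a correct map of where the difficulties are, together with complete arguments only for the easy axioms.
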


\noindent
Afzali and Bowler~\cite{AfzaliBowler12} have shown that the matroids arising as in Theorem~\ref{thinsumsmatroid} are precisely the duals of the matroids that are representable over~$F$ in the usual sense. 

\medbreak

Whenever the thinly independent subfamilies of a family of $A\to F$ functions form a matroid, we call this the \emph{thin-sums matroid} of these functions. We say that a matroid can be \emph{thinly represented} over $F$ if it is isomorphic (in the obvious sense) to such a matroid. For finite matroids, thin representability over a given field is easily seen to coincide with ordinary representability over that field.%
   \COMMENT{}

Many standard infinite matroids, including all the variants of cycle and bond matroids of locally finite graphs~\cite{AfzaliBowler12} (see Sections~\ref{SecCycleBond}--\ref{algebraic})%
   \COMMENT{}
    or of higher-dimensional complexes (Section~\ref{complexes}), are thinly representable. Every matroid that is representable over a field~$F$ in the usual sense is also thinly representable over~$F$~\cite{AfzaliBowler12}, which is not obvious. So thin representability generalizes ordinary representability.%
   \COMMENT{}
   Conversely, every finitary matroid that is thinly representable over~$F$ is also representable over~$F$ (which is easy).%
   \COMMENT{}
   Hence for all finitary matroids, not just for the finite ones, thin representability coincides with ordinary representability, but for infinite (finitary) matroids this coincidence is not the canonical one as for finite matroids.%
   \COMMENT{}

The class of thinly representable matroids is not closed under duality~\cite{BC12:wildmatroids}. However, there is an important subclass that is: the class of `tame' thinly representable matroids (which is also closed under taking minors)~\cite{AfzaliBowler12}.%
   \COMMENT{}
   A matroid, thinly representable or not, is {\em tame\/} if every circuit meets every cocircuit only finitely. Tame matroids are substantially easier to handle than arbitrary matroids, and have more pleasant properties~\cite{BC13:Determinacy}. Forbidden minor characterizations extend readily from finite to tame matroids~\cite{BC12:excluded_matroid_minors}; for example, a tame matroid is thinly representable over $\F_2$ if and only if it does not have $U_{2,4}$ as a minor. The class of tame matroids is closed under taking minors%
   \COMMENT{}
   (as well as, by definition, under duality), so the tame matroids also solve Rado's original problem. See Bowler and Carmesin~\cite{BC12:excluded_matroid_minors, BC13:Determinacy, BC12:wildmatroids, BC13:Ubiquity} for more.

Generalizing matroid representations over fields and over a finite ground set to representations over fuzzy fields and infinite ground sets, Dress defined {\em matroids with coefficients} \cite{Dress86}. There is a fuzzy field over which all finite matroids are representable~\cite{Dress-WenzelGrassmann-Pluecker}.%
  \COMMENT{} 
  A matroid with coefficients, $D$~say, determines a closure operator~$cl_D$ that satisfies (CL1--4), but which need not satisfy~(CLM). In this setup, $D$~also has a {\em dual\/}~$D^*$, which by construction has the property that its circuits intersect those of $D$ finitely. We do not know whether this duality agrees with ours when $cl_D$ and~$cl_{D^*}$ do satisfy~(CLM) and hence define matroids in our sense, i.e., whether then $(E, cl_{D^*})=(E, cl_D)^*$. But there is an example of a matroid with coefficients, $D$~say, for which $cl_D$ satisfies (CLM) but $cl_{D^*}$ does not~\cite{AfzaliBowler12}. Results of Wagowski~\cite{Wag94} imply that, given a matroid $M$ in our sense, there is a matroid with coefficients $D$ such that $M=(E,cl_D)$ and $M^*=(E,cl_{D^*})$ if and only if $M$ is tame.

\subsection{The algebraic cycle matroid of a complex}\label{complexes}

Before we turn to more general complexes, let us show that the algebraic cycle matroid of a graph $G=(V,E)$ can be thinly represented  over~$\mathbb F_2$, for any $G$ for which it is defined (cf.\ Theorem~\ref{HiggsBean}). We represent an edge $e=uv$ by the map $V\to\mathbb F_2$ assigning 1 to both $u$ and~$v$, and 0 to every other vertex. Then a set $F\sub E$ of edges becomes a family $(\varphi_f)_{f\in F}$ of $V\to\mathbb F_2$ functions, not necessarily thin, which is thinly independent if and only if $F$ contains no elementary algebraic cycle.%
   \COMMENT{}

The above example generalizes to higher dimensions. Let $K$ be a locally finite simplicial complex. Let us show that, for each $n\in\N$, the $n$-dimensional cycles in $K$ define a matroid $M_n(K)$ on the set $\Delta_n(K)$ of its $n$-simplices, which is thinly representable over~$\mathbb F_2$.

Formally, we define this matroid as a thin-sums matroid over~$\mathbb F_2$, representing each simplex $\sigma\in\Delta_n(K)$ by its boundary~$\partial\sigma$: this is an $(n-1)$-chain with coefficients in~$\mathbb F_2$, which we think of as a function $\varphi_\sigma\colon \Delta_{n-1}(K)\to\mathbb F_2$.%
   \footnote{In the notation of Section~\ref{thin}, we have $A = \Delta_{n-1}(K)$ and index sets $I\sub\Delta_n(K)$.}
   Thus, formally, our ground set~$E$ is not $\Delta_n(K)$ itself (the intended reading) but the family $(\varphi_\sigma)_{\sigma\in\Delta_n(K)}$.%
   \COMMENT{}
   Since $K$ is locally finite, the sets $F\sub E$ are thin families of such functions. Such a family $F=(\varphi_\sigma)_{\sigma\in\Sigma}$ is thinly independent if and only if it contains no non-trivial $n$-cycle, that is, has no non-empty subfamily $F' = (\varphi_\sigma)_{\sigma\in\Sigma'}$ such that $\partial\psi = 0$ for the corresponding $n$-chain $\psi := \sum_{\sigma\in\Sigma'}\sigma$.

Theorem~\ref{thinsumsmatroid} thus has the following application:

\begin{theorem}{\rm \cite{BruhnDiestelMatroidsGraphs}}
The minimal non-zero $n$-dimensional cycles of a locally finite simplicial complex form the circuits of a matroid.
\end{theorem}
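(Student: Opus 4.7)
The plan is to apply Theorem~\ref{thinsumsmatroid} directly to the family $E=(\varphi_\sigma)_{\sigma\in\Delta_n(K)}$ of functions $\Delta_{n-1}(K)\to\mathbb F_2$ described just before the statement. First I would verify that $E$ itself is a thin family: for any fixed $(n-1)$-simplex $\tau\in\Delta_{n-1}(K)$, one has $\varphi_\sigma(\tau)\neq 0$ precisely when $\tau$ lies in the boundary of $\sigma$, and since $K$ is locally finite, only finitely many $n$-simplices $\sigma$ have $\tau$ as a face. Thus $E$ is thin, and Theorem~\ref{thinsumsmatroid} provides a matroid $M_n(K)$ on~$E$ whose independent sets are the thinly independent subfamilies.

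Next I would unpack what thin independence means in this setting: a subfamily $(\varphi_\sigma)_{\sigma\in\Sigma}$ fails to be thinly independent exactly when there is a nonempty $\Sigma'\subseteq\Sigma$ with $\sum_{\sigma\in\Sigma'}\varphi_\sigma=0\in \mathbb F_2^{\Delta_{n-1}(K)}$, i.e., $\partial\psi=0$ for $\psi:=\sum_{\sigma\in\Sigma'}\sigma$. In other words, the dependent subsets of $E$ are precisely those $\Sigma\subseteq\Delta_n(K)$ that contain some nontrivial $n$-cycle, matching the discussion immediately preceding the theorem.

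Finally I would identify the circuits of $M_n(K)$ with the minimal $n$-dimensional cycles in~$K$. If $C$ is a minimal dependent set, then $C$ contains a nontrivial $n$-cycle $\Sigma'\subseteq C$; by minimality $\Sigma'=C$, so $C$ is itself an $n$-cycle, and any proper subset of $C$ is independent, hence $C$ is a minimal $n$-cycle. Conversely, any minimal $n$-cycle $C$ is dependent (it contains itself), while every proper subset of $C$ contains no nontrivial $n$-cycle (by minimality of $C$) and is therefore independent, so $C$ is a minimal dependent set, i.e., a circuit.

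The substantive work has already been done in Theorem~\ref{thinsumsmatroid}; the only obstacle here is bookkeeping, namely deducing thinness of the ambient family $E$ from local finiteness of $K$ and checking that minimal dependent sets coincide with minimal $n$-cycles.
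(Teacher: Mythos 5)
Your proposal is correct and takes essentially the same route as the paper: there, too, the theorem is obtained as a direct application of Theorem~\ref{thinsumsmatroid}, with local finiteness of $K$ guaranteeing that the family $(\varphi_\sigma)_{\sigma\in\Delta_n(K)}$ is thin and with thin independence identified with the absence of non-trivial $n$-cycles. Your final step matching minimal dependent sets to minimal $n$-cycles merely spells out what the paper leaves implicit, and it is carried out correctly.
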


\noindent
Let us call this matroid the {\em $n$-dimensional cycle matroid\/} of the complex~$K$, and denote it by~$M_n(K)$. In general, this is a non-finitary matroid, but by the result of Afzali and Bowler~\cite{AfzaliBowler12} mentioned after Theorem~\ref{thinsumsmatroid} it is always cofinitary.%
   \COMMENT{}

\medbreak

We remark that even for $n=1$ it was not entirely trivial to verify~(CM) for this matroid. For $n>1$ we know of no direct proof. The other essential axioms, such as (C3), (I3) or~(B2), also appear to be hard to verify directly when the complex is infinite.%
   \COMMENT{}

\section{Basic properties}\label{basics}

In this section we prove just enough about infinite matroids $(E,\I)$ to enable us in Section~\ref{eq} to deduce that the various axiom systems given in Section~\ref{axioms} are indeed equivalent. On the way we define duality, deletions and contractions, and show that they behave as for finite matroids. More properties of infinite matroids, especially regarding connectivity, are proved in~\cite{BruhnWollanConInfMatroids}.

Let $M=(E,\I)$ be a fixed matroid, that is, assume throughout this section that $\I$ satisfies 
the independence axioms. Write $\B:=\Imax$ for its set of bases. We start with an observation that can be directly read off the axioms:%
   \COMMENT{}

\begin{itemize}\COMMENT{}
\item[\rm (I3$'$)] For all $I\in\I$ and $I'\in\B$ there is a $B\in\B$ such that $I\sub B\sub I\cup I'$.
\end{itemize}
Indeed, by (IM) there is a maximal independent subset $B$ of ${I\cup I'}$ such that $I\sub B$. Then $B\in\B$, as otherwise we could use (I3) to extend $B$ further into~$I'$ (keeping it independent), contrary to its definition.

\medbreak

Let us establish duality. Define
 $$\B^* := \{\,B^*\sub E\mid E\sm B^*\in\B\,\}\quad \rlap{$\big(= \{\,\Bbar\mid B\in\B\,\}\big)$}$$
and $\I^* := \dcl{\B^*}$.

\begin{theorem}\label{duality}
If $\I$ satisfies the independence axioms, then so does~$\I^*$, with $\B^*$ as its set of bases.
\end{theorem}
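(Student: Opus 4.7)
The plan is to verify, for $\I^*$, axioms (I1), (I2), (IM), the identification $\B^*=(\I^*)^{\max}$, and (I3), in that order. Along the way the main work will be in (IM) and in the maximality part of its proof.

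Axioms (I1) and (I2) are short. Applying (IM) for $\I$ to $I=\es\sub X=E$ produces a basis of $M$, so $\B^*\ne\es$ and hence $\es\in\dcl{\B^*}=\I^*$; and (I2) is immediate from $\I^*=\dcl{\B^*}$.

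For (IM) for $\I^*$, given $I^*\in\I^*$ and $I^*\sub X\sub E$, I will construct the desired maximal extension inside~$X$ by two successive applications of (IM) for $\I$. First choose $J\in\I$ maximal with $J\sub\overline X$; then, noting that $J\sub\overline X\sub E\sm I^*$, extend $J$ to a set $B_1\in\I$ maximal with $J\sub B_1\sub E\sm I^*$. Since $I^*\in\I^*$ there is a basis $B_0\in\B$ disjoint from~$I^*$, and were $B_1\notin\B$, (I3) applied to $B_1$ and~$B_0$ would yield $x\in B_0\sm B_1\sub E\sm I^*$ with $B_1+x\in\I$, contradicting the maximality of~$B_1$ inside~$E\sm I^*$; hence $B_1\in\B$. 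Then $I'^*:=X\sm B_1$ satisfies $I^*\sub I'^*\sub X$ and $I'^*\sub\overline{B_1}\in\B^*$, so $I'^*\in\I^*$. The step I expect to be the main obstacle is showing that $I'^*$ is maximal in $\{I\in\I^*\mid I^*\sub I\sub X\}$. Suppose $I'^*\subne I''^*\sub X$ with $I''^*\in\I^*$, witnessed by $B_2\in\B$ with $B_2\sub E\sm I''^*$. An element $x_0\in I''^*\sm I'^*$ lies in $B_1\cap X$; applying (I3) to $B_1-x_0\in\I\sm\B$ and $B_2$ gives $y\in B_2\sm(B_1-x_0)$ with $(B_1-x_0)+y\in\I$. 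Since $x_0\in I''^*$ but $B_2\cap I''^*=\es$, $y\ne x_0$, so $y\notin B_1$. Now split on the location of~$y$. If $y\in\overline X$, then $B_1\cap\overline X=J$ (the $\supe$-inclusion is by construction, the $\sub$-inclusion by maximality of~$J$), whence $J+y\in\I$ with $y\in\overline X\sm J$, contradicting maximality of~$J$. If $y\in X$, then $y\in B_2\sub E\sm I''^*\sub E\sm I'^*=\overline X\cup B_1$ together with $y\in X$ forces $y\in B_1$, contradicting $y\notin B_1$.

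With (IM) for $\I^*$ in hand, $\B^*=(\I^*)^{\max}$ follows quickly: distinct elements of $\B^*$ cannot be nested (otherwise their complements would be nested elements of $\B=\Imax$), giving $\B^*\sub(\I^*)^{\max}$; conversely every $I^*\in(\I^*)^{\max}$ is contained in some $B^*\in\B^*\sub\I^*$ and must therefore equal it. Finally, for (I3), let $I^*\in\I^*\sm\B^*$ and $I'^*\in\B^*$, and set $B:=\overline{I'^*}\in\B$. Apply (IM) for $\I$ to extend $B\sm I^*\in\I$ within $E\sm I^*$ to a maximal $B'\in\I\cap 2^{E\sm I^*}$; as in the (IM) argument, $B'\in\B$. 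Set $C:=\overline{B'}\in\B^*$; then $I^*\sub C$, and since $I^*\notin\B^*$ this is proper, so pick $x\in C\sm I^*$. From $B'\supe B\sm I^*$ we get $C=\overline{B'}\sub\overline{B\sm I^*}=\overline B\cup I^*$, whence $x\in\overline B=I'^*$. As $I^*+x\sub C\in\B^*$, we conclude $I^*+x\in\I^*$, verifying (I3).
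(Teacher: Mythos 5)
Your proposal is correct and follows essentially the same route as the paper's proof: establish (I1) and (I2) from $\B^*\ne\es$, identify $\B^*=\I^{*\rm max}$ via the antichain property, prove (I3) by extending $\overline{I'^*}\sm I^*$ to a basis inside $E\sm I^*$ and complementing, and prove (IM) by anchoring on a maximal independent subset of $\Xbar$, extending it to a basis $B_1\sub E\sm I^*$, and taking $X\sm B_1$ as the witness. The only cosmetic differences are that you inline the derivation of the paper's auxiliary statement (I3$'$) (via (IM) plus (I3) each time you need a basis in $E\sm I^*$) and that your maximality contradiction in (IM) proceeds by a single-element exchange at $x_0$ with a two-case split on the location of $y$, rather than the paper's construction of the intermediate set $(B''\cap X)\cup(B'\sm X)$; both versions bottom out in the same contradiction with the maximality of the chosen independent subset of~$\Xbar$.
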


\proof
Since $\I$ satisfies (I1) and~(IM), we have $\B^*\ne\es$ and hence (I2) and (I1) for~$\I^*$. Since $\B$ and hence also $\B^*$ is an antichain, we have $\I^{*\rm max} = \B^*$. To prove (I3) for~$\I^*$, let $I^*\in\,\I^*\sm \B^*$ be given, with $I^*\cap B = \es$ for $B\in\B$ say,%
   \COMMENT{}
   and let also $B'\in\B$ be given; our aim is to extend $I^*$ non-trivially into~$\overline{B'}$ while keeping it in~$\I^*$.%
   \COMMENT{}

We first use (I3$'$) to extend the independent set $B'\sm I^*$ into~$B$, to a subset $B''\in\B$ of $(B'\sm I^*)\cup B$. Then $I^*\sub \overline{B''}\in\B^*$, and the inclusion is proper since $I^*\notin\B^*$ by assumption. But
 $$\overline{B''}\sm I^* = \overline{B''\cup I^*} \sub \overline{B'\cup I^*} = \overline{B'}\sm I^*$$
 since $B'\cup I^*\sub B''\cup I^*$. So the extension $\overline{B''}$ of $I^*$ is as desired, completing our proof of (I3)---indeed of its strengthening~(I3$'$)---for~$\I^*$.

It remains to prove that $\I^*$ satisfies~(M). Let $X\sub E$ and $I^*\in \I^*\cap 2^X$ be given. By definition of~$\I^*$, there exists a set $B\in\B$ such that $I^*\cap B = \es$. By~(IM), ${\Xbar}$ has a maximal independent subset~$I$. By~(I3$'$), we can extend $I$ to a subset $B'\in\B$ of $I\cup B\sub \overline{I^*}$.%
   \COMMENT{}

We claim that $X\sm B'$ witnesses (M) for $I^*$ and~$\I^*$, i.e.\ that $X\sm B'$ is maximal among the subsets of $X$ that contain $I^*$ and avoid an element of~$\B$.%
   \COMMENT{}
   Suppose not. Then there is a set $B''\in\B$ such that $B''\cap X\subne B'\cap X$. Then
 $$I' := (B''\cap X)\cup (B'\sm X) \subne B',$$
 so $I'\in\I\sm\B$. We can thus use (I3) to extend $I'$ properly into~$B''$ to a larger independent set~$I''$. But then $I\sub I'\sm X\subne I''\sm X$, contradicting the choice of~$I$.%
   \looseness=-1
   \endproof

Given a matroid $M = (E,\I)$, we call the matroid $M^* := (E,\I^*)$ specified by Theorem~\ref{duality} the {\em dual\/} of~$M$. As usual, we call the bases, circuits, dependent and independent sets of $M^*$ the {\em cobases, cocircuits, codependent\/} and {\em coindependent\/} sets of~$M$.

\medbreak

Next, let us show that our matroids have restrictions defined in the usual way: that, given a set $X\sub E$, the pair $(X,\,\I\cap 2^X)$ is again a matroid. It will be convenient to use the following duality argument in our proof of this fact:

\begin{lemma}\label{cobases}
If $X\sub E$ and $B\in\B$, then $B\cap X$ is maximal in $\I\cap 2^X$ if and only if $\Bbar\cap\Xbar$ is maximal in $\I^*\cap 2^{\Xbar}$.
\end{lemma}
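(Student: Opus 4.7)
I would prove the equivalence by establishing the forward direction directly and then deducing the converse by passing to the dual matroid $M^*$; the latter is legitimate because the definition $\B^*=\{\Bbar : B\in\B\}$ immediately gives $\B^{**}=\B$ and hence $\I^{**}=\I$.

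For the forward direction I argue by contrapositive. Suppose $\Bbar\cap\Xbar$ is not maximal in $\I^*\cap 2^{\Xbar}$, so there is a coindependent set $J^*\sub\Xbar$ strictly containing $\Bbar\cap\Xbar$. Choose a cobase $B^*\supe J^*$ and put $B':=\overline{B^*}\in\B$. Since $J^*\sub\Xbar\sm B'$, a short set-theoretic calculation inside $\Xbar$ translates the strict inclusion $\Bbar\cap\Xbar\subsetneq J^*$ into $B'\cap\Xbar\subsetneq B\cap\Xbar$: an element $x$ witnessing the strictness lies in $\Xbar$, in $B$ (since $x\notin\Bbar$), and outside $B'$ (since $x\in J^*$).

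I then apply (I3$'$), the strengthening of (I3) derived in the paragraph just before Theorem~\ref{duality}, to the independent set $B\cap X$ and the base $B'$: this produces a base $B''\in\B$ with $B\cap X\sub B''\sub (B\cap X)\cup B'$. The chain $B''\cap\Xbar\sub B'\cap\Xbar\subsetneq B\cap\Xbar$ is immediate from the upper bound on $B''$. Because $\B$ is an antichain, $B''\subsetneq B$ is impossible; combined with $B\cap X\sub B''\cap X$ and the strict loss on $\Xbar$, the only option is $B''\cap X\supsetneq B\cap X$, which exhibits an independent subset of $X$ properly containing $B\cap X$ and thus contradicts its maximality.

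For the converse direction I apply the same argument inside $M^*$, with the cobase $\Bbar$ and the set $\Xbar$ taking the roles of $B$ and $X$; invoking $M^{**}=M$, this gives that $B\cap X$ is maximal in $\I\cap 2^X$ whenever $\Bbar\cap\Xbar$ is maximal in $\I^*\cap 2^{\Xbar}$. The main obstacle I expect is the set-theoretic bookkeeping that converts the cobase-side strict containment into the base-side one; once that is in hand, the antichain property of $\B$ together with (I3$'$) forces the surplus element to land in $X$ automatically.
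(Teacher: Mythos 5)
Your proof is correct and takes essentially the same route as the paper's: the forward direction extends the independent set $B\cap X$ into the base $B'$ via (I3$'$) and uses the antichain property of $\B$ to force the surplus element into $X$, and the converse is obtained by taking complements (i.e.\ working in $M^*$ and using $M^{**}=M$). The only difference is that you spell out the set-theoretic bookkeeping that the paper compresses into the single line ``there exists some $B'\in\B$ such that $B'\sm X\subne B\sm X$''.
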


\proof
Suppose first that $B\cap X$ is maximal in $\I\cap 2^X$. If $\Bbar\cap\Xbar$ is not maximal in $\I^*\cap 2^{\Xbar}$, there exists some $B'\in\B$ such that $B'\sm X\subne B\sm X$. Use (I3$'$) to extend $I:= B\cap X$%
   \COMMENT{}
   to a subset $I'\in\B$ of $I\cup B'$. Then $I'\cap X\supsetneq B\cap X$, since $I'$ is not a proper subset of~$B$. This contradicts our initial assumption about~$B$.

The converse implication follows by taking complements.
\endproof

\begin{lemma}\label{restr}
For every set $X\sub E$, the set $\I\cap 2^X$ satisfies~{\rm (I3$'$)}.
\end{lemma}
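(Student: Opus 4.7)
The plan is to reduce the statement to an application of (I3$'$) in the ambient matroid $M$, where we have already derived it from (I3) and~(IM) just before Theorem~\ref{duality}. The subtlety is that given $I\in\I\cap 2^X$ and $I'$ maximal in $\I\cap 2^X$, the set $I'$ need not be a basis of~$M$; so we first extend $I'$ to a basis of~$M$, apply (I3$'$) globally, and then intersect back with~$X$. The main difficulty is not constructing a candidate $B\sub X$ with $I\sub B\sub I\cup I'$, but verifying that this $B$ is in fact maximal in $\I\cap 2^X$. For this we exploit Lemma~\ref{cobases}, which is precisely the duality tool relating maximality of $B\cap X$ in $\I\cap 2^X$ to maximality of $\Bbar\cap\Xbar$ in $\I^*\cap 2^{\Xbar}$.

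Concretely, first invoke (IM) with $I'$ and $X$ replaced by~$E$ to extend $I'$ to a basis $B^*\in\B$. Because $I'$ is maximal in $\I\cap 2^X$ and $I'\sub B^*\cap X\in\I\cap 2^X$, we get $B^*\cap X = I'$. Next apply (I3$'$) in $M$ to $I\in\I$ and the basis~$B^*$ to obtain a basis $B'\in\B$ with $I\sub B'\sub I\cup B^*$. Set $B:=B'\cap X$. Since $I\sub X$, we have $I\sub B$ and
\[
B=B'\cap X\sub (I\cup B^*)\cap X = I\cup(B^*\cap X) = I\cup I',
\]
and $B\in\I\cap 2^X$ because $B'\in\I$.

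It remains to check that $B$ is maximal in $\I\cap 2^X$. The one-line argument is: apply Lemma~\ref{cobases} to $B^*$ to translate the maximality of $B^*\cap X = I'$ in $\I\cap 2^X$ into the maximality of $\overline{B^*}\cap\Xbar$ in $\I^*\cap 2^{\Xbar}$, then show that $\overline{B'}\cap\Xbar$ contains (hence equals) $\overline{B^*}\cap\Xbar$, and finally apply Lemma~\ref{cobases} to $B'$ in the reverse direction. The containment follows from $B'\sub I\cup B^*\sub X\cup B^*$, which gives
\[
\overline{B'}\cap\Xbar \supe \overline{X\cup B^*}\cap\Xbar = \overline{B^*}\cap\Xbar.
\]
Since both sides lie in $\I^*\cap 2^{\Xbar}$ and the right-hand side is already maximal there, equality holds, so $\overline{B'}\cap\Xbar$ is maximal in $\I^*\cap 2^{\Xbar}$. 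By Lemma~\ref{cobases} this gives that $B=B'\cap X$ is maximal in $\I\cap 2^X$, completing the verification of (I3$'$) for $\I\cap 2^X$.

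The expected main obstacle is exactly the maximality step: without the duality provided by Lemma~\ref{cobases}, one would be tempted to extend a hypothetical enlargement of $B$ inside $X$ via (I3) applied in $M$, but $B$ itself is not a basis of~$M$, so a direct appeal to (I3) does not give the needed contradiction. Routing the maximality argument through the cobases in $\Xbar$ circumvents this.
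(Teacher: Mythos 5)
Your proof is correct and follows essentially the same route as the paper's: extend $I'$ to a basis of $M$ via (IM), apply the global (I3$'$) to $I$ and that basis, and certify maximality of the resulting intersection with $X$ by passing through Lemma~\ref{cobases} twice. Only the variable names differ.
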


\proof
Let an independent subset $I$ of $X$ and a maximal independent subset $I'$ of~$X$ be given. Using~(IM) in~$E$,%
   \COMMENT{}
   extend $I'$ to a set $B'\in\B$. Note that $I' = B'\cap X$, by the maximality of~$I'$. By Lemma~\ref{cobases},
$$\text{$\overline{B'}\cap\Xbar$ is maximal in $\I^*\cap 2^{\Xbar}$.}\eqno(*)$$
Use (I3$'$) to extend $I$ into~$B'$, to a subset $B\in\B$ of $I\cup B'$. Then $B\cap\Xbar\sub B'\cap\Xbar$ and hence $\overline{B}\cap\Xbar \supe \overline{B'}\cap\Xbar$. Thus by~$(*)$, the set $\overline{B}\cap\Xbar$ is maximal in $\I^*\cap 2^{\Xbar}$. Applying Lemma~\ref{cobases} backwards, we deduce that $B\cap X$ is maximal in $\I\cap 2^X$. Since
 $$I\sub B\cap X\sub (I\cup B')\cap X = (I\cap X)\cup (B'\cap X) = I\cup I'$$%
 (recall that $I' = B'\cap X$), this completes the proof.
\endproof

\begin{theorem}\label{restrictions}
For every set $X\sub E$, the pair $(X,\,\I\cap 2^X)$ is a matroid.
\end{theorem}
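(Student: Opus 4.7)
The plan is to verify each of the four independence axioms (I1), (I2), (I3), (IM) for the set system $\I\cap 2^X$ regarded as a subset of $2^X$. Three of these will be essentially inherited from the corresponding axioms for $\I$, and the one nontrivial step (I3) will be obtained from Lemma~\ref{restr}.

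First I would note that (I1) holds because $\es\in\I$ by (I1) for $\I$, and $\es\sub X$; that (I2) holds because if $J\sub I$ with $I\in\I\cap 2^X$, then $J\in\I$ by (I2) for $\I$, and $J\sub I\sub X$; and that (IM) is immediate, since for $I\in\I\cap 2^X$ and $Y\sub X$ with $I\sub Y$, the set whose maximal element is sought, $\{I'\in\I\cap 2^X\mid I\sub I'\sub Y\}$, coincides with $\{I'\in\I\mid I\sub I'\sub Y\}$ (because $Y\sub X$ makes the condition $I'\sub X$ automatic), and the latter has a maximal element by (IM) for $\I$.

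The only axiom requiring a little argument is (I3). Suppose $I\in(\I\cap 2^X)\sm(\I\cap 2^X)^{\max}$ and $I'\in(\I\cap 2^X)^{\max}$. By Lemma~\ref{restr}, $\I\cap 2^X$ satisfies (I3$'$), so there is a set $B$, maximal in $\I\cap 2^X$, with $I\sub B\sub I\cup I'$. Since $I$ itself is not maximal in $\I\cap 2^X$ but $B$ is, the inclusion $I\sub B$ is proper; pick any $x\in B\sm I$. Then $x\in I\cup I'$ and $x\notin I$, so $x\in I'\sm I$; moreover $I+x\sub B\in\I$, so $I+x\in\I$ by (I2), and $I+x\sub X$, giving $I+x\in\I\cap 2^X$, as required for (I3).

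I do not expect any real obstacle here: the whole weight of the theorem has already been absorbed into Lemma~\ref{restr}, whose proof used the duality trick via Lemma~\ref{cobases}. The closest thing to a subtlety is checking (IM), where one must remember that (M) for $\I\cap 2^X$ is relative to the ground set $X$ rather than $E$; but this is exactly the situation addressed by the remark in Section~\ref{independenceaxioms} that (M) is insensitive to the choice of ambient set as long as it contains $\bigcup\I$, and in any case it is handled directly by the observation that the relevant indexing families coincide.
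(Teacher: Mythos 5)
Your proof is correct and follows the paper's own route exactly: (I1), (I2) and (IM) are inherited directly from $\I$, and (I3) is deduced from the statement (I3$'$) for $\I\cap 2^X$ supplied by Lemma~\ref{restr}. The extra detail you give for extracting (I3) from (I3$'$) and for checking that (M) relativizes correctly to the ground set $X$ is sound and merely makes explicit what the paper leaves implicit.
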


\proof
Axioms (I1), (I2) and (IM) hold for the sets in $\I\cap 2^X$ because they hold for~$\I$. Axiom (I3) for $\I\cap 2^X$ follows from Lemma~\ref{restr}.
\endproof

Given a matroid $M = (E,\I)$ and $X\sub E$, we denote the matroid $(X,\I\cap 2^X)$ as~$M|X$ or as~$M-\Xbar$, and call it the {\em restriction\/} of $M$ {\em to~$X$}, or the {\em minor\/} of $M$ obtained by {\em deleting~$\Xbar$}. Following Oxley~\cite{OxleyBook}, we call
 $$M.X := M/\Xbar := (M^*|X)^*$$
 the {\em contraction\/} of $M$ {\em to~$X$}, or the {\em minor\/} of $M$ obtained by {\em contracting~$\Xbar$}.

\begin{lemma}\label{bases}
The following statements are equivalent for all sets $I\sub X\sub E$:
\begin{enumerate}[\rm (i)]\itemsep=0pt
\item $I$ is a base of $M.X$.\label{basesi}
\item There exists a base $I'$ of $M-X$ such that $I\cup I'\in\B$.\label{basesii}
\item $I\cup I''\in\B$ for every base $I''$ of~$M-X$.\label{basesiii}
\end{enumerate}
\end{lemma}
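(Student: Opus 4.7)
The plan is to reduce Lemma~\ref{bases} to a single bijective correspondence obtained by combining the definition $M.X=(M^*|X)^*$ with Lemma~\ref{cobases}. Unpacking ``$I$ is a basis of $M.X$'' as ``$X\sm I$ is maximal in $\I^*\cap 2^X$'', and noting that $\B^{**}=\B$ (by two applications of Theorem~\ref{duality}) and hence $(\I^*)^*=\I$, I would apply Lemma~\ref{cobases} inside the matroid $M^*$ with set $X$ to obtain, for every $B\in\B$,
\[
 B\cap X \text{ is a basis of } M.X \iff B\cap\Xbar \text{ is a basis of } M-X.
\]
Call this equivalence $(\ast)$: it is the workhorse of the whole argument.

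From $(\ast)$, the equivalence (\ref{basesi})$\Leftrightarrow$(\ref{basesii}) drops out quickly. For (\ref{basesii})$\Rightarrow$(\ref{basesi}), take $B:=I\cup I'$ and read $(\ast)$ from right to left. For the converse, I would pick any $B^*\in\B^*$ containing the coindependent set $X\sm I$; maximality of $X\sm I$ in $\I^*\cap 2^X$ forces $B^*\cap X=X\sm I$, so $B:=\overline{B^*}\in\B$ satisfies $B\cap X=I$, and then $(\ast)$ supplies a basis $I':=B\cap\Xbar$ of $M-X$ with $I\cup I'=B\in\B$. The implication (\ref{basesiii})$\Rightarrow$(\ref{basesii}) is trivial, since $M-X$ always admits a basis (apply (IM) to $\es$ with restriction~$\Xbar$).

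The main work lies in (\ref{basesii})$\Rightarrow$(\ref{basesiii}). Given a second, arbitrary basis $I''$ of $M-X$, I would apply (I3$'$) to the independent set $I''$ and the basis $I\cup I'\in\B$ to produce $B\in\B$ with
\[
 I''\sub B\sub I''\cup(I\cup I').
\]
Then $B\cap\Xbar\supseteq I''$ and $B\cap\Xbar\in\I\cap 2^{\Xbar}$, so the maximality of $I''$ in that family pins down $B\cap\Xbar=I''$. By $(\ast)$, now read from right to left, $B\cap X$ is a basis of $M.X$. But also $B\sub I\cup I'\cup I''$ together with $I'\cup I''\sub\Xbar$ forces $B\cap X\sub I$, and since $B\cap X$ and $I$ are both bases of $M.X$ and bases form an antichain, they must coincide. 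Hence $B=(B\cap X)\cup(B\cap\Xbar)=I\cup I''\in\B$, as required.

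The main obstacle I expect is spotting that $(\ast)$ can be invoked a \emph{second} time to certify the newly produced $B\cap X$ as a basis of $M.X$, even though $B$ was chosen merely to extend $I''$; once this step is identified, the antichain property of the bases of $M.X$ closes the argument with no further appeal to extension or exchange axioms.
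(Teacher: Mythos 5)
Your proposal is correct and follows essentially the same route as the paper: the equivalence $(\ast)$ is exactly the paper's application of Lemma~\ref{cobases} (via the definition $M.X=(M^*|X)^*$) giving (i)$\Leftrightarrow$(ii), and your proof of (ii)$\Rightarrow$(iii) matches the paper's use of (I3$'$) to extend $I''$ into $I\cup I'$, with your second invocation of $(\ast)$ plus the antichain property playing the role of the paper's appeal to the minimality of $I$ among sets extendable to a basis by points of $\Xbar$ only.
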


\proof
(i)~means that $X\sm I$ is a base of $M^*|X$, a~maximal subset of $X$ extending to a base of~$M^*$. (Equivalently, $I$ is minimal with the property that we can extend it to a base of $M$ by adding points of $\Xbar$ only.) By Lemma~\ref{cobases},%
   \COMMENT{}
   this is equivalent to~(ii).

Since $M-X$ is a matroid (Theorem~\ref{restrictions}) it has a base, so (iii) implies~(ii). To prove the converse implication, assume (ii) and let $I''$ be a base of~$M-X$. Use (I3$'$) to extend $I''$ into $B':=I\cup I'$, i.e.\ to find a set $B''\in\B$ such that $I''\sub B''\sub I''\cup B'$. By the minimality of $I$ mentioned in the proof of (i)$\leftrightarrow$(ii), we have $B''\cap X\supe I$, and by the maximality of $I''$ as a base of $M-X$ we have $B''\sm X \sub I''$. In both cases we trivially also have the converse inclusion, so $B'' = I\cup I''$ as desired.
\endproof

\begin{corollary}\label{CoindependentSets}
A set $I\sub X$ is independent in $M.X$ if and only if $I\cup I'\in\I$ for every independent set $I'$ of~$M-X$.
\end{corollary}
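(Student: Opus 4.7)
The plan is to derive the corollary directly from Lemma~\ref{bases} together with the fact that every independent set extends to a maximal one via (IM).

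First I would spell out what independence in $M.X$ means: a set $I\sub X$ is independent in $M.X$ precisely when it is contained in some basis of $M.X$. This lets me attack both implications by toggling between ``some basis'' and the characterisation of bases of $M.X$ provided by Lemma~\ref{bases}.

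For the forward direction, suppose $I$ is independent in $M.X$, and pick a basis $J$ of $M.X$ with $I\sub J$. Given any independent set $I'$ of $M-X$, I apply (IM) inside the matroid $M-X$ (which is a matroid by Theorem~\ref{restrictions}) to extend $I'$ to a basis $I''$ of $M-X$. Lemma~\ref{bases}(iii) then gives $J\cup I''\in\B$, and since $I\cup I'\sub J\cup I''$, axiom (I2) delivers $I\cup I'\in\I$.

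For the backward direction, assume $I\cup I'\in\I$ for every independent set $I'$ of $M-X$. I pick any basis $I''$ of $M-X$ (which exists via (IM) applied to $\es$ inside $M-X$). By hypothesis $I\cup I''\in\I$, so (IM) in $M$ lets me extend $I\cup I''$ to a basis $B\in\B$. Setting $J:=B\cap X$, I have $I\sub J$, and the key observation is that $B\sm X = I''$: indeed $I''\sub B\sm X\sub\Xbar$, the set $B\sm X$ is independent as a subset of $B$, and $I''$ is a maximal independent subset of $\Xbar$, forcing equality. Hence $B = J\cup I''$, so by Lemma~\ref{bases}(ii)$\Rightarrow$(i) the set $J$ is a basis of $M.X$, and therefore $I$ is independent in $M.X$.

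The only mild obstacle is the identity $B\sm X = I''$ in the backward direction; it is not built into the setup but follows at once from maximality of~$I''$ in $M-X$. Everything else is a direct application of Lemma~\ref{bases} and (IM), so no further machinery should be needed.
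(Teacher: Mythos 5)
Your proof is correct and follows essentially the same route as the paper's: the forward direction via Lemma~\ref{bases} (i)$\to$(iii) together with (I2), and the backward direction by choosing a basis $I''$ of $M-X$, extending $I\cup I''$ to a basis $B$ of $M$ via (IM), noting $B\sm X=I''$ by maximality of $I''$, and invoking Lemma~\ref{bases} (ii)$\to$(i). The only difference is that you spell out a few routine steps (extending $I'$ to a basis of $M-X$ in the forward direction, and the maximality argument for $B\sm X=I''$) that the paper leaves implicit.
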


\proof
The forward implication follows easily from Lemma~\ref{bases} (i)$\to$(iii).%
   \COMMENT{}

For the backward implication, choose $I'$ as a base of~$M-X$. Use (IM) to extend $I\cup I'\in\I$ to a base $B\in\B$. Then $B\sm X = I'$ by the maximality of~$I'$, so $B\cap X\supe I$ is a base of $M.X$ by (ii)$\to$(i) of Lemma~\ref{bases}.
\endproof

Our next aim is to show the counterpart of (IM) for dependent sets: that inside every dependent set we can find a minimal one, a circuit. For the proof we need another lemma:

\begin{lemma}\label{card}
If bases $B,B'$ satisfy $|B\sm B'| < \infty$, then $|B\sm B'| = |B'\sm B|$.
\end{lemma}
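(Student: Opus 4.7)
The plan is a straightforward induction on $n := |B \sm B'|$, using (I3) both to do an exchange step and then, crucially, to verify that the exchanged set is still a basis.

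For the base case $n=0$ we have $B \sub B'$; since both $B$ and $B'$ lie in $\B = \Imax$, the inclusion forces $B = B'$, so $|B' \sm B| = 0$ as well.

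For the inductive step, fix $x \in B \sm B'$. Then $I := B - x$ is independent by (I2) and not maximal, because $I \subsetneq B \in \Imax$. Apply (I3) to $I$ and $B'$: there is some $y \in B' \sm (B - x)$ with $I + y \in \I$. Since $x \notin B'$, we have $y \ne x$, hence $y \in B' \sm B$. Set $B'' := (B - x) + y$.

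The main step is to show $B'' \in \B$, and this is the only real subtlety. Suppose instead that $B'' \in \I \sm \Imax$. Applying (I3) to $B''$ and the basis $B$ yields some $w \in B \sm B''$ with $B'' + w \in \I$. Since $B - x \sub B''$, the only candidate is $w = x$, so $B'' + x = B + y$ is independent; but $y \notin B$, so this contradicts $B \in \Imax$. Hence $B'' \in \B$.

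Finally I would bookkeep the symmetric differences. Since $y \in B'$ and $x \notin B'$:
\[
B'' \sm B' \;=\; (B - x) \sm B' \;=\; (B \sm B') - x,
\]
so $|B'' \sm B'| = n - 1 < \infty$. By the inductive hypothesis applied to $B''$ and $B'$, we get $|B' \sm B''| = n - 1$. On the other hand, using $x \notin B'$ again,
\[
B' \sm B'' \;=\; (B' \sm (B - x)) - y \;=\; (B' \sm B) - y,
\]
and since $y \in B' \sm B$ this gives $|B' \sm B| = |B' \sm B''| + 1 = n$, as required. The only place where care is needed is the verification that $B''$ is actually a basis; the rest is just set-difference arithmetic driven by the two induction-compatible facts $x \notin B'$ and $y \in B'$.
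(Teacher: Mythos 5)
Your proof is correct, and its overall strategy is the same as the paper's: a single-element exchange that shrinks the finite difference by one, driven by induction (which the paper phrases as a minimal counterexample). The mechanics differ in two respects worth recording. First, the paper picks $x\in B'\sm B$ and invokes the strengthened axiom (I3$'$), derived at the start of Section~\ref{basics} from (I3) and (IM), to extend $B'-x$ into $B$ to a set that is \emph{guaranteed} to be a basis; you instead pick $x\in B\sm B'$, use the bare axiom (I3), and must then separately verify that $(B-x)+y$ is maximal. Your verification --- a second application of (I3) showing the only possible augmentation of $(B-x)+y$ inside $B$ is by $x$, giving the impossible $B+y\in\I$ --- is essentially the argument the paper uses later to derive the basis exchange axiom (B2) in Theorem~\ref{Beq}\thinspace(i), so in effect you prove a special case of (B2) inline. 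Second, as a consequence, your argument uses only (I2), (I3) and the fact that $\Imax$ is an antichain, and never touches (IM), whereas the paper's route passes through (I3$'$) and hence through (IM). Both proofs are about the same length; the paper's buys brevity by reusing (I3$'$), yours gets by on a strictly smaller set of axioms.
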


\proof
Suppose not, and choose a counterexample $(B,B')$ with $|B\sm B'|$ minimum. Then $|B\sm B'| < |B'\sm B|$. Pick $x\in B\sm B'$,%
   \COMMENT{}
   and use (I3$'$) to extend $B-x$ to a subset $B''\in\B$ of~$(B-x)\cup B'$. Then $(B'',B')$ is not a counterexample, so the extension $B''\sm (B-x)$ contains at least two elements $y,z$. Now use (I3$'$) to extend $B''-z$ back into~$B$: this yields the base $(B''-z)+x\supe B+y$, which contradicts the maximality of~$B$ as a base.\endproof

\begin{lemma}\label{C0}
Every dependent set contains a circuit.
\end{lemma}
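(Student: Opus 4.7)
The plan is to exhibit, inside any dependent set $D$, an explicit ``fundamental circuit.'' First I apply (IM) with starting set $\es\in\I$ and ambient set $X=D$ to obtain a maximal independent subset $I\sub D$. Since $D\notin\I$, there is some $x\in D\sm I$, and the maximality of $I$ forces $I+x\notin\I$. To put (I3$'$) into a form where $I$ is actually a basis of the ambient matroid, I pass to the restriction $M':=M|D$ of Theorem~\ref{restrictions}, whose independent sets are $\I\cap 2^D$ and in which $I$ is a basis; any circuit of $M'$ will then automatically be a circuit of $M$ contained in~$D$.

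I then propose
 $$C\ :=\ \{x\}\ \cup\ \{\,y\in I\mid (I-y)+x\in\I\,\}$$
as the circuit to exhibit. Showing that every proper subset $C'\subsetneq C$ is independent is a quick case split: if $x\notin C'$, then $C'\sub I\in\I$; if $x\in C'$, pick any $y\in C\sm C'$ (necessarily $y\ne x$, hence $y\in I$ with $(I-y)+x\in\I$), observe $C'\sub C-y\sub(I-y)+x$, and conclude $C'\in\I$ by~(I2).

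The heart of the argument is showing that $C$ itself is dependent. Suppose for contradiction that $C\in\I$. Use (I3$'$) in $M'$ to extend $C$ to a basis $B^*$ of~$M'$ with $B^*\sub C\cup I=I+x$. Since $I+x$ is dependent in $M'$, this inclusion is strict, so I can pick some $y\in(I+x)\sm B^*$; as $x\in C\sub B^*$, necessarily $y\in I$. The bases $I$ and $B^*$ of $M'$ satisfy $B^*\sm I=\{x\}$, so $|B^*\sm I|=1$, and Lemma~\ref{card} (applied in~$M'$) then forces $|I\sm B^*|=1$ as well. Together with $y\in I\sm B^*$ this pins $B^*$ down as $(I-y)+x$, so in particular $(I-y)+x\in\I$. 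By the definition of $C$ this puts $y$ into $C$; but $C\sub B^*=(I-y)+x$ simultaneously excludes $y$ from~$C$, a contradiction.

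The main obstacle is precisely this dependence step. Without the cardinality symmetry supplied by Lemma~\ref{card}, one cannot collapse the freedom in $B^*$ to the single-element swap $(I-y)+x$, and the definition of $C$ would not close the loop. Passing to $M'=M|D$ is what makes Lemma~\ref{card} applicable (it needs $I$ to be a basis of the ambient matroid) and, as a bonus, keeps the resulting circuit inside~$D$.
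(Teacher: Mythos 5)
Your proof is correct and follows essentially the same route as the paper's: both reduce to a restriction where a maximal independent set $I$ (resp.\ $B$) becomes a basis, define the candidate circuit as $\{x\}\cup\{\,y\in I\mid (I-y)+x\in\I\,\}$, and derive dependence by extending it via (I3$'$) to a basis of $I+x$ and invoking Lemma~\ref{card} to force a single-element exchange that contradicts the definition of $C$. The only cosmetic difference is that the paper first reduces to the case $D=E$ via Theorem~\ref{restrictions} and then verifies minimality in one line, whereas you work inside $M|D$ throughout and spell out the case split for proper subsets.
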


\proof
By Theorem~\ref{restrictions}, it suffices to assume that $E\notin\I$ and find a circuit in~$E$. Pick a base $B\in\B$; this exists by (I1) and~(IM). Then $B\subne E$; pick $z\in E\sm B$. We shall prove that
 $$C := \{\,x\in B+z\mid B+z-x\in\I\,\}\,.$$
 is a circuit. Note that $z\in C$.

We first show that $C$ is dependent. Suppose not, and use (I3$'$) to extend $C$ to a subset $B'\in\B$ of $C\cup B = B+z$. Since $B'\sm B = \{z\}$, we have $|B\sm B'| = 1$ by Lemma~\ref{card}, say $B\sm B' = \{y\}$. But then $B+z-y = B'\in\B$, so $y\in C\sub B'$ by definition of~$C$. This contradicts the definition of~$y$.

$C$ is minimally dependent, since for every $x\in C$ we have $C-x\sub B+z-x\in\I$ by definition of~$C$.
\endproof

Recall that a matroid is called {\em finitary\/} if any set whose finite subsets are independent is also independent.

\begin{corollary}\label{finitary}
A matroid is finitary if and only if every circuit is finite.
\end{corollary}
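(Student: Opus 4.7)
The proof is a short two-way argument, using only the definition of \emph{circuit} (as a minimal dependent set) and the just-proved Lemma~\ref{C0} (every dependent set contains a circuit). The plan is the following.

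For the forward direction, I would take an arbitrary circuit $C$ and argue by contradiction that it must be finite. Suppose $C$ is infinite. By minimality of $C$ as a dependent set, every proper subset of $C$ is independent; in particular, every finite subset of $C$ is independent. If the matroid is finitary (satisfies (I4) in addition to the independence axioms), this forces $C$ itself to be independent, contradicting the fact that a circuit is dependent.

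For the backward direction, assume every circuit of $M=(E,\I)$ is finite, and let $I\sub E$ be a set all of whose finite subsets are independent; I want to show $I\in\I$. Suppose not; then $I$ is dependent, and by Lemma~\ref{C0} there is a circuit $C\sub I$. By hypothesis $C$ is finite, so $C$ is a finite dependent subset of $I$, contradicting the assumption that every finite subset of $I$ lies in $\I$.

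There is no real obstacle: both directions are immediate once one invokes Lemma~\ref{C0} for the nontrivial half. The only mildly subtle point is conceptual rather than technical, namely that without Lemma~\ref{C0} one could not guarantee, in the backward direction, that a dependent set has a circuit inside it at all; this is exactly why the corollary is placed where it is in the text.
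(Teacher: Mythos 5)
Your proof is correct and follows essentially the same route as the paper's: the forward direction uses minimality of a circuit plus (I4), and the backward direction invokes Lemma~\ref{C0} to extract a (necessarily finite) circuit from a hypothetical dependent set, exactly as in the text. No gaps.
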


\proof
A finitary matroid clearly has no infinite circuits.%
   \COMMENT{}
   Conversely, a set whose finite subsets are independent cannot contain a finite circuit. Hence if all circuits are finite it contains no circuit, and is therefore independent by Lemma~\ref{C0}.
\endproof

Let $\cl\colon 2^E\to 2^E$ be the closure operator associated with~$\I$. 

\begin{lemma}\label{FactE}
If $B$ is a maximal independent subset of~$X$, then $\cl(B) = \cl(X)$.
\end{lemma}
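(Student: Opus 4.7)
The plan is to establish the two inclusions $\cl(B) \subseteq \cl(X)$ and $\cl(X) \subseteq \cl(B)$ separately.

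The inclusion $\cl(B) \subseteq \cl(X)$ is immediate from the definition of $\cl$: any $x \in \cl(B)$ either lies in $B \subseteq X \subseteq \cl(X)$, or there is some $I \subseteq B$ with $I \in \I$ and $I+x \notin \I$, and the same $I$ (now viewed as a subset of $X$) witnesses $x \in \cl(X)$.

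For the reverse inclusion, fix $x \in \cl(X)$ with $x \notin B$; the goal is to show that $B+x \notin \I$, for then $I := B$ itself witnesses $x \in \cl(B)$. If $x \in X\setminus B$ this is immediate from the maximality of $B$ in $X$. Otherwise $x \notin X$, so the definition of $\cl(X)$ supplies some $I \subseteq X$ with $I \in \I$ and $I+x \notin \I$. I would then apply the strengthened augmentation (I3$'$), stated at the start of Section~\ref{basics}, inside the restriction $M|X$ (a matroid by Theorem~\ref{restrictions}) to route $I$ through $B$: this yields a basis $B_1$ of $M|X$ with $I \sub B_1 \sub I\cup B$. Since $I+x \notin \I$, axiom (I2) gives $B_1 + x \notin \I$.

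Now suppose for contradiction that $B+x \in \I$, and work inside $M|(X+x)$, again a matroid by Theorem~\ref{restrictions}. There, $B$ is \emph{not} maximal in $\I\cap 2^{X+x}$ because $B+x$ is a strictly larger independent set; on the other hand, $B_1$ \emph{is} maximal in $\I\cap 2^{X+x}$, since for every $y \in X\sm B_1$ we have $B_1+y \notin \I$ by maximality of $B_1$ in $X$, and $B_1+x \notin \I$ by construction. Applying (I3) in $M|(X+x)$ to the non-maximal $B$ and the maximal $B_1$ then yields some $y \in B_1 \sm B$ with $B+y \in \I$. But $y \in B_1 \sub X$ and $y \neq x$ (since $x \notin X$), so $B+y$ is an independent subset of $X$ strictly containing $B$, contradicting the maximality of $B$ in $X$. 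Hence $B+x \notin \I$, and $x \in \cl(B)$.

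The main obstacle is the case $x \notin X$: the witness $I$ supplied by $x \in \cl(X)$ is arbitrary and need not interact with $B$, so the real content is the rerouting of $I$ into a basis of $M|X$ via (I3$'$) followed by the comparison of two maximality witnesses in $M|(X+x)$ via (I3). This pattern, rather than any cardinality comparison of $B$ and $B_1$, is what makes the argument work in the infinite setting.
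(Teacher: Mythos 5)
Your proof is correct and follows essentially the same route as the paper's: both construct a maximal independent subset of $X+x$ that contains the witness $I$ (and hence omits $x$), and then apply (I3) in the restriction to $X+x$ against $B$ to contradict the maximality of $B$ in $X$. The only cosmetic difference is that you obtain this auxiliary set via (I3$'$) in $M|X$ followed by a maximality check, whereas the paper extends $I$ directly by (IM) inside $X+x$.
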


\proof
The inclusion $\cl(B)\sub\cl(X)$ is trivial since $B\sub X$; we show the converse. Let $y\in \cl(X)$ be given, witnessed by an independent set $I\sub X$ such that $I+y\notin\I$. By~(IM), we can extend $I$ to a maximal independent subset $B'$ of~$X+y$. Clearly $y\notin B'$, so $B'\sub X$. If $y\in \cl(B)$ we are done. If not then $B+y\in\I$, so $B$ is an independent but not a maximal independent subset of~$X+y$. By Lemma~\ref{restr}, we may use (I3) in $X+y$ to extend $B$ into $B'$ to an independent subset of~$X$ that contains $B$ properly, contradicting its maximality.
\endproof

The following lemma was already used in the proof of Example~\ref{Tinfty}:

\begin{lemma}\label{circuitcocircuitcap}
A circuit and a cocircuit of a matroid never meet in exactly one element.
\end{lemma}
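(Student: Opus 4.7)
The plan is a proof by contradiction: assume $C$ is a circuit and $D$ a cocircuit with $C\cap D=\{e\}$, and construct a basis disjoint from $D$, contradicting the basic fact that every cocircuit meets every basis. The observation I would invoke twice is this: since a cocircuit $D$ is by definition a circuit of $M^*$, $D$ lies outside $\I^*=\dcl{\B^*}$, hence outside every cobasis $\Bbar=E\sm B$; equivalently, $D\cap B\ne\es$ for every basis $B$ of $M$.

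Given that, I would first use the dual side to locate a basis $B$ with tiny intersection with $D$. Since $D$ is minimally dependent in $M^*$, the set $D-e$ is independent in $M^*$, so it extends to a cobasis $E\sm B$ for some basis $B$ of $M$. This yields $B\cap D\sub\{e\}$, and combined with the previous paragraph we must have $B\cap D=\{e\}$. On the primal side, $C-e\in\I$, so by (I3$'$), already established at the start of this section from (IM) and (I3), I can extend $C-e$ into the basis $B$ to obtain a basis $B''$ with $C-e\sub B''\sub (C-e)\cup B=C\cup B$, the last equality because $e\in B$.

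The contradiction now drops out by case analysis on whether $e\in B''$. If $e\in B''$, then $C\sub B''$, contradicting that $C$ is dependent. If $e\notin B''$, then $B''\sub (C-e)\cup(B-e)$; but $(C-e)\cap D=\es$ because $C\cap D=\{e\}$, and $(B-e)\cap D=\es$ because $B\cap D=\{e\}$, so $B''\cap D=\es$, contradicting the cocircuit-meets-basis observation above applied to the basis $B''$. The only step that is not routine is spotting the right pairing: dualise to squeeze $D$ into a single point of some basis $B$, and then use (I3$'$) to merge this $B$ with $C-e$ into a basis $B''$ that is simultaneously controlled on both the circuit and cocircuit sides.
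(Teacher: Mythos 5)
Your proof is correct and follows essentially the same route as the paper's: use coindependence of $D-e$ to find a basis $B$ with $B\cap D\sub\{e\}$, extend $C-e$ into $B$ via (I3$'$) to a basis $B''$, note $e\notin B''$ because $C$ is dependent, and conclude $B''\cap D=\es$, contradicting that the cocircuit $D$ must meet every basis. The only cosmetic difference is your explicit case split on $e\in B''$, which the paper handles by observing directly that $e\notin B''$.
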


\proof
Let $C$ be a circuit, and $D$ a cocircuit, such that $C\cap D = \{x\}$. As $D-x$ is coindependent, it misses a base~$B$. Apply (I3$'$) to extend the independent set ${C-x}$ to a base $B'\sub (C-x)\cup B$.%
   \vadjust{\penalty-200}
   Since $C$ is dependent and $C-x\sub B'$, we have $x\notin B'$. Hence $D\cap B'=\es$, contradicting our assumption that $D$ is codependent.
\endproof

We still have to show that the relative rank function $r$ associated with~$\I$ is well defined:

\begin{lemma}\label{rwelldefined}
Given $B\sub A\sub E$, there exist maximal elements $J$ of~$\I\cap 2^B$ and $I$ of~$\I\cap 2^A$ such that $J\sub I$. All such sets $J$ and $I$ satisfy $|I\sm J| = r(A|B)$.
\end{lemma}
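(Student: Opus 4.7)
I would attack the lemma in three stages: existence of a pair $(J,I)$ of the stated form; equicardinality of $|I\sm J|$ over all such pairs; and identification of this common value with the maximum $r(A|B)$.

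For existence I would apply (IM) twice. First, with $I=\es$ and $X=B$, obtain some $J$ maximal in $\I\cap 2^B$. Then, since $J\in\I$ and $J\sub A$, a second application with this $J$ and $X=A$ yields some $I$ maximal in $\{I'\in\I\mid J\sub I'\sub A\}$. A one-line check shows $I$ is actually maximal in $\I\cap 2^A$ as well: any strictly larger independent subset of $A$ would still contain $J$, contradicting the narrower maximality.

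For the cardinality part, the key observation is that whenever $(J,I)$ is as in the lemma, $K:=I\sm J$ is a basis of the minor $M':=(M|A).(A\sm B)$. To see this, work inside $M|A$ (a matroid by Theorem~\ref{restrictions}) with $X:=A\sm B$: then $J$ is a basis of $(M|A)-(A\sm B)=M|B$, while $K\cup J=I$ is a basis of $M|A$, so Lemma~\ref{bases} direction (ii)$\to$(i) gives that $K$ is a basis of $M'$. For two pairs $(J_1,I_1)$ and $(J_2,I_2)$, the sets $K_1$ and $K_2$ are then both bases of the same matroid $M'$, and Lemma~\ref{card} forces $|K_1|=|K_2|$ under the paper's convention identifying all infinite cardinalities: if $|K_1|<\infty$, then $|K_1\sm K_2|<\infty$ and Lemma~\ref{card} yields $|K_2\sm K_1|=|K_1\sm K_2|$, whence $|K_2|=|K_1|$; if $|K_1|=\infty$, then $|K_2|=\infty$ too, since otherwise swapping roles and applying the finite case would yield a contradiction.

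Finally, to match $r(A|B)$ with this common value, note that the definition of $r$ only requires $I\in\I\cap 2^A$ with $J\sub I$, not maximality of $I$; but one more application of (IM) enlarges any such $I$ to some $I^*$ maximal in $\I\cap 2^A$ with $J\sub I^*$, so $|I\sm J|\le|I^*\sm J|$ and the supremum is attained precisely at the pairs of the lemma. The main obstacle I anticipate is the equicardinality step: Lemma~\ref{card} is stated only for pairs of bases with finite symmetric difference, so upgrading it to the unconditional equality $|K_1|=|K_2|$ needs the small case split above together with the paper's identification of infinite cardinals.
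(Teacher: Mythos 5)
Your proof is correct and follows essentially the same route as the paper's: existence by two applications of (IM), then Lemma~\ref{bases} to normalize the comparison and Lemma~\ref{card} (plus the convention identifying infinite cardinals) for equicardinality. The only cosmetic difference is that you compare $I\sm J$ as bases of the contraction $(M|A).(A\sm B)$, whereas the paper uses Lemma~\ref{bases}\thinspace(ii)$\Leftrightarrow$(iii) to replace $J'$ by $J$ and compares the resulting bases of $M|A$ directly --- the same argument in different packaging.
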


\proof
Let $J$ be an arbitrary maximal element of~$\I\cap 2^B$; it exists by~(IM). Use (IM) again to extend $J$ to a maximal element $I$ of~$\I\cap 2^A$. To show that $|I\sm J| = r(A|B)$,%
   \COMMENT{}
consider any pair $J'\sub I'$ of independent subsets of $A$ such that $J'$ is maximally independent in~$B$. We have to show that $|I'\sm J'|\le |I\sm J|$.

We may assume that $I'$ is maximal in~$\I\cap 2^A$, and that $J'=J$: if not, we could use Lemma~\ref{bases} (\ref{basesii})$\Leftrightarrow$(\ref{basesiii}) in $M|A$ to replace $J'$ with~$J$  in~$I'$ without affecting $I'\sm J'$. If both $I\sm J$ and $I'\sm J$ are infinite, we have $|I'\sm J'| = |I\sm J|$ as desired.%
   \footnote{Recall our convention that we do not distinguish between infinite cardinalities.}
   If one of them is finite, then $|I'\sm J'| = |I'\sm J| = |I\sm J|$ by Lemma~\ref{card} applied in~$M|A$.

\endproof

\begin{lemma}\label{rminor} 
Let $Y\subseteq X\subseteq E$, and let $r'$ be the relative rank function associated with $(M|X)/Y$. Then for any $A, B$ such that $Y\subseteq B\subseteq A\subseteq X$, we have 
$$r'(A\sm Y\,|\,B\sm Y)=r(A|B).$$
\end{lemma}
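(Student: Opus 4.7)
The plan is to reduce the relative rank computation in the minor $(M|X)/Y$ to a computation in $M$, by fixing a basis of $M|Y$ and using it to translate between independent sets of the minor and of $M$.

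First I would fix a basis $I_Y$ of the restriction $M|Y$, which exists by (IM) applied in $M|Y$ (a matroid by Theorem~\ref{restrictions}). The key claim is: for every $I\sub X\sm Y$, the set $I$ is independent in $(M|X)/Y$ if and only if $I\cup I_Y$ is independent in $M$. For the forward direction I would extend $I$ to a basis $\bar I$ of $(M|X)/Y$; by Lemma~\ref{bases} applied inside $M|X$ (noting that $I_Y$ is a basis of $(M|X)-(X\sm Y)=M|Y$), $\bar I\cup I_Y$ is a basis of $M|X$, so $I\cup I_Y$ is independent in $M$. For the converse, use (IM) to extend $I\cup I_Y$ to a basis $B$ of $M|X$; since $I_Y$ is maximal independent in $Y$ we must have $B\cap Y=I_Y$, so $B\sm Y\supseteq I$, and Lemma~\ref{bases} then shows that $B\sm Y$ is a basis of $(M|X)/Y$, so $I$ is independent there.

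Next I would translate this to maximal independent subsets: $J'\sub B\sm Y$ is a maximal independent subset of $B\sm Y$ in $(M|X)/Y$ if and only if $J'\cup I_Y$ is a maximal independent subset of $B$ in $M$, and analogously for $A\sm Y$ versus $A$. Independence on both sides is handled by the previous claim. For maximality, a hypothetical violator $x\in B\sm (J'\cup I_Y)$ in $M$ cannot lie in $Y$ (else $I_Y+x$ would be independent, contradicting maximality of $I_Y$ in $Y$), hence $x\in B\sm Y$ and $J'+x$ violates maximality of $J'$ in the minor; conversely, any violator of maximality of $J'$ in the minor directly produces one for $J'\cup I_Y$ in $M$.

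Finally, I would pick a maximal independent subset $J'$ of $B\sm Y$ in the minor, and extend it via (IM) inside $(M|X)/Y$ to a maximal independent subset $I'\supseteq J'$ of $A\sm Y$ in the minor. Set $J'':=J'\cup I_Y$ and $I'':=I'\cup I_Y$. By the correspondence above, $J''\sub I''$ are maximal independent subsets of $B$ and $A$ in $M$, so Lemma~\ref{rwelldefined} yields $r(A|B)=|I''\sm J''|$ and $r'(A\sm Y \mid B\sm Y)=|I'\sm J'|$. Since $I_Y\sub Y$ is disjoint from both $I'$ and $J'$ (which lie in $X\sm Y$), we have $I''\sm J''=I'\sm J'$, so the two rank values coincide. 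The main obstacle is the first-step independence correspondence, which requires both directions of Lemma~\ref{bases} inside $M|X$; once that is in place, the rest is careful bookkeeping.
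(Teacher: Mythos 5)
Your proof is correct and follows essentially the same route as the paper's: both rest on the correspondence, via Lemma~\ref{bases}, between nested maximal independent sets of $Y\subseteq B\subseteq A$ in $M$ and nested bases of the contracted minors, followed by two applications of Lemma~\ref{rwelldefined}. The paper merely runs the construction in the opposite direction --- it builds the chain $K\subseteq J\subseteq I$ in $M$ by (IM) and projects down, whereas you build the chain in the minor and lift it by adjoining a fixed basis $I_Y$ of $M|Y$ --- and the final computation $|I\sm J|=|(I\sm K)\sm(J\sm K)|$ is identical.
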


\proof By (IM) there exist maximal elements $K$ of $\I\cap 2^Y$ and $J\supe K$ of $\I\cap 2^B$ and $I\supe J$ of $\I\cap 2^A$. Then $|I\sm J| = r(A|B)$ by Lemma \ref{rwelldefined}. But $I\sm J = (I\sm K)\sm (J\sm K)$. As $I\sm K$ and $J\sm K$ are bases of $(M|A)/Y$ and $(M|B)/Y$, respectively (Lemma~\ref{bases}), another application of Lemma \ref{rwelldefined} yields
 $$r(A|B) = |I\sm J| = |(I\sm K)\sm (J\sm K)| = r'(A\sm Y\,|\, B\sm Y)$$
as desired.
\endproof

\section{Equivalence of the axiom systems}\label{eq}

In this section we prove that our axiom systems are equivalent. In our use of the terms `dependent', `independent', `base', `circuit' and `closure' we stick to their definitions as given in Section~\ref{independenceaxioms}, referring to a set system $\I$ known or assumed to satisfy the independence axioms. When we do not assume this, as will often be the case in this section, we shall use unambiguous other terms defined in the context of the axioms assumed, such as `maximal $\C$-independent set'.

\begin{theorem}\label{Beq}\
\begin{enumerate}[\rm (i)]\itemsep=0pt
   \item If a set $\I\sub 2^E$ satisfies the independence axioms, then the set $\B$ of bases satisfies the base axioms with $\I$ as the set of $\B$-independent sets.
   \item If a set $\B\sub 2^E$ satisfies the base axioms, then the set $\I$ of $\B$-independent sets satisfies the independence axioms with $\B$ as the set of bases.
\end{enumerate}
\end{theorem}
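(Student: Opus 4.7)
The plan is to establish the two directions separately, noting that (IM) and (BM) coincide once $\I$ has been identified with $\dcl{\B}$, so the only real work lies in the exchange axioms (B2) in~(i) and (I3) in~(ii).

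For direction~(i), set $\B:=\Imax$. Axiom (B1) is immediate from (IM) applied to $\es\in\I$ with $X=E$. For (B2), given $B_1,B_2\in\B$ and $x\in B_1\sm B_2$, the set $B_1-x$ lies in $\I\sm\Imax$ (by (I2), and because $B_1$ itself strictly extends it), so (I3) with $I'=B_2$ yields $y\in B_2\sm(B_1-x)$ with $(B_1-x)+y\in\I$; automatically $y\neq x$, hence $y\in B_2\sm B_1$. To upgrade $(B_1-x)+y$ to a basis, I would extend it via (IM) to some $B^*\in\B$ and apply Lemma~\ref{card} to $(B_1,B^*)$: since $B_1-x\sub B^*$ one has $|B_1\sm B^*|\le 1$, so $|B^*\sm B_1|=|B_1\sm B^*|$, and a short case split using $y\in B^*\sm B_1$ forces $x\notin B^*$ and $B^*=(B_1-x)+y$. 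Axiom (BM) is just (IM) once one observes $\dcl{\B}=\I$, whose nontrivial containment $\I\sub\dcl{\B}$ follows by extending each $I\in\I$ to an element of $\Imax$ using (IM).

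For direction~(ii), set $\I:=\dcl{\B}$. Axioms (I1), (I2) are immediate and (IM) is literally (BM). The key preparatory step is to show $\Imax=\B$. The inclusion $\B\sub\Imax$ reduces to $\B$ being an antichain, and a hypothetical proper inclusion $B\subne B'$ in $\B$ applied to (B2) with any $x\in B'\sm B$ would demand $y\in B\sm B'=\es$. The reverse inclusion $\Imax\sub\B$ is direct from $\I=\dcl{\B}$ together with the maximality of elements of $\Imax$.

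The substance of direction~(ii) is (I3). Given $I\in\I\sm\B$ and $B'\in\B$, apply (BM) with $X:=I\cup B'$ to obtain a set $J$ maximal in $\{\,I''\in\I\mid I\sub I''\sub I\cup B'\,\}$. The key claim is $J\in\B$; granting it, the antichain property of $\B$ forces $I\subne J$, and any $y\in J\sm I\sub B'\sm I$ satisfies $I+y\sub J\in\I$ as required. To prove the claim, suppose $J\notin\B$; extend $J$ via (BM) with $X=E$ to some $B_0\in\B$ with $J\subne B_0$, and pick $z\in B_0\sm J$. If $z\in B'$ then $J+z\sub B_0\cap(I\cup B')$ is independent and strictly contains $J$, contradicting its maximality. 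Otherwise $z\in B_0\sm B'$, and (B2) applied to $B_0$, $B'$, $z$ yields $y\in B'\sm B_0$ with $(B_0-z)+y\in\B$; since $z\notin J$ one has $J\sub B_0-z$, so $J+y\sub(B_0-z)+y\in\I$, and $y\in B'\sm B_0\sub B'\sm J$ again contradicts maximality. The main obstacle throughout is precisely this step: (BM) only guarantees maximality of $J$ within $I\cup B'$, whereas (I3) needs a direct element witness; (B2) is exactly the lever that pushes an obstructing $z\in B_0\sm B'$ back into $B'$ so that the maximality of $J$ can be contradicted from inside the sandwich $I\cup B'$.
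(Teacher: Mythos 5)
Your proof is correct, and its skeleton matches the paper's: the non-exchange axioms are dispatched the same way (including the observation $\I=\dcl{\Imax}$ in (i) and the antichain property of $\B$ from (B2) in (ii)), so the comparison concerns only the two exchange steps, which you argue differently. In (i), after obtaining $y$ from (I3), the paper shows $(B_1-x)+y\in\Imax$ by a \emph{second} application of (I3), with $I:=(B_1-x)+y$ and $I':=B_1$: since $B_1\sm\bigl((B_1-x)+y\bigr)=\{x\}$, any proper independent extension into $B_1$ would yield $B_1+y\in\I$, contradicting $B_1\in\Imax$. You instead extend $(B_1-x)+y$ to a basis $B^*$ and use Lemma~\ref{card} to force $B^*=(B_1-x)+y$; this is sound (Lemma~\ref{card} is proved in Section~\ref{basics}, before Theorem~\ref{Beq}), but it imports a cardinality lemma where a one-line reuse of (I3) suffices, so the paper's version is more self-contained. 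In (ii), the paper extends $I$ by (IM) to a global basis $B$, picks $x\in B\sm I$, and performs at most one (B2)-exchange to move $x$ into $I'$; you instead take a maximal independent $J$ in the sandwich $I\cup I'$ and prove $J\in\B$ by contradiction via (B2). Your route is a little longer but proves, en passant, the stronger conclusion that $I$ extends to a \emph{basis} between $I$ and $I\cup I'$ --- essentially the statement (I3$'$) from the beginning of Section~\ref{basics} --- which the paper obtains separately. Both arguments are complete; no gaps.
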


\goodbreak

\proof
(i) Let $\I$ satisfy the independence axioms. Applying (IM) with $X:= E$, we see that every set in $\I$ extends to a set in~$\Imax$. Hence (I1) implies~(B1), and $\I=\dcl{\Imax}$; in particular, (IM) implies~(BM).%
   \COMMENT{}

To prove~(B2), let $B_1,B_2\in\B:=\Imax$ and $x\in B_1\sm B_2$ be given. Applying (I3) with $I:= B_1-x$ and $I':= B_2$, we find an element $y\in B_2\sm B_1$ such that $B:= (B_1 - x) + y\in \I$. We have us show that $B\in\Imax$. If $B\notin\Imax$, we can apply (I3) with $I:= B$ and $I':= B_1$ to extend $B$ into~$B_1$ to a set $B'\in\I$. But $B_1\sm B = \{x\}$, so this means that $B_1\subne B'\in\I$, as $y\in B'\sm B_1$. This contradicts our assumption that $B_1\in\Imax$.

(ii) Let $\B$ satisfy the base axioms, and let $\I:=\dcl\B$. Then (B1) implies~(I1), (I2)~is trivial, and (BM) trivially implies~(IM). Since by (B2) no set in $\B$ contains another, we also have $\B = \Imax$.

To prove~(I3), let $I\in\I\sm\B$ and $I'\in\B$ be given. Use (IM) with $X:= E$ to extend $I$ to a set $B\in\B=\Imax$, and pick $x\in B\sm I$. If $x\in I'$, then $I + x\in\I$ is as desired. If not, we can use (B2) with $B_1:= B$ and $B_2 := I'$ to find $y\in I'\sm B$ such that $(B-x)+y\in\B$. As $I\sub B-x$ this yields $I+y\in\I$, as required for~(I3).
   \endproof

\begin{theorem}\label{CLeq}\
\begin{enumerate}[\rm (i)]\itemsep=0pt
   \item If a set $\I\sub 2^E$ satisfies the independence axioms, then the associated closure operator \cl\ satisfies the closure axioms with $\I$ as the set of $\cl$-inde\-pen\-dent sets.
   \item If a function $\cl\colon 2^E\to 2^E$ satisfies the closure axioms, then the set $\I$ of $\cl$-independent sets satisfies the independence axioms with \cl\ as the associated closure operator.
\end{enumerate}
\end{theorem}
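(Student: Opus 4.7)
\medbreak

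\noindent\emph{Plan.} For part (i), I would verify (CL1)--(CL4) and (CLM) in turn for the closure operator $\cl$ associated with $\I$. Axioms (CL1) and (CL2) are immediate from the definition. For (CL4), suppose $y\in\cl(Z+x)\sm\cl(Z)$. If $y\in Z+x$ then $y\notin Z$ forces $y=x$, giving $x\in\cl(Z+y)$ by (CL1). Otherwise a witness $I\sub Z+x$, $I\in\I$, $I+y\notin\I$ must contain $x$ (else $I\sub Z$ witnesses $y\in\cl(Z)$), and then $(I-x)+y\in\I$ (else $y\in\cl(I-x)\sub\cl(Z)$), while $((I-x)+y)+x = I+y\notin\I$, so $x\in\cl((I-x)+y)\sub\cl(Z+y)$ by (CL2). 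For (CLM), I would show that $\cl$-independence coincides with membership in $\I$: if $I\in\I$ and $J\sub I-x$ with $J+x\notin\I$, then $J+x\sub I$ contradicts (I2), so $x\notin\cl(I-x)$; conversely if $I\notin\I$, Lemma~\ref{C0} gives a circuit $C\sub I$, and for any $x\in C$ we have $C-x\in\I$ with $(C-x)+x\notin\I$, so $x\in\cl(C-x)\sub\cl(I-x)$.

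The main obstacle is (CL3). Given $y\in\cl(\cl(X))$ witnessed by $I_0\sub\cl(X)$, I would let $B_X$ be a maximal independent subset of $X$ (by (IM)), so $\cl(X)=\cl(B_X)$ by Lemma~\ref{FactE}. Assume for contradiction $y\notin\cl(B_X)$; then $B_X+y\in\I$, and in fact $B_X+y$ is maximal in $\I\cap 2^{\cl(B_X)+y}$ since for any $z\in\cl(B_X)\sm B_X$ some $J\sub B_X$ has $J+z\notin\I$, hence $B_X+y+z\notin\I$ by (I2). The restriction $M|(\cl(B_X)+y)$ is a matroid (Theorem~\ref{restrictions}), and by (IM) there is a maximal independent set $B'$ of this restriction with $I_0\sub B'$; then $y\notin B'$ (else $I_0+y\sub B'\in\I$), so $B'\sub\cl(B_X)$. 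Applying (B2) in this restriction (valid by Theorem~\ref{Beq}) to the bases $B_X+y$ and $B'$ at the element $y\in(B_X+y)\sm B'$, I obtain $z\in B'\sm(B_X+y)\sub\cl(B_X)\sm B_X$ with $B_X+z$ independent in the restriction, hence in $\I$. But $z\in\cl(B_X)\sm B_X$ means some $J\sub B_X$, $J\in\I$, has $J+z\notin\I$, and then $J+z\sub B_X+z\in\I$ contradicts (I2).

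For part (ii), the pivotal tool is the following key lemma: \emph{for $I\in\I$ and $y\notin I$, $I+y\in\I$ iff $y\notin\cl(I)$.} The forward direction is immediate from $\cl$-independence applied at $y\in I+y$. The reverse direction: for any $z\in I$, if $z\in\cl((I-z)+y)$, then since $z\notin\cl(I-z)$ by $\cl$-independence of $I$, (CL4) yields $y\in\cl(I)$, contradicting $y\notin\cl(I)$; and $y\notin\cl(I)$ handles the case $z=y$ directly. Now (I1) and (I2) are routine (the latter uses (CL2)), and (IM) is (CLM). For (I3), I would first note that any $I'\in\Imax$ satisfies $\cl(I')=E$: for $z\in E\sm I'$, $I'+z\notin\I$ so $z\in\cl(I')$ by the key lemma. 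Non-maximality of $I$ similarly yields $\cl(I)\ne E$. If $I'\sub\cl(I)$, then $E=\cl(I')\sub\cl(\cl(I))=\cl(I)$ by (CL2) and (CL3), contradiction; so there is $x\in I'\sm\cl(I)\sub I'\sm I$, and $I+x\in\I$ by the key lemma.

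Finally I must check that the closure operator $\cl_\I$ associated with $\I$ equals $\cl$. For $\cl_\I(X)\sub\cl(X)$: elements of $X$ lie in $\cl(X)$ by (CL1), and any $y$ witnessed by $I\sub X$, $I\in\I$, $I+y\notin\I$ satisfies $y\in\cl(I)\sub\cl(X)$ by the key lemma and (CL2). For the reverse, I would first prove the analogue of Lemma~\ref{FactE} directly from the CL-axioms: if $B$ is maximal $\cl$-independent in $X$, then $X\sub\cl(B)$ (for $z\in X\sm B$, $B+z\notin\I$, so $z\in\cl(B)$ by the key lemma), whence $\cl(X)\sub\cl(\cl(B))=\cl(B)$ by (CL2) and (CL3). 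Given $y\in\cl(X)\sm X$, pick such a $B$ by (CLM); then $y\in\cl(B)$ and $y\notin B$, so $B+y\notin\I$ by the key lemma, exhibiting $y\in\cl_\I(X)$.
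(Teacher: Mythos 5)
Your proposal is correct and follows essentially the same route as the paper's proof: your ``key lemma'' in part~(ii) is exactly the paper's auxiliary fact $(\ast)$, and your verification of (CL3) likewise works inside the restriction to $\cl(X)+y$ by playing two maximal independent subsets off against each other --- you invoke basis exchange (B2) via Theorems~\ref{restrictions} and~\ref{Beq} where the paper uses (I3) via Lemma~\ref{restr}, both of which are available at this point without circularity. The only genuine (minor) improvement is your argument for (I3) in part~(ii): deducing $I'\not\sub\cl(I)$ directly from $\cl(I')=E\ne\cl(I)$ using (CL2) and (CL3) is a little slicker than the paper's extension of $I$ to a maximal element of $\I\cap 2^{I\cup I'}$, though the underlying mechanism is the same.
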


\proof
(i) Let $\I$ satisfy the independence axioms, and let $\cl$ be the associated closure operator. Then (CL1) and (CL2) hold trivially. By~(I2), every set in $\I$ is $\cl$-independent. Conversely, a $\cl$-independent set $X$ lies in~$\I$: if not, then by (IM) it has a maximal independent subset~$I\subne X$, and every $x\in X\sm I$ satisfies $x\in\cl(I)$, contradicting the $\cl$-independence of~$X$. Hence the $\cl$-independent sets are precisely those in~$\I$, and (IM) implies~(CLM).

To prove~(CL3), let $X\sub E$ be given. By~(IM), $X$~has a maximal independent subset~$B$. By Lemma~\ref{FactE}, $B$~is maximally independent also in $\cl(X) = \cl(B)$.%
   \COMMENT{}
   By Lemma~\ref{FactE} applied to $B$ in~$\cl(X)$ this implies $\cl(B) = \cl(\cl(X))$, yielding $\cl(X) = \cl(\cl(X))$ in total.

Let finally $Z$, $x$ and~$y$ be given for the proof of~(CL4). As $y\in\cl(Z+x)\sm\cl(Z)$, there is an independent set $I\sub Z+x$ such that $I+y$ is dependent and $(I-x)+y\in\I$. As $I-x\sub Z$, this also witnesses that $x\in\cl(Z+y)$.

(ii) Let $\cl\colon 2^E\to 2^E$ satisfy the closure axioms, and let $\I$ be the set of $\cl$-independent sets. Then $\I$ satisfies (I1) and (I2) trivially, and (IM) is just a restatement of~(CLM).

For the remainder of our proof we shall need show the following fact:
\begin{equation}\tag{\ensuremath{\ast}}\label{star}
  \begin{minipage}[c]{0.8\textwidth}
    Whenever a set $Z\sub E$ is $\cl$-independent but $Z+x$ is not\\ (for some $x\in E$), we have $x\in\cl(Z)$.
  \end{minipage}\ignorespacesafterend
\end{equation}
Indeed, by assumption we have $x\notin Z$, and some $y\in Z+x$ lies in the closure of the other elements of~$Z+x$. If $y=x$, then $x=y\in\cl(Z)$ as claimed. If $y\ne x$ then $y\in Z$, so $y\notin\cl(Z-y)$ since $Z$ is $\cl$-independent. Hence $x\in\cl(Z)$ by~(CL4).%
   \COMMENT{}

To prove~(I3), let $I\in\I\sm\Imax$ and $I'\in\Imax$ be given. Use (CLM) to extend $I$ to a maximal element $B$ of $\I\cap 2^{I\cup I'}$. We shall prove that $B$ is maximal in all of~$\I$; then $B\sm I\ne\es$, and any $x\in B\sm I$ proves~(I3).%
   \COMMENT{}

To show that $B\in\Imax$, consider any $z\in E\sm B$. Then $z\in\cl(I')$: trivially if $z\in I'$, or by \eqref{star} and $I'\in\Imax$ if $z\notin I'$. Similarly, the maximality of $B$ in $\I\cap 2^{I\cup I'}$ implies by~\eqref{star} that $I'\sub\cl(B)$. Hence $z\in\cl(I')\sub\cl(\cl(B)) = \cl(B)$ by (CL2) and~(CL3). As $z\notin B$, this means that $B+z\notin I$ as desired.

It remains to show that $\cl$ coincides with the closure operator $\cl'$ associated with~$\I$, i.e.\ that $\cl(X) = \cl'(X)$ for every $X\sub E$. To show that $\cl(X)\sub\cl'(X)$, consider any $x\in\cl(X)$. If $x\in X$ then $x\in\cl'(X)$, so assume that $x\notin X$. Our assumption of $x\in\cl(X)$ now means that $X+x$ is \cl-dependent, that $X+x\notin\I$. By~(CLM), $X$~has a maximal \cl-independent subset~$I$. Then $X\sub\cl(I)$ by~$(*)$,%
   \COMMENT{}
   so $x\in\cl(X)\sub\cl(\cl(I)) = \cl(I)$ by (CL2) and~(CL3), showing that $x\in\cl'(X)$.%
   \COMMENT{}

The converse inclusion, $\cl'(X)\sub\cl(X)$, follows easily from~$(*)$.%
   \COMMENT{}
   \endproof

\begin{theorem}\label{Ceq}\ \vskip-6pt\vskip-6pt
\begin{enumerate}[\rm (i)]\itemsep=0pt
   \item If a set $\I\sub 2^E$ satisfies the independence axioms, then the set $\C$ of circuits satisfies the circuit axioms with $\I$ as the set of $\C$-inde\-pen\-dent sets.
   \item If a set $\C\sub 2^E$ satisfies the circuit axioms, then the set $\I$ of $\C$-independent sets satisfies the independence axioms with $\C$ as the set of circuits.
\end{enumerate}
\end{theorem}

\proof
(i) Let $\I$ satisfy the independence axioms, let $\C$ be the corresponding set of circuits, and let $\cl$ be the closure operator associated with~$\I$. (I1)~implies~(C1), and (C2) holds by definition of~$\C$. By (I2) and Lemma~\ref{C0}, the $\C$-independent sets are precisely those in~$\I$, so (IM) implies~(CM).

To prove~(C3), let $X\sub C\in\C$ and $(C_x\mid x\in X)$ and $z$ be given as stated. Let
  $$Y:= \Big(C\cup\bigcup_{x\in X} C_x\Big) \sm (X+z)\,.$$
  For every $x\in X$ we have $x\in \cl(C_x - x)$ and $(C_x - x)\cap (X+z) = \es$, so
  $$X\ \sub\ \cl\Big(\bigcup_{x\in X} (C_x - x)\sm (X+z)\Big)\ \sub\ \cl(Y).$$
  Hence
  $$C-z = (C\sm (X+z)) \cup X \sub Y\cup\cl(Y) = \cl(Y)$$
  and therefore
  $$z\in \cl (C-z)\sub \cl(\cl(Y)) = \cl(Y)$$
  by Theorem~\ref{CLeq}~(i). So $Y$ has an independent subset $I$ such that $I+z$ is dependent. By Lemma~\ref{C0}, $I+z$~contains a circuit, which clearly contains~$z$.

(ii) Let $\C$ satisfy the circuit axioms, and let $\I$ be the set of $\C$-independent sets. Then (I1) and~(I2) hold trivially,%
   \COMMENT{}
  and (IM) is just a restatement of~(CLM). By~(C2), no element of~$\C$ contains another, so $\C$~is the set of circuits.%
   \COMMENT{}

To prove~(I3), let $I\in\I\sm\Imax$ and $I'\in\Imax$ be given. Use (IM) with $X:= E$ to extend $I$ to a set $B\in\Imax$, and pick $z\in B\sm I$. If $z\in  I'$, then $x:= z$ is as required for~(I3).%
   \COMMENT{}
   If $z\notin I'$, then $I'+z$ contains a set $C\in\C$.%
   \COMMENT{}
   We wish to apply (C3) with $X:= C\sm B$ to obtain a contradiction.%
   \COMMENT{}
   Note that $X\sub I'\sm I$, since $I+z \sub B$. For each $x\in X$ we may assume that $I+x$ contains a set $C_x\in\C$, since otherwise $I+x$ witnesses~(I3). Then $z\notin I+x\supe C_x$ for all $x\in X$, so by (C3) there is a set $C'\in\C$ such that $C'\sub \big(C\cup\bigcup_{x\in X} C_x\big)\sm X.$ As $C_x\sm X\sub I\sub B$ for every~$x$, and $C\sm X\sub B$ by definition of~$X$, this implies that $C'\sub B\in\I$, a contradiction.
\endproof

We remarked in the introduction that, traditionally, infinite (finitary) matroids were defined by specifying that the finite sets in their collection $\I$ of independent sets should satisfy (I1)--(I3), and that the infinite sets in $\I$ were determined by~(I4), i.e., by taking all sets whose finite subsets were known to be in~$\I$. Using the circuit axioms, we can now prove easily that this does in fact define a matroid in our sense, i.e., that (IM) is true and (I3) also holds for infinite sets $I,I'\in\I$:

\begin{corollary}\label{OldFinitaryAreMatroids}
Let $\I\sub 2^E$ satisfy {\rm(I1), (I2)} and~{\rm(I4)},%
   \COMMENT{}
   and assume that the finite sets $I,I'\in\I$ satisfy the usual finite augmentation axiom~{\rm(I3)$_{\rm fin}$}.%
   \COMMENT{}
   Then $\I$ is the set of independent sets of a matroid.
\end{corollary}

\proof
Let $\C$ be the set of all minimal sets in~$2^E\sm\I$. These satisfy (C1) and~(C2), and by~(I4) they are finite. Our assumption of (I1)--(I3)$_{\rm fin}$ for the finite sets in~$\I$ therefore implies (C3) for~$\C$: given $C$ and $\{C_x\mid x\in X\}$ as in~(C3), the set $Y:= C\cup\bigcup_{x\in X} C_x$ is finite, so $\I\cap 2^Y$ is the set of independent sets of a matroid on~$Y$. Its circuits, which are precisely the sets in~$\C\cap 2^Y$,%
   \COMMENT{}
   satisfy the strong elimination axiom, and hence also our axiom~(C3) (induction on~$|X|$). Finally, (CM) follows by Zorn's Lemma. So $\C$ is the collection of circuits of a matroid.

For this to imply the assertion by Theorem~\ref{Ceq}\,(ii), we need that $\I$ contains precisely the $\C$-independent sets, those that have no subset in~$\C$. The sets in $\I$ are $\C$-independent by (I2) and the definition of~$\C$. If a set $D\sub E$ is not in~$\I$, it has a finite subset $F$ not in~$\I$, by~(I4), and hence a minimal such subset~$F$. Then $F\in\C$, so $D$ is not $\C$-independent.
\endproof

\begin{theorem}\label{Req}\ \vskip-6pt\vskip-6pt
\begin{enumerate}[\rm (i)]\itemsep=0pt
   \item If a set $\I\sub 2^E$ satisfies the independence axioms, then the associated relative rank function $r$ satisfies the rank axioms with $\I$ as the set of $r$-inde\-pen\-dent sets.
   \item If a function  $r\colon (2^E\times 2^E)_\subseteq\to \N\cup\{\infty\}$ satisfies the rank axioms, then the set $\I$ of $r$-independent sets satisfies the independence axioms with $r$ as the associated relative rank function.
\end{enumerate}
\end{theorem}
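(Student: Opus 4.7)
\proof[Proof plan]
For~(i), I first identify the $r$-independent sets with~$\I$, so that (RM) becomes a direct restatement of~(IM). If $I\in\I$ and $x\in I$, then $I-x$ is trivially maximal in $\I\cap 2^{I-x}$ and $I$ is maximal in $\I\cap 2^I$, so Lemma~\ref{rwelldefined} gives $r(I|I-x)=1$; conversely, if $I\notin\I$, then (IM) supplies a maximal independent subset $B\subne I$, and picking $x\in I\sm B$ the same~$B$ is maximal both in $I-x$ and in $I$, so $r(I|I-x)=0$. Granted this identification, (R1)~reduces to the familiar observation that a maximal $J\in\I\cap 2^B$ with extension $I\in\I\cap 2^A$ necessarily satisfies $I\sm J\sub A\sm B$. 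Axiom~(R3) follows by choosing maximal extensions $K\sub J\sub I$ through $C\sub B\sub A$ and summing $|I\sm K|=|I\sm J|+|J\sm K|$ via Lemma~\ref{rwelldefined}. Axiom~(R4) follows because $r(A_\gamma|B)=0$ forces any maximal $J\in\I\cap 2^B$ to be already maximal in each $A_\gamma$, hence in $\bigcup_\gamma A_\gamma$. For~(R2) I take $K$ maximal in $\I\cap 2^{A\cap B}$, extend to $J$ maximal in $\I\cap 2^B$ and then to $I$ maximal in $\I\cap 2^{A\cup B}$: maximality of~$K$ forces $J\cap A=K$, the (R1) argument forces $I\sm J\sub A\sm B$, so $I\cap A\in\I\cap 2^A$ contains~$K$ with $(I\cap A)\sm K=I\sm J$, witnessing $r(A|A\cap B)\ge|I\sm J|=r(A\cup B|B)$.

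For~(ii), let $\I$ denote the set of $r$-independent sets. Then (I1) is vacuous, (IM)~restates (RM), and for~(I2), given $x\in J\sub I\in\I$, applying (R2) with $A:=J$ and $B:=I-x$ (so that $A\cap B=J-x$ and $A\cup B=I$) yields $r(J|J-x)\ge r(I|I-x)>0$. The delicate step---and the main obstacle of the theorem---is (I3). Given $I\in\I\sm\Imax$ and $I'\in\Imax$, I use (IM) to extend $I$ to a maximal $r$-independent subset $B$ of $I\cup I'$. If $B\supne I$ strictly, then any $x\in B\sm I\sub I'$ is as required, since $I+x\sub B\in\I$ and~(I2) applies. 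It remains to rule out $B=I$, i.e.\ that $I$ itself is maximal $r$-independent in $I\cup I'$.

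To do so I first prove a helper: if $I^*\in\I$ is maximal $r$-independent in $A\sub E$, then $r(A|I^*)=0$. For each $x\in A\sm I^*$, $r$-dependence of $I^*+x$ gives some $y\in I^*+x$ with $r(I^*+x|I^*+x-y)=0$; the case $y=x$ is immediate, and for $y\in I^*$ I combine (R3) along $I^*-y\sub I^*\sub I^*+x$ and along $I^*-y\sub I^*+x-y\sub I^*+x$, use $r(I^*|I^*-y)=1$ and (R1) to bound $r(I^*+x-y|I^*-y)\le 1$, and conclude $r(I^*+x|I^*)=0$; then (R4) yields $r(A|I^*)=0$. Applying the helper twice gives $r(I\cup I'|I)=0$ and $r(E|I')=0$; chaining (R3) along $I'\sub I\cup I'\sub E$ and $I\sub I\cup I'\sub E$ then forces $r(E|I)=0$. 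But $I\notin\Imax$ produces some $I^+\in\I$ properly containing~$I$, so for $y\in I^+\sm I$, (I2) puts $I+y$ in~$\I$, whence $r(I+y|I)=1$ and (R3) delivers $r(E|I)\ge r(I+y|I)>0$---the desired contradiction. Finally, to verify that the rank function $\tilde r$ associated with~$\I$ coincides with~$r$: iterating (R3) shows $r(I|J)=|I\sm J|$ for any $J\sub I$ in~$\I$ (each one-step increment being~$1$ by~(R1) and the $r$-independence of the intermediate sets), and combining this with the helper's $r(A|I)=r(B|J)=0$ via (R3) gives $r(A|B)=|I\sm J|=\tilde r(A|B)$ whenever $J$ and $I\supe J$ are chosen maximally as in Lemma~\ref{rwelldefined}.
\endproof
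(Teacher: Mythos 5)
Your proof is correct and follows essentially the same route as the paper's: the same identification of the $r$-independent sets with $\I$ via Lemma~\ref{rwelldefined}, and in part~(ii) your ``helper'' together with the final contradiction is exactly the paper's pair of claims $(*)$ and $(**)$, merely arranged contrapositively. The only (cosmetic) difference is that you prove (R2) and (R3) directly with nested maximal independent sets, where the paper first reduces to the cases $A\cap B=\es$ and $C=\es$ via Lemma~\ref{rminor}.
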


\proof
(i) Let  $\I\sub 2^E$ satisfy the independence axioms and let $r$ be the associated relative rank function. (R1) follows directly from the definition of $r$. We next show (R2), that $r(A\,|\,A\cap B)\geq r(A\cup B\,|\,B)$ for any $A, B\subseteq E$. By Lemma~\ref{rminor}, we may assume that $A\cap B=\emptyset$. By Lemma~\ref{rwelldefined}, there is a maximal set $J$ in $\I\cap 2^B$ and a maximal set $I\in \I\cap 2^{A\cup B}$ such that $J\subseteq I$ and $r(A\cup B\,|\,B)=|I\sm J|$. Then $I\sm J\in \I\cap 2^A$ by (I2), and hence $r(A\,|\,A\cap B)=r(A|\emptyset)\geq |I\sm J|=r(A\cup B\,|\, B)$, as required. When showing (R3), we may assume that $C=\emptyset$ by Lemma~\ref{rminor}. By Lemma~\ref{rwelldefined}, there is a maximal set $J$ in $\I\cap 2^B$ and a maximal set $I$ in $\I\cap 2^A$ so that $J\subseteq I$ and $r(A|B)=|I\sm J|$. Then $r(A|C)=|I|$ and $r(B|C)=|J|$ by definition of $r$, and (R3) follows. 

To prove (R4), consider a family $(A_\gamma)$ and a $B$ such that $B\sub A_\gamma\sub E$ for all~$\gamma$, and let $A:= \bigcup_\gamma A_\gamma$. Suppose $r(A|B)>0$. By Lemma~\ref{rwelldefined}, there is a maximal set $J$ in $\I\cap 2^B$ and a maximal set $I\in \I\cap 2^A$ so that $J\subseteq I$ and $r(A|B)=|I\sm J|$. Then $I\sm J\neq \emptyset$. As $I\subseteq A$, we have $(A_\gamma\cap I)\sm J\neq \emptyset$ and hence $r(A_\gamma|B)\geq |I\cap A_\gamma\sm B|>0$ for some $\gamma$, as required.

We next show for all $I\subseteq E$ that $I\in \I$ if and only if $I$ is $r$-independent. If $I\in \I$, then $r(I\,|\,I-x)>0$ for any $x\in I$ by definition of $r$, so $I$ is $r$-independent. Conversely, if $I\not\in \I$, there exists a maximal element  $J$ of $\I\cap 2^I$. Then $J\subne I$, and $r(I\,|\,I-x)=0$ for any $x\in I\sm J$, proving that $I$ is not $r$-independent. 

As the set of $r$-independent sets equals $\I$, (RM) follows from (IM).

(ii) Assume that $r\colon (2^E\times 2^E)_\subseteq\to \N\cup\{\infty\}$ satisfies the rank axioms, and let $\I$ be the set of $r$-independent sets. Then (I1) holds as the condition for $r$-independence is vacuously satisfied by the empty set. To prove (I2), consider an element $I\in \I$ and some $J\subseteq I$. If $J\not \in \I$, then $r(J\,|\,J-x)=0$ for some $x\in J$. Taking $A=J$ and $B=I-x$ in (R2), we have $r(J\,|\,J-x)\geq r(I\,|\,I-x)$. So $r(I\,|\,I-x)=0$, and $I\not\in \I$, a contradiction. 

Before we show (I3), we make two claims. First, for all $I\in\I$ and $x\in E\sm I$,
  $$I+x\in \I\Longleftrightarrow r(I+x\,|\,I)>0.\eqno(*)$$
The forward implication is immediate from the definition of~$\I$. For the converse implication, assume that $I+x\not\in \I$. Then there is a $y$ such that $r(I+x\,|\,I+x-y)=0$. Then by (R1) and (R3) we have
 $$r(I+x\,|\,I-y)=r(I+x\,|\,I+x-y)+r(I+x-y\,|\,I-y)\leq 1.$$
Applying (R3) again, we obtain
 $$1\ge r(I+x\,|\,I-y)=r(I+x\,|\,I)+r(I\,|\,I-y)\ge r(I\,|\,I-y)\ge 1$$
since $I\in\I$, so $r(I+x\,|\,I)\le 0$ as required.

The second claim is that, for all $X\sub E$ and $I\in \I\cap 2^X$,
$$I\text{ is maximal in } \I\cap 2^X\Longleftrightarrow r(X|I)=0.\eqno(**)$$
Indeed, if $I$ is a maximal element of $\I\cap 2^X$, then $r(I+x\,|\,I)=0$ for all $x\in X\sm I$, by~$(*)$. Taking $A_x:=I+x$ for all $x\in X\sm I$ and $B=I$ in (R4), we find that $r(X|I)=0$. Conversely, suppose that $I$ is not maximal in $\I\cap 2^X$. Then $r(I+x\,|\,I)>0$ for some $x\in X\sm I$, and $r(X|I)=r(X\,|\,I+x)+r(I+x\,|\,I)>0$ by (R3).

We now show (I3). Consider an $I\in\I\sm\Imax$ and an $I'\in\Imax$. By $(**)$ applied with $X=E$, we have $r(E|I)>0$ and $r(E|I')=0$. Applying (R3) twice,%
   \COMMENT{}
   we deduce
 $$0 < r(E\,|\,I) = r(E\,|\,I\cup I') + r(I\cup I'\,|\,I)\le r(E\,|\,I') + r(I\cup I'\,|\,I)
    = r(I\cup I'\,|\,I)\,. $$
Then $(**)$ applied with $X=I\cup I'$ yields that $I$ is not maximal in $\I\cap 2^{I\cup I'}$. Hence there is a set $I''\in  \I\cap 2^{I\cup I'}$ containing $I$ properly. Let $x\in I''\sm I$. Then by (I2), we have  $I+x\in \I$, as required.

(RM) states that (IM) holds for $\I$. This completes our proof that $\I$ satisfies the independence axioms. It remains to show that $r$ is the rank function associated with $\I$.\vadjust{\penalty-200} Let $r'$ be the rank function associated with $\I$, 
and consider $A\supseteq B$. By Lemma~\ref{rwelldefined}, there is a maximal element $I$ of $\I\cap 2^A$ and a maximal element $J$ of $\I\cap 2^B$ so that $J\subseteq I$ and $r'(A|B)=|I\sm J|$. As $I\in \I$, we have $r(I\,|\,I-x)=1$ for any $x\in I$, and $I-x\in \I$ by (I2). Hence, inductively if $I\sm J$ is finite, or by (R3) if it is infinite,%
   \COMMENT{}
   $r(I|J)=|I\sm J|$. Moreover, $r(A|I)=0$ and $r(B|J)=0$ by the maximality of $I$ resp. $J$. Again by~(R3), we deduce that
$$r(A|B) = r(A|B) + r(B|J) = r(A|J) = r(A|I) + r(I|J) = r(I|J)=|I\sm J|=r'(A|B),$$
as required. 
\endproof

\section{Alternative axiom systems and historical links}\label{alt}

In the late 1960s and early '70s, a number of researchers---including Bean, Higgs, Klee, Minty and Las Vergnas---responded to Rado's~\cite{Rado66matroids} challenge to develop a theory of non-finitary infinite matroids that would allow for the kind of duality known from finite matroids.%
   \COMMENT{}
   This resulted in a flurry of related but not easily compatible proposals of how such structures might be defined, of which Higgs's {\em B-matroids\/} were but one among many.%
   \footnote{Higgs himself studied various notions in parallel, including `C-matroids', `transitive spaces', `finitely transitive spaces', `dually transitive spaces', `exchange spaces' and `dually exchange spaces'---as well as two more general structures with duality that he calls `spaces' and `matroids'.}

It was only several years later that Oxley clarified the situation in two ways: he found a simple set of axioms that characterized Higgs's B-matroids~\cite{Oxley78}, and he showed that any theory of finite or infinite matroids with notions of duality and minors that defaulted to the existing finite notions when the structure was finite would have as its models some subclass of those B-matroids. In particular, the models of our matroid axioms proposed in Section~\ref{axioms} must be B-matroids. We shall prove in this section that they are all the B-matroids. Recall, however, that the tame matroids we introduced in Section~\ref{thin} form a smaller class of matroids that is also closed under duality and minors.

Our proof that infinite matroids, as introduced in this paper, are precisely the B-matroids in the sense of Higgs builds on Oxley's axiomatization of the latter: we show that our axiom systems from Section~\ref{axioms} are equivalent to Oxley's axioms for B-matroids. These are of `mixed type': they can be stated either in terms of independent sets or in terms of bases, but each version contains elements of the other. In one flavour, they are the four statements in Theorem~\ref{Bmatroids} below, with (IM) rephrased to fit our terminology:

\begin{theorem}\label{Bmatroids}
A set $\I\sub 2^E$ satisfies the independence axioms if and only if it satisfies the following four statements:
\begin{itemize}\itemsep=0pt
\item[\rm (I1)] $\emptyset\in\mathcal I$.
\item[\rm (I2)] $\I$ is closed under taking subsets.
\item[\rm (IB)] Whenever $X\subseteq E$, the sets $I_1, I_2\sub X$ are maximal elements of $\I\cap 2^X$, and $x\in I_1\sm I_2$, there exists an element $y\in I_2\sm I_1$ such that
$(I_1-x)+y$ is a maximal element of $\I\cap 2^X$.
\item[\rm (IM)] $\I$ satisfies~{\rm(M).}
\end{itemize}
\end{theorem}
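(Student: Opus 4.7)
My plan is to prove the two implications separately, leaning on the structural results about restrictions already established in Section~\ref{basics}.

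For the forward direction, I would show that (I3) (together with (I1), (I2), (IM)) implies (IB). Given $X\sub E$ and $I_1,I_2$ maximal in $\I\cap 2^X$ with $x\in I_1\sm I_2$, note that by Theorem~\ref{restrictions} the pair $(X,\I\cap 2^X)$ is itself a matroid whose bases are precisely the maximal elements of $\I\cap 2^X$. Thus $I_1,I_2$ are bases of this sub-matroid, and Theorem~\ref{Beq}(i) applied to it gives the basis axiom~(B2). Applying (B2) to $I_1$, $I_2$ and $x$ yields the desired $y\in I_2\sm I_1$ with $(I_1-x)+y$ a basis of the restriction, i.e.\ maximal in $\I\cap 2^X$.

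For the backward direction, (I1), (I2) and (IM) are immediate from the assumed list, so the only non-trivial task is to derive (I3). Given $I\in\I\sm\Imax$ and $I'\in\Imax$, I would first use (IM) with $X:=E$ to extend $I$ to a maximal element $\tilde B$ of $\I$; since $I\notin\Imax$ this inclusion is proper. Then I split into two cases. If $\tilde B\sub I\cup I'$, any $z\in\tilde B\sm I$ lies in $I'\sm I$, and $I+z\sub\tilde B\in\I$ gives $I+z\in\I$ by (I2). Otherwise pick some $x\in\tilde B\sm (I\cup I')$ and apply (IB) with $X:=E$, $I_1:=\tilde B$, $I_2:=I'$ and this $x\in\tilde B\sm I'$, obtaining $y\in I'\sm\tilde B\sub I'\sm I$ with $(\tilde B-x)+y\in\Imax$. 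Since $x\notin I$ we have $I\sub\tilde B-x$, so $I+y\sub(\tilde B-x)+y\in\Imax$, and therefore $I+y\in\I$ by (I2).

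The main obstacle I expect is the case analysis in the backward direction. A naive approach would be to extend $I$ maximally inside $I\cup I'$ and try to use (IB) there, but if $I$ is already maximal in $\I\cap 2^{I\cup I'}$ then (IB) only offers swaps of the form $(I-y)+z$, not the enlargement $I+z$ that (I3) demands. The trick that unlocks the argument is to first use (IM) with $X=E$ and then exploit an element $x\in\tilde B\sm(I\cup I')$, which lets (IB) leave $I$ untouched while exchanging into $I'$; the alternative case $\tilde B\sub I\cup I'$ is then handled trivially by (I2).
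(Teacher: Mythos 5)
Your proof is correct and follows essentially the same route as the paper: the forward direction invokes Theorem~\ref{restrictions} and Theorem~\ref{Beq}(i) exactly as the paper does, and your backward derivation of (I3) from (IB) with $X=E$ (i.e.\ (B2)), (IM) and (I2) is precisely the exchange argument the paper delegates to Theorem~\ref{Beq}(ii), merely inlined rather than cited.
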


\goodbreak

\proof
If $\I$ satisfies the independence axioms, then in particular it satisfies (I1), (I2) and~(IM). Statement (IB) is the base exchange axiom for restrictions, so it holds by Theorems \ref{restrictions} and~\ref{Beq}.%
   \COMMENT{}

Conversely, if $\I$ satisfies the above four statements, then $\Imax$ satisfies the base axioms: (B1)~follows from (I1) and~(IM); (B2)~is the case $X=E$ of~(IB); and (BM) follows from (IM) and~(I2), since these imply that $\I = \dcl{\Imax}$.
\endproof

Given the `exchange' nature of axiom~(IB), it may seem that the four statements above are better rephrased in terms of bases. And indeed, Oxley~\cite{Oxley78}%
   \COMMENT{}
   notes such a translation: a set $\B\sub 2^E$ is the set of bases of a B-matroid if and only if it satisfies (B1), (BM), and~(IB) with $\I:=\dcl{\B}$. These, then, differ from our base axioms only in that they require our exchange axiom (B2) explicitly for all restrictions to subsets $X$ of~$E$. This strengthening makes it necessary to invoke a notion of independent sets, since the `bases' of $M|X$ for which (IB) says that (B2) should hold are defined as the maximal subsets of $X$ in~$\dcl{\B}$. Thus, whichever way we choose to present these axioms, the presentation will involve both elements of base exchange and of independence. Divorcing these into separate sets of independence and base axioms, as we have done in Section~\ref{axioms}, made it necessary to prove that requiring (B2) for all restrictions is in fact redundant in the presence of~(BM)---which we did in our Theorems \ref{restrictions} and~\ref{Beq}.

\medbreak

As a common feature, all our axiom systems so far have included the explicit requirement (M) that every independent set extends to a maximal one---not only in the whole matroid but inside any given $X\sub E$. This is a strong statement, and not always easy to verify in practice.%
   \COMMENT{}
   We therefore tried to replace it with weaker axioms, such as one requiring merely that every set $X\sub E$ must have {\em some} maximal independent subset.

We succeeded in doing this for the independence, the base, and the rank axioms, at the expense of having to strengthen the other axioms a little (see below). For the circuit and the closure axioms we found no natural strengthening that would allow a similar substantial weakening of the (M) axiom.

Let us rephrase the independence axioms first:

\begin{theorem}\label{Ialtdel}
A set $\I\sub 2^E$ satisfies the independence axioms if and only if it satisfies the following three statements:
\begin{itemize}\itemsep=0pt
\item[\rm (I1$'$)] Every set $X\sub E$ has a subset that is maximal in $\I\cap 2^X$.\COMMENT{}\<{(I1d)}
\item[\rm (I2)] $\I$ is closed under taking subsets.
\item[\rm (I3$'$)]
   For\<{(I3c)} all $I\in\I$ and $I'\in\Imax$ there is a $B\in\Imax$ such that $I\sub B\sub I\cup I'$.
\end{itemize}
\end{theorem}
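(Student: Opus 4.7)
The forward implication is immediate: under the independence axioms, (I1$'$) is the $I=\es$ case of~(IM) (using $\es\in\I$ from~(I1)), (I2) is verbatim, and (I3$'$) was already derived at the start of Section~\ref{basics} from (IM) and~(I3). So all the substance lies in the converse.

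For the converse, assume (I1$'$), (I2), (I3$'$). Then (I1) is (I1$'$) applied to $X=\es$. For (I3), given $I\in\I\sm\Imax$ and $I'\in\Imax$, (I3$'$) yields $B\in\Imax$ with $I\sub B\sub I\cup I'$; since $I\subne B$, any $x\in B\sm I\sub I'\sm I$ gives $I+x\sub B\in\I$, so $I+x\in\I$ by~(I2). What remains, and is the genuinely nontrivial part, is to derive~(IM).

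For (IM), given $I\sub X\sub E$ with $I\in\I$, my plan is a four-step construction. First, (I1$'$) applied to~$X$ and to~$E$ gives $J$ maximal in $\I\cap 2^X$ and some $B_0\in\Imax$. Second, (I3$'$) applied to~$J$ and~$B_0$ yields $B\in\Imax$ with $J\sub B\sub J\cup B_0$; the maximality of $J$ in $\I\cap 2^X$ together with~(I2) forces $B\cap X=J$, since any $y\in(B\cap X)\sm J$ would give $J+y\sub B\in\I$ and contradict maximality. Third, (I3$'$) applied to~$I$ and~$B$ produces $B'\in\Imax$ with $I\sub B'\sub I\cup B$, whence $I\sub B'\cap X\sub I\cup J$ and, because $I\sub X$, also $B'\sm X\sub B\sm X$. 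The candidate witness for~(M) is then~$B'\cap X$.

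It remains to show that $B'\cap X$ is maximal in $\I\cap 2^X$; my plan is to route through Lemma~\ref{cobases}, noting that its proof uses only~(I3$'$) and the definitions of $\B^*$ and $\I^*=\dcl{\B^*}$, so it is available under our weaker hypotheses. The forward direction of Lemma~\ref{cobases} applied to~$B$ yields $\Bbar\cap\Xbar$ maximal in $\I^*\cap 2^{\Xbar}$; the containment $\overline{B'}\cap\Xbar\supe\Bbar\cap\Xbar$ (from $B'\sm X\sub B\sm X$), combined with $\overline{B'}\cap\Xbar\in\I^*$ by down-closure of $\I^*$, forces equality, so $\overline{B'}\cap\Xbar$ is also maximal in $\I^*\cap 2^{\Xbar}$. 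The converse direction of Lemma~\ref{cobases} then delivers $B'\cap X$ maximal in $\I\cap 2^X$, completing the proof. The obstacle I anticipate is the converse direction of Lemma~\ref{cobases}, which the paper argues ``by taking complements'', tacitly invoking Theorem~\ref{duality} and hence~(IM); to sidestep the circularity I plan a direct contrapositive, noting that if $B'\cap X$ failed to be maximal then some $y\in X\sm B'$ would satisfy $(B'\cap X)+y\in\I$, and (I3$'$) applied to $(B'\cap X)+y$ and $B'$ would give a basis $B''\sub B'+y$ with $B''\sm X\subne B'\sm X$ (strict because $B'\in\Imax$ rules out $B''=B'+y$), producing a strictly larger element $\overline{B''}\cap\Xbar\supne\overline{B'}\cap\Xbar$ of $\I^*\cap 2^{\Xbar}$ and contradicting the maximality established above.
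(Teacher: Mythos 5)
Your proof is correct, and its skeleton coincides with the paper's: obtain from (I1$'$) a maximal element $J$ of $\I\cap 2^X$ and a basis of~$E$, build a basis $B$ with $B\cap X=J$, extend $I$ into $B$ by (I3$'$), and transfer maximality across complements via Lemma~\ref{cobases}. (The paper packages these steps as a re-proof of Lemma~\ref{restr} under the weakened hypotheses, from which (IM) then follows by (I1$'$); you unfold the same construction directly.) The one genuine divergence --- and you correctly identified it as the delicate point --- is the backward direction of Lemma~\ref{cobases}: the paper's ``by taking complements'' needs (I3$'$) for~$\I^*$, which is why its proof of this theorem begins by copying the first two paragraphs of the proof of Theorem~\ref{duality} to establish that $\I^*$ satisfies~(I3$'$). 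Your substitute --- the contrapositive producing $B''\in\Imax$ with $(B'\cap X)+y\sub B''\sub B'+y$, hence $B''\sm X\subne B'\sm X$ (strict because $B'+y\notin\I$), contradicting the maximality of $\overline{B'}\cap\Xbar$ in $\I^*\cap 2^{\Xbar}$ --- uses (I3$'$) only for~$\I$, so the dual enters your argument merely as notation for complements of bases. That is a small but real economy: you never have to re-establish duality under the weakened axioms, at the cost of proving only the single instance of restricted maximality you need rather than the full statement of Lemma~\ref{restr}. All the individual steps check out, including the forced equality $B\cap X=J$ in your second step, the containments $I\sub B'\cap X$ and $B'\sm X\sub B\sm X$ in the third, and your observation that the forward direction of Lemma~\ref{cobases} uses only (I2) and (I3$'$) for~$\I$.
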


\proof
 Suppose first that $\I$ satisfies the independence axioms: statements (I1), (I2), (I3) and~(IM) from Section~\ref{axioms}. At the start of Section~\ref{basics} we proved that these imply~(I3$'$), and (I1$'$) follows from (IM) with $I:=\es$.

Conversely, assume that $\I$ satisfies (I1$'$), (I2) and~(I3$'$). Axiom (I1) follows from (I1$'$) and~(I2), and (I3) is a weakening of~(I3$'$). To prove~(IM), we begin by re-proving the statement of Lemma~\ref{restr} in Section~\ref{basics}, replacing the use of (IM) in that proof with suitable applications of (I1$'$) and~(I3$'$). By~(I1$'$), the assertion of Lemma~\ref{restr} will then imply~(IM).

We begin by copying the first two paragraphs of the proof of Theorem~\ref{duality}, to show that~$\I^*$, defined as before Theorem~\ref{duality}, satisfies~(I3$'$). (The proof there assumes that the given set $I^*$ is not in~$\B^*$; but if it is, there is nothing to show since $\B^* = \I^{*\rm max}$.) Next, we establish the assertion of Lemma~\ref{cobases} by copying its proof; this uses (I3$'$) for both $\I$ and~$\I^*$,%
   \COMMENT{}
   but it does not use~(IM). Finally, we copy the proof of Lemma~\ref{restr} itself. This proof uses (IM) for $X=E$ in the second line. Instead, we use (I1$'$) with $X:=E$ to find some set $\hat B\in\B$, and then apply (I3$'$) to extend the given set $I'$ into $\hat B$ to the desired set $B\in\B$ (where $I'\sub B\sub I'\cup\hat B$).
   \endproof

We remark that (I1$'$) is weakest possible with the property of completing (I2) and (I3$'$) to a full set of independence axioms. Indeed, since the set of finite subsets of an infinite set satisfies (I2) and (I3$'$) but does not define a matroid, we need to require the existence of a maximal set at least in all of~$E$. Moreover, we want restrictions $M|X$ of a matroid $M$ to be matroids, but the existence of maximal independent sets is not hereditary even in the presence of (I2) and~(I3$'$); see Example~\ref{example2} in Section~\ref{non-matroids}. We thus have to require (I1$'$) as stated.

However, there is an interesting alternative to (I1$'$), which we mention without proof.%
   \COMMENT{}
   Rather than requiring that in every restriction there is a maximal independent set, we may instead prescribe this for every contraction (cf.\ Corollary~\ref{CoindependentSets}): Theorem~\ref{Ialtdel} remains valid if we replace its statement (I1$'$) with

\begin{itemize}
\item[\rm (I1$''$)] Every set $X\sub E$ has a maximal subset $I$ such that $I\cup I'\in\I$ for every $I'\in \I\cap 2^\Xbar$.
\end{itemize}

Next, an alternative set of base axioms. Unlike~(B2), the alternative exchange axiom (B2$'$) does not imply that $\B$ is an antichain, so we have to add this as a new requirement~(B0):

\begin{theorem}\label{Balt}
A set $\B\sub 2^E$ satisfies the base axioms if and only if $\B$ satisfies the following three statements:\vskip-3pt\vskip-3pt
\begin{itemize}\itemsep=0pt
\item[\rm (B0)] No element of $\B$ is a subset of another.\COMMENT{}
\item[\rm (B1$'$)] For every $X\sub E$ there is a $B\in\B$ such that $B\cap X$ is maximal in $\dcl{\B}\cap 2^X$.\<{(B1d)}
\item[\rm (B2$'$)] Whenever\<{(B3a)} $B_1,B_2\in\B$ and $F_1\sub B_1\sm B_2$, there exists $F_2\sub B_2\sm B_1$ such that $(B_1\sm F_1)\cup F_2\in\B$.\COMMENT{}
\end{itemize}
\end{theorem}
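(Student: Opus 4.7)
My plan is to prove both directions by bridging to the already-established axiomatic systems. For the forward direction, I would assume $\B$ satisfies the basis axioms; by Theorem~\ref{Beq}, $\I := \dcl\B$ then satisfies the independence axioms and $\B = \Imax$, so (B0) is immediate. For (B1$'$), I would apply (IM) with $I := \es$ to obtain a set $I_X$ maximal in $\I \cap 2^X$, then apply (IM) again (with $X := E$) to extend $I_X$ to a basis $B$; maximality of $I_X$ forces $B \cap X = I_X$. For (B2$'$), given $B_1, B_2 \in \B$ and $F_1 \sub B_1 \sm B_2$, I would set $I := B_1 \sm F_1 \in \I$ and use (I3$'$) (derived at the start of Section~\ref{basics}) to find $B \in \B$ with $I \sub B \sub I \cup B_2$; then $F_2 := B \sm I$ lies in $B_2 \sm B_1$ (the disjointness from $B_1$ uses $F_1 \sub B_1 \sm B_2$) and satisfies $B = (B_1 \sm F_1) \cup F_2$ as required.

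For the backward direction, the strategy is to show that $\I := \dcl\B$ satisfies the alternative independence axioms (I1$'$), (I2), (I3$'$) of Theorem~\ref{Ialtdel}, and then to recover the basis axioms. Axiom (B0) immediately yields $\B = \Imax$ by two routine maximality arguments; (I2) is trivial; and (I1$'$) is a direct consequence of (B1$'$). Once the independence axioms are in place, (B1) follows from (I1) together with (IM), (B2) is the special case $F_1 = \{x\}$ of (B2$'$), and (BM) is just (IM) restated, using $\B = \Imax$.

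The main obstacle is proving (I3$'$): given $I \in \I$ and $I' \in \Imax = \B$, I need a $B \in \B$ with $I \sub B \sub I \cup I'$. My plan is to pick some basis $B^* \supseteq I$ (which exists since $I \in \dcl\B$) and invoke (B2$'$) with $B_1 := B^*$, $B_2 := I'$, and the crucial choice $F_1 := B^* \sm (I \cup I')$ --- that is, removing from $B^*$ precisely those elements lying in neither $I$ nor $I'$. This $F_1$ is contained in $B^* \sm I'$, so (B2$'$) applies; the resulting basis $(B^* \sm F_1) \cup F_2$ simplifies to $I \cup (B^* \cap I') \cup F_2$, which contains $I$ and is contained in $I \cup I'$ because $F_2 \sub I' \sm B^* \sub I'$. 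Hitting upon this particular $F_1$, rather than something more symmetric, is the one step that requires genuine thought, and is what makes (B2$'$) together with (B1$'$) strong enough to substitute for the full force of the original axioms.
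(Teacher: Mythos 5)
Your argument follows essentially the same route as the paper's: the forward direction reduces (B2$'$) to the extension property (I3$'$) made available through Theorem~\ref{Beq}, and the backward direction passes through the alternative independence axioms of Theorem~\ref{Ialtdel}, with exactly the paper's key choice $F_1 := B_1\sm(I\cup I')$ in the verification of~(I3$'$). The identification $\B=\Imax$ from~(B0), the derivation of (I1$'$) from~(B1$'$), and the forward verification of~(B2$'$) by extending $B_1\sm F_1$ maximally inside $(B_1\sm F_1)\cup B_2$ all match the published proof in substance.

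There is, however, one step that fails as written: your claim that ``(B2) is the special case $F_1=\{x\}$ of (B2$'$)''. Applying (B2$'$) with $F_1=\{x\}$ produces a set $F_2\sub B_2\sm B_1$ with $(B_1-x)\cup F_2\in\B$, but nothing in (B2$'$) forces $F_2$ to be a singleton, whereas (B2) demands a single element $y$ with $(B_1-x)+y\in\B$. This is precisely the respect in which (B2$'$) is weaker than (B2): the paper notes just before the theorem that, unlike~(B2), the axiom (B2$'$) does not even force $\B$ to be an antichain, which is why (B0) has to be added. The gap is easily repaired from material you have already established. Once $\I=\dcl{\B}$ satisfies the independence axioms and $\B=\Imax$, Theorem~\ref{Beq}\thinspace(i) hands you (B1), (B2) and (BM) for $\B$ directly, with no further appeal to (B2$'$)---this is what the paper does. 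Alternatively, Lemma~\ref{card} applied to the bases $B_1$ and $(B_1-x)\cup F_2$, whose differences are $\{x\}$ and $F_2$, shows $|F_2|=1$, so your specialization does go through, but only after that extra argument, not ``immediately''.
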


\proof
Suppose first that $\B$ satisfies the base axioms: statements (B1), (B2) and~(BM) from Section~\ref{axioms}. (B2)~implies~(B0), and (BM) implies~(B1$'$). To prove~(B2$'$), let $B_1$, $B_2$ and $F_1$ be given as stated. Use (BM) to extend $I:= B_1\sm F_1$ to a maximal subset $B$ of $X:= I\cup B_2$ in~$\dcl{\B}$. We show that $B\in\B$; then (B2$'$) holds with $F_2:= B\sm I$.%
   \COMMENT{}

Suppose $B\notin\B$. Since $B\in\dcl{\B}$, there exists a set $B'\in\B$ such that $B\subne B'$. Note that $B'\sm B\sub \overline{I\cup B_2}$, by the maximality of~$B$ in $\dcl{\B}\cap 2^{I\cup B_2}$. Pick $x\in B'\sm B$. Applying (B2) to $B'$ and~$B_2$, we find an element $y\in B_2\sm B' =  B_2\sm B$ such that $(B'-x)+y\in\B$. This contradicts the maximality of $B$ in $\dcl{\B}\cap 2^{I\cup B_2}$.

Conversely, assume that $\B$ satisfies (B0), (B1$'$) and~(B2$'$). We shall prove that $\I:=\dcl{\B}$ satisfies statements (I1$'$), (I2) and~(I3$'$) from Theorem~\ref{Ialtdel}. Then, by that theorem, $\I$~satisfies the independence axioms. By Theorem~\ref{Beq}, then, $\Imax$~will satisfy the base axioms and $\I = \dcl{\Imax}$.%
   \COMMENT{}
   Hence $\dcl{\B}=\I=\dcl{\Imax}$, which implies $\B=\Imax$ since $\Imax$ and $\B$ are both antichains (by definition and by~(B0), respectively). So $\B$ satisfies the base axioms,%
   \COMMENT{}
   as desired.%
   \COMMENT{}

So let us prove that $\I:=\dcl{\B}$ satisfies (I1$'$), (I2) and~(I3$'$). Statement (I1$'$) is just (B1$'$) for $\I=\dcl{\B}$. Statement (I2) is immediate from $\I=\dcl{\B}$. For the proof of~(I3$'$), let $I$ and $I'$ be given as stated. Since $I\in\I=\dcl{\B}$, there exists a set $B_1\in\B$ such that $I\sub B_1$. Applying (B2$'$) to $B_1$ and $B_2:= I'\in\Imax\sub\B$ with $F_1 := B_1\sm (I\cup I')$, we find a set $F_2\sub I'\sm B_1$ such that $(B_1\sm F_1)\cup F_2\in\B$. For $B:= (B_1\sm F_1)\cup F_2$ we now have $I\sub B\sub I\cup I'$, as required for~(I3$'$).
 \endproof

For our alternative rank axioms we need to define a notion of $r$-independent sets. We do so as in~(RM). Thus, given any function ${r\colon (2^E\times 2^E)_\subseteq\to \N\cup\{\infty\}}$, a set $I\sub E$ is {\em $r$-independent\/} if $r(I|I-x)>0$ for every $x\in I$.

\begin{theorem}\label{Ralt}
A function $r\colon (2^E\times 2^E)_\subseteq\to \N\cup\{\infty\}$ satisfies the rank axioms if and only if $r$ satisfies the following four statements:\vskip-3pt\vskip-3pt
\begin{itemize}\itemsep=0pt
\item[\rm (R1)] For all $B\sub A\sub E$ we have $r(A|B)\le |A\sm B|$.
\item[\rm (R2)] For all $A,B\sub E$ we have $r(A|A\cap B)\ge r(A\cup B|B)$.
\item[\rm (R3)] For all $C\sub B\sub A\sub E$ we have $r(A|C) = r(A|B) + r(B|C)$.
\item[\rm (R4$'$)] For all $B\sub A\sub E$ there exist $r$-independent sets $J\sub I$ such that $J\sub B$ with $r(B|J)=0$ and $I\sub A$ with $r(A|I)=0$.%
   \COMMENT{}
\end{itemize}
\end{theorem}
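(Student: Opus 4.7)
The plan is to prove the two implications separately; throughout, write $\I$ for the set of $r$-independent sets.

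\emph{Forward direction.} Assume (R1)--(R3), (R4), (RM). By Theorem~\ref{Req}(ii), $\I$ satisfies the independence axioms and $r$ is the associated relative rank function. Given $B\sub A\sub E$, Lemma~\ref{rwelldefined} supplies a maximal element $J$ of $\I\cap 2^B$ and a maximal element $I\supe J$ of $\I\cap 2^A$, both $r$-independent with $J\sub I$. Applying the same lemma to the pair $J\sub B$ yields $r(B|J)=|J\sm J|=0$ (the only maximal element of $\I\cap 2^J$ is $J$, which is already maximal in $\I\cap 2^B$), and analogously $r(A|I)=0$. These are exactly the witnesses required by (R4$'$).

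\emph{Converse direction.} Assume (R1), (R2), (R3), (R4$'$); I need to derive (R4) and (RM). Two facts do most of the work. \emph{Fact A} (closure of $\I$ under subsets): if $J\sub I\in\I$ and some $x\in J$ had $r(J|J-x)=0$, then (R2) applied to $A:=J$, $B:=I-x$ would force $r(I|I-x)\le r(J|J-x)=0$, contradicting $I\in\I$. \emph{Fact B}: if $J\sub I$ are both in $\I$, then $r(I|J)=|I\sm J|$. Indeed (R1) and $r$-independence give $r(I|I-x)=1$ for each $x\in I$; an induction combining (R3) with Fact~A extends this to $r(I|I\sm F)=|F|$ for every finite $F\sub I$; and if $I\sm J$ is infinite, choosing $F\sub I\sm J$ of arbitrary finite size together with (R3) gives $r(I|J)\ge|F|$, hence $r(I|J)=\infty$.

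To derive (R4), given $(A_\gamma)$ with $B\sub A_\gamma$ and $r(A_\gamma|B)=0$, set $A:=\bigcup_\gamma A_\gamma$ and apply (R4$'$) to $B\sub A$ to obtain $r$-independent $J\sub I$ with $J\sub B$, $r(B|J)=0$, $I\sub A$, $r(A|I)=0$. Two applications of (R3) give $r(A|B)=r(A|J)=r(I|J)$. If some $x\in I\sm J$ existed, pick $\gamma$ with $x\in A_\gamma$; then Fact~A gives $J+x\in\I$, Fact~B gives $r(J+x|J)=1$, and (R3) along $J\sub J+x\sub A_\gamma$ yields $r(A_\gamma|J)\ge 1$, while (R3) also gives $r(A_\gamma|J)=r(A_\gamma|B)+r(B|J)=0$, a contradiction. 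Hence $I=J$ and $r(A|B)=0$. For (RM), given $I'\in\I$ with $I'\sub X$, apply (R4$'$) to $I'\sub X$: Fact~B forces the witness $J$ to equal $I'$, so $I'\sub I\in\I\cap 2^X$; and if $y\in X\sm I$ satisfied $I+y\in\I$, then (R3) together with Fact~B would give $r(X|I)\ge r(I+y|I)=1$, contradicting $r(X|I)=0$. Thus $I$ is a maximal $r$-independent superset of $I'$ inside $X$, as required.

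The main obstacle is extracting (R4) from the single-pair witness produced by (R4$'$): the $J$ and~$I$ depend on the pair $B\sub A$ alone, not on the individual~$A_\gamma$'s, so one must use Facts~A and~B and the additivity in~(R3) to transport a hypothetical $x\in I\sm J$ into a conflict with $r(A_\gamma|B)=0$ for the single index $\gamma$ that contains~$x$.
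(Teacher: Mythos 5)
Your proof is correct and follows essentially the same route as the paper's: (R4$'$) is witnessed in the forward direction by maximal independent subsets supplied by (RM), and in the converse direction both (R4) and (RM) are extracted from the (R4$'$)-witnesses by using (R2) and (R3) to transport a positive rank increment $r(J+x\,|\,J)>0$ for $x\in I\sm J$ into a single $A_\gamma$. Your Facts~A and~B just make explicit the subset-closure and $r(I|J)=|I\sm J|$ computations that the paper performs inline (and in the proof of Theorem~\ref{Req}(ii)), and your forward direction reaches the same witnesses via Theorem~\ref{Req}(ii) and Lemma~\ref{rwelldefined} rather than citing (R4) directly.
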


\begin{proof} Let assume first that $r$ satisfies the rank axioms and prove~(R4$'$). Let $B\sub A\sub E$ be given. By~(RM), $B$~has a maximal $r$-independent subset~$J$, which extends by (RM) to a maximal $r$-independent subset $I$ of~$A$. Then $r(B|J)=0$ and $r(A|I)=0$ by (R4) and the maximalities of $J$ and~$I$.%
   \COMMENT{}

Assume now that $r$ satisfies (R1), (R2), (R3), (R4$'$). We will show that $r$ satisfies (R4) and (RM).

We first prove (R4). Suppose $A=\bigcup_{\gamma} A_\gamma$ and $B\sub A_\gamma$ for all $\gamma$ are such that $r(A|B)>0$. By (R4$'$), there exist $r$-independent sets $J\sub I$ such that $J\sub B$ with  $r(B| J)=0$ and $I\sub A$ with $r(A|I)=0$. Applying (R3) twice, we see that $r(I|J)=r(A|J)=r(A|B)>0$. Then $|I\sm J| > 0$ by~(R1); pick $x\in I\sm J$. Applying (R2) with $A'=J+x$ and $B'=I-x$,%
   \COMMENT{}
   we obtain $r(J+x\,|\,J)\geq r(I\,|\,I-x)>0$.  
As $x\in A$, we have $x\in A_{\gamma} $ for some $\gamma$. Now $r(A_{\gamma}|B)\geq r(A_{\gamma}|J)= r(A_{\gamma}|\,J+x)+r(J+x\,|\,J)>0$, as required.%
   \COMMENT{}

We now prove (RM). Let $J$ be an $r$-independent subset of some set $X\sub E$. Applying (R4$'$) with $A:=X$ and $B:=J$, we find $r$-independent sets $J'\sub I$ such that $J'\sub J$ with $r(J|J')=0$, and hence $J'=J$ by~(R3), and $I\sub X$ with $r(X,I)=0$. Again by~(R3), $I$~is a maximal $r$-independent subset of~$X$.
\end{proof}

The fact, shown above, that (R4$'$) implies both~(RM) (which is basically the special case of (R4$'$) that $B$ is $r$-independent) and~(R4) raises the question of whether (RM) might also imply~(R4)---in which case we could scrap (R4) in our rank axioms. A~simple example in Section~\ref{non-matroids} will show that it does not. The reason for why (R4$'$) is genuinely stronger than (RM) lies in the fact that, in the presence of~(R3), every $r$-independent subset $I$ of $X$ satisfying $r(X|I)=0$ must be maximally $r$-independent, but conversely a maximally $r$-independent subset $I$ of~$X$ does not automatically satisfy $r(X|I)=0$: the fact that is does, when $X\sm I$ is infinite, is precisely~(R4).

\medbreak

As mentioned earlier, we have no alternative systems of closure axioms. Oxley~\cite{Oxley92} proved that B-matroids are characterized by the closure axioms given in Section~\ref{axioms}, except that he replaced the usual (and our) axiom (CL4) by the stronger axiom
\begin{itemize}
\item[\rm (CL4$'$)] For all $Z\sub X\sub E$ and $y\in \cl(X)\sm \cl(Z)$ there exists an $x\in X\sm Z$ such that $x\in \cl((X-x)+y)$.
\end{itemize}
(For $|X\sm Z|=1$, axiom (CL4$'$) yields axiom~(CL4).)

The reason Oxley replaced (CL4) with (CL4$'$) was historical. Klee~\cite{Klee71}, in his own response to Rado's~\cite{Rado66matroids} challenge, had considered `C-matroids',%
   \COMMENT{}
   the systems of $\cl$-independent sets for functions $\cl\colon 2^E\to 2^E$ satisfying (CL1--3) and~(CL4$'$). Oxley~\cite{OxleyThesis}%
   \COMMENT{}
   had shown that these axioms are not strong enough to define a B-matroid,%
   \footnote{Here is another simple example. Let $\cl$ map every finite subset of $E$ to itself, and every infinite subset to~$E$. Then $\cl$ satisfies (CL1--3) and~(CL4$'$), but the \cl-independent sets are just the finite subsets of~$E$, which fail to satisfy~(M) for $X=E$.}
   and remedied this defect by adding~(CLM).

In the presence of~(CLM), however, the strengthening of (CL4) to (CL4$'$) becomes obsolete, because the first implies the second:

\begin{proposition}
If $\cl\colon 2^E\to 2^E$ satisfies the closure axioms, it even satisfies~{\rm(CL4$'$).}
\end{proposition}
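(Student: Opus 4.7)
The plan is to invoke the matroid perspective: by Theorem~\ref{CLeq}, the set $\I$ of $\cl$-independent sets forms the independent sets of a matroid whose associated closure operator is exactly~$\cl$, so we can use both (CLM) as an (IM)-style maximal-extension tool and Lemma~\ref{FactE} to pass between closures of sets and closures of their maximal independent subsets.

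Given $Z\sub X\sub E$ and $y\in\cl(X)\sm\cl(Z)$, I would first pick a maximal $\cl$-independent subset $I_Z$ of $Z$ via~(CLM). By Lemma~\ref{FactE}, $\cl(I_Z)=\cl(Z)$, so $y\notin\cl(I_Z)$, which means $I_Z+y\in\I$. Now extend $I_Z$ to a maximal $\cl$-independent subset $I_X$ of~$X$ via (CLM) (using that the matroid axioms are available). Again by Lemma~\ref{FactE}, $\cl(I_X)=\cl(X)$, so $y\in\cl(I_X)$ and therefore $I_X+y\notin\I$. One more application of (CLM) extends $I_Z+y$ to a maximal element $B$ of $\I\cap 2^{I_X+y}$. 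Since $I_X+y$ is itself $\cl$-dependent, $B\subsetneq I_X+y$, and because $y\in B$ we can pick $x\in I_X\sm B$.

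Two things need to be checked. First, $x\notin Z$: otherwise $I_Z+x\sub I_X$ would be a $\cl$-independent subset of~$Z$ properly larger than~$I_Z$, contradicting the maximality of~$I_Z$. So $x\in X\sm Z$ as required by~(CL4$'$). Second, $x\in\cl((X-x)+y)$: by the maximality of $B$ we have $B+x\notin\I$ while $B\in\I$, so the argument marked $(*)$ in the proof of Theorem~\ref{CLeq}(ii)—which is precisely the place where (CL4) is invoked—yields $x\in\cl(B)$. Finally $B\sub (I_X+y)-x=(I_X-x)+y\sub(X-x)+y$ (using $x\ne y$), so (CL2) gives $x\in\cl(B)\sub\cl((X-x)+y)$.

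I expect the main obstacle, conceptually, to be the same one that (CLM) is designed to overcome: in finite matroids one would simply enumerate $I_X\sm I_Z$ and take the first element whose addition to $I_Z+y$ destroys independence; the (CL4) exchange then produces the required~$x$. When $I_X\sm I_Z$ is infinite, that first-failure argument breaks down, and the trick is to replace it with a single application of (CLM) inside $I_X+y$—this directly manufactures a maximal $\cl$-independent $B\sub I_X+y$ containing $I_Z+y$, and any element of $I_X\sm B$ then serves as the required~$x$. The rest is routine bookkeeping with (CL2), (CL4) and Lemma~\ref{FactE}.
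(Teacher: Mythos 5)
Your argument is correct in substance but takes a genuinely different route from the paper. The paper's proof of this proposition is a two-line reduction: by Theorem~\ref{CLeq}\thinspace(ii) the $\cl$-independent sets satisfy the independence axioms with $\cl$ as associated closure operator, hence form a B-matroid by Theorem~\ref{Bmatroids}, and (CL4$'$) is then quoted from Oxley's work on B-matroids. You instead give a self-contained proof from the machinery of Sections~\ref{basics} and~\ref{eq}: pass to maximal independent subsets $I_Z\sub I_X$ of $Z\sub X$ via (M), transfer the hypothesis $y\in\cl(X)\sm\cl(Z)$ to these sets via Lemma~\ref{FactE}, and manufacture the exchange element by one further application of (M) inside $I_X+y$. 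The steps check out: $I_Z+y\in\I$ because $y\notin\cl(I_Z)=\cl(Z)$; any $x\in I_X\sm B$ lies outside $Z$ by the maximality of $I_Z$ in $\I\cap 2^Z$; and $x\in\cl(B)\sub\cl((X-x)+y)$ by the statement $(\ast)$ from the proof of Theorem~\ref{CLeq}\thinspace(ii) together with (CL2). Replacing the finite ``first element whose addition destroys independence'' scan by a single appeal to (M) is exactly the right move. Your proof buys independence from the external citation and shows explicitly where (CLM) and (CL4) enter; the paper's buys brevity.

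One degenerate case slips through. You infer ``$y\in\cl(I_X)$, therefore $I_X+y\notin\I$'', and this fails precisely when $y\in I_X$ --- which can happen when $y\in X$, since nothing in your construction keeps $y$ out of the maximal extension $I_X$ of $I_Z$ in $X$. In that case $B=I_X+y=I_X$ and there is no $x\in I_X\sm B$ to pick. The case is trivial, though: $y\in\cl(X)\sm\cl(Z)$ and $Z\sub\cl(Z)$ give $y\in X\sm Z$, and then $x:=y$ satisfies $x\in\cl((X-x)+y)=\cl(X)$ directly. Dispose of the case $y\in X$ this way at the outset; for $y\notin X$ we have $y\notin I_X$, and your argument runs as written.
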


\proof 
By Theorem~\ref{CLeq}\thinspace (ii), the set $\I$ of $\cl$-independent sets satisfies the independence axioms, and hence defines a B-matroid by Theorem~\ref{Bmatroids}. The closure operator associated with~$\I$, which by Theorem~\ref{CLeq}\thinspace (ii) is the function~\cl, satisfies (CL4$'$) by Oxley~\cite[Prop.\ 3.2.8]{Oxley92}. 
\endproof

Although we have no system of alternative circuit axioms without~(CM), there is a system of axioms for both circuits and cocircuits that achieves this, at least for countable matroids. These axioms, stated below, extend Minty's finite matroids axioms and are due to Bowler and Carmesin~\cite{BC13:Determinacy}.

Let us call these the {\em orthogonality axioms\/} for infinite matroids. Think of $\Ccal$ as the set of circuits, and $\Dcal$ as the set of cocircuits:

\begin{theorem}\label{Oaxioms}{\rm \cite{BC13:Determinacy}}
If $E$ is countable, two sets $\Ccal,\Dcal\sub 2^E$ are the sets of circuits and cocircuits of a matroid on $E$ if and only if they satisfy the following:\vskip-3pt\vskip-3pt
\begin{itemize}\itemsep=0pt
	\item[\rm (C1)] $\emptyset\notin \Ccal$
Ê Ê Ê Ê \item[\rm (C1$^*$)] $\emptyset\notin \Dcal$
	\item[\rm (C2)] No element of $\Ccal$ is a subset of another. Ê
Ê Ê Ê Ê \item[\rm (C2$^*$)] No element of $\Dcal$ is a subset of another.
Ê Ê Ê Ê \item[\rm (O1)] $|C\cap D|\neq 1$ for all $C\in \Ccal$ and $D\in \Dcal$.Ê
Ê Ê Ê Ê \item[\rm (O2)] For all partitions $E=P\cup Q\cup \{e\}$
either $P+e$ includes an element of $\Ccal$ containing~$e$ or
$Q+e$ includes an element of $\Dcal$ containing~$e$.
\item[\rm (O3)]For all $e\in C\in \Ccal$ and $X\subseteq E$, among all sets $C'\in \Ccal$ with $e\in C' \sub X \cup C$ there is one for which $C'\sm X$ is minimal.
\item [\rm (O3$^*$)]For all $e\in D\in \Dcal$ and $X\subseteq E$, among all sets $D'\in \Dcal$ with $e\in D' \sub X \cup D$ there is one for which $D'\sm X$ is minimal.
\end{itemize}
\end{theorem}

\noindent
It is not known whether the countability assumption in this theorem is necessary.

\medskip
In the presence of (O1) and~(O2), the statements (O3) and~(O3$^*$) follow (even for uncountable~$E$) from {\it tameness\/}, the property that every intersection of a circuit and a cocircuit is finite. Countable tame matroids, therefore, have a particularly simple axiomatization, the only non-trivial one of which to check is usually~(O2):

\begin{theorem}[Bowler \& Carmesin~\cite{BC13:Determinacy}]\label{Taxioms}
If $E$ is countable, two sets $\Ccal,\Dcal\sub 2^E$ are the sets of circuits and cocircuits of a tame matroid on $E$ if and only if they satisfy the following:\vskip-3pt\vskip-3pt
\begin{itemize}\itemsep=0pt
	\item[\rm (C1)] $\emptyset\notin \Ccal$
Ê Ê Ê Ê \item[\rm (C1$^*$)] $\emptyset\notin \Dcal$
	\item[\rm (C2)] No element of $\Ccal$ is a subset of another. Ê
Ê Ê Ê Ê \item[\rm (C2$^*$)] No element of $\Dcal$ is a subset of another.
Ê Ê Ê Ê \item[\rm (O1)] $|C\cap D|\neq 1$ for all $C\in \Ccal$ and $D\in \Dcal$.Ê
Ê Ê Ê Ê \item[\rm (O2)] For all partitions $E=P\cup Q\cup \{e\}$
either $P+e$ includes an element of $\Ccal$ containing~$e$ or
$Q+e$ includes an element of $\Dcal$ containing~$e$.
\item[\rm (T)] $|C\cap D|$ is finite for all $C\in \Ccal$ and $D\in \Dcal$.
\end{itemize}
\end{theorem}

The most useful way to apply Theorem~\ref{Taxioms} is that (O1), (O2) and (T) are enough to generate a matroid: if we have sets $\Ccal$ and $\Dcal$
satisfying these axioms then their minimal nonempty members give the
circuits and cocircuits of a matroid~\cite{BC13:Determinacy}.

Given a set $\Ccal\subseteq 2^E$, let us write $\Ccal^\perp$ for the set of subsets of $E$ that meet no element of $\Ccal$ exactly once. (Thus, (O1) says that $\Dcal$ is a subset of~$\Ccal^\perp$, or equivalently that $\Ccal$ is a subset of~$\Dcal^\perp$.) Bowler and Carmesin~\cite{BC13:Determinacy} have shown that $\Ccal$ and $\Ccal^\perp$ satisfy (O2) if and only if
$\Ccal$ satisfies the circuit elimination axiom~$(C3)$.

\section{Examples of non-matroids}\label{non-matroids}

In this section we illustrate our axioms by examples of set systems that narrowly fail to be matroids, by missing just one axiom of a given set. In particular, the axioms within each system will be seen to be independent.

We start with an example mentioned before:

\begin{example}\label{finitesets}
Let $E$ be infinite. The set $\I$ of finite subsets of $E$ satisfies {\rm(I1)--(I3)}, even~{\rm(I3$'$).} But $\I$  has no maximal element, so it violates {\rm(IM)} and~{\rm(I1$'$)}.%
   \COMMENT{}
%\hfill$\square$
\end{example}

Our next example shows that, although we can now deal with infinite sets, matroids still live in the discrete world:

\begin{example}\label{topclosure}
The usual topological closure operator for subsets of $\R$ satisfies the closure axioms {\rm(CL1)--(CL4)} for $E:=\R$, but not~{\rm(CLM).}
\end{example}

\proof
To see that (CLM) fails, notice that independent sets must be discrete. Hence there is no maximal such set in~$\R$.%
   \COMMENT{}
\endproof

We continue with two examples showing that neither of our rank axioms (R4) and (RM) implies the other, given (R1)--(R3).

\begin{example}\label{RMfails}
Let $E$ be infinite. Given $B\sub A\sub E$ define $r(A|B):= |A|-|B|$, with $\infty-\infty:=0$. Then $r$ satisfies {\rm(R1)--(R4)} but not~{\rm (RM)}.
\end{example}

\proof
(RM) fails, because the $r$-independent sets are precisely the finite sets.
\endproof

\begin{example}\label{R4fails}
Let $E$ be infinite. Given $B\sub A\sub E$, let $r(A|B)$ be $0$ if $A$ and $B$ are either both finite or both infinite, and $1$ otherwise. Then $r$ satisfies {\rm(R1)--(R3)} and {\rm(RM),} but not~{\rm (R4)}.
\end{example}

\proof
(RM) holds, since $\es$ is the only $r$-independent set. To see that (R4) fails, let $B:=\es$ and consider as $(A_\gamma)_\gamma$ an infinite increasing chain of finite sets.
\endproof

Our motivation for the alternative axiom systems given in Section~\ref{alt} was to replace our axiom~(M) with something weaker. It led us to replace it by~(I1$'$), while strengthening the extension axiom (I3) to~(I3$'$) or the base exchange axiom (B2) to~(B2$'$). We now show that (I1$'$) cannot be weakened further on the basis of (I2) and~(I3$'$), the other alternative independence axioms from Theorem~\ref{Ialtdel}.

Example~\ref{finitesets} shows that we cannot replace (I1$'$) with~(I1): it is not enough that some independent set exists, we need a maximal one. But then (I3$'$) gives us many more. This led Higgs~\cite{Higgs69equicard} to ask whether our set of axioms from Theorem~\ref{Ialtdel}, with
\begin{itemize}
\item[\rm (I0)] $\I$ has a maximal element
\end{itemize}

\noindent
instead of~(I1$'$), would yield a (B-) matroid. If that was the case, then the axiom (M) common to all our systems would also be too strong: we could extract its `extension' part as~(I3$'$) and limit its `existence' part to~(I0). Of course, we would still want restrictions of matroids to be matroids, so (I1$'$)---which is (I0) for restrictions---should, somehow, follow.

Our next example shows that it does not: the example satisfies all axioms other than those of type (M) or type~1$'$,%
   \COMMENT{}
   including~(I0), but the set $X\sub E$ (see below) has no maximal independent subset.

To define this example, let $X = \{x_0,x_1,\dots\/\}$ and $Y = \{y_0,y_1,\dots\}$ be disjoint infinite sets, and let $G$ be the graph with vertex set $E:= X\cup Y$ and edge set $\{\,x_iy_i\mid i\in\N\,\}$, an infinite matching. Let $\B$ be the set of all transversals of the edges that meet $X$ only finitely, i.e., of all subsets $U$ of $X\cup Y$ satisfying $|U\cap\{x_i,y_i\}| = 1$ for every~$i$ and $|U\cap X| < \infty$ (Fig.~\ref{matching}). Let us call the elements of $\B$ the {\em skew transversals\/} of~$G$. Put $\I:=\dcl{\B}$, call its elements {\em independent\/} and the elements of $2^E\sm\I$ {\em dependent\/} sets. Let $\C$ be the set of the minimal dependent sets, or {\em circuits\/}, and let cl be the closure operator associated with~$\I$.

\begin{figure}[htbp]
  \centering
%\epsfxsize=0.7\hsize
%\noindent
%\epsfbox{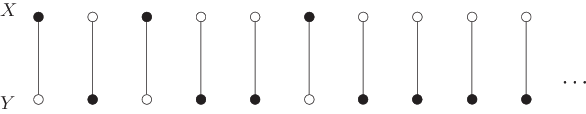}
  \includegraphics[width=0.7\linewidth]{}
  \caption{The set of black vertices is an element of~$\B$.}
  \label{matching}
\end{figure}

\begin{example}\label{example2}
Let $\B$ be the set of skew transversals of the graph $G$ shown in Figure~\ref{matching}, and let $\I$, $\C$ and $\cl$ be defined as above.\vskip-3pt\vskip-3pt
\begin{enumerate}[\rm(i)]\itemsep=0pt
\item $\I$ satisfies {\rm(I0)--(I3)} and {\rm(I3$'$),} but not~{\rm(I1$'$)} or {\rm(IM)}.
\item $\B$ satisfies {\rm(B0)--(B2)} and~{\rm(B2$'$)}, but not~{\rm(B1$'$)} or {\rm(BM)}.
\item Not every dependent set contains a circuit.
\item $\C$ satisfies {\rm(C1)--(C3)}, but not~{\rm(CM)}.
\item The operator $\cl$ satisfies {\rm(CL1)--(CL4)}, but not~{\rm(CLM)}.\label{clitem}
\item There is no relative rank function associated with~$\I$.
\end{enumerate}
\end{example}

\proof
There are two properties of a set $U\sub E$ that will each make it dependent: meeting $X$ infinitely, and containing an edge $\{x_i,y_i\}$. Every single edge is a circuit, while dependent transversals meeting $X$ infinitely contain no circuit. So the circuits are precisely the single edges. $X$~itself, then, is dependent but contains no circuit, showing~(iii). Its independent subsets are its finite subsets. It thus has no maximal independent subset and hence violates (I1$'$) and~(B1$'$). This proves (i) and~(ii). (The other axioms are easy to check.) Since no two distinct circuits meet, (C3)~holds vacuously. Since $X$ violates~(M) for~$\I$, none of (IM), (BM), (CM) or (CLM) holds. Finally, the closure $\cl(U)$ of a set $U$ is obtained by adding to $U$ all those $x_i$ for which $y_i\in U$, and all those $y_i$ for which $x_i\in U$.%
   \COMMENT{}
   Statement~\eqref{clitem} follows. Since $X$ does not contain a maximal independent set, $r(A|X)$ would not be well-defined for any $A\supseteq X$, and so we have (vi).
\endproof

As we saw in Section~\ref{examples}, the elementary algebraic cycles of a graph~$G$, the edge sets of its finite cycles and double rays, are the circuits of a matroid if and only if $G$ contains no subdivision of the Bean graph (Fig.~\ref{Beangraph}). This means that, at least in this class of examples,%
   \COMMENT{}
   the Bean graph `only just' fails to define a matroid. And indeed, we shall see below that its elementary algebraic cycles again violate exactly one of the axioms in each set.

To prove this, we need a formal definition of the Bean graph. Let it be the graph consisting of two disjoint rays $R,S$ with starting vertices $u\in R$ and $v\in S$ and all possible edges from $v$ to~$R$. Write $y$ for the edge~$uv$, and pick an edge $z\in E(S)$. 
Let $\C$ be the set of the elementary algebraic cycles of this graph, and call them {\em circuits\/}. Write $\I$ for the set of edge sets not containing a circuit, put $\B:=\Imax$, let $\cl$ be the closure operator and $r$ any rank function associated with~$\I$, and call the sets in $2^{E(G)}\sm\I$ {\em dependent\/}.

\begin{example}\label{Bean}
Let $\C$ be the set of elementary algebraic cycles of the Bean graph, and let $\I$, $\B$, $\cl$ and $r$ be as defined above.\vskip-3pt\vskip-3pt
\begin{enumerate}[\rm(i)]\itemsep=0pt
\item $\C$ satisfies {\rm(C1)}, {\rm(C2)}, {\rm(CM)} and the usual finite circuit elimination axiom, but not the infinite elimination axiom~{\rm(C3)}.
\item Every dependent set contains a circuit.
\item $\I$ satisfies {\rm(I0), (I1) (}even {\rm(I1$'$)),} {\rm(I2)} and {\rm(IM)}, but not~{\rm(I3)} or {\rm(I3$'$)}.
\item $\B$ satisfies {\rm(B0)}, {\rm(B1) (}even {\rm(B1$'$))} and {\rm(BM)}, but not~{\rm(B2)} or {\rm(B2$'$)}.
\item The operator $\cl$ satisfies {\rm(CL1)}, {\rm(CL2)}, {\rm(CL4)} and {\rm(CLM)}, but not~{\rm(CL3)}.%
\item The function $r$ satisfies {\rm (R1)}, {\rm (R2)}, {\rm (R4)}, {\rm (R4$'$)} and {\rm (RM)}, but not {\rm (R3)}.
\end{enumerate}
\end{example}

\proof
(i) Assertions (C1) and~(C2) are trivial. The circuit elimination axiom for just two circuits is easily checked by straightforward case analysis for the current graph, or proved in general as for finite graphs by considering vertex degrees.%
   \COMMENT{}
   For a proof of~(CM), let $X$ be any set of edges, and let $I\subseteq X$ be an independent set. Using Zorn's Lemma we find a maximal subset $B$ of $X$ that contains $I$ but contains no finite circuit. (Such a set $B$ consists of  a spanning tree in each component of the corresponding subgraph). If $B$ contains no double ray either, it is a maximal independent subset of~$X$ containing $I$. 
If it does, then this double ray $D$ is unique: since any double ray in the Bean graph has to link its two ends, two distinct double rays would form a finite cycle between them. As $I$ is independent, $D$~contains an edge $x\notin I$. By the circuit elimination axiom for two circuits, $B-x$ is a maximal independent subset of~$X$ as required by~{\rm(CM)}.%
   \COMMENT{}

To see that (C3) fails, consider the circuit $C := E(R)\cup E(S) +y$, its subset $X:= E(R)$, the edge $z\in E(S)\sub C$, and for every $x\in X$ as $C_x$ the unique triangle containing~$x$.

(ii) This is trivial.

(iii) While (I0) is easy (consider $E(R)\cup E(S) + y - z$), (I1)~and (I2) are trivial. We have already proved~(IM), which implies~(I1$'$). To see that (I3) and (I3$'$) fail, consider as $I$ the set of unbroken edges in Figure~\ref{Beangraph}, and as~$I'$ the set $E(R)\cup E(S) + y - z$. Then $I\in \I\sm\Imax$ (since we can add $z$ and remain independent), while $I'\in\Imax$. But, clearly, $I$~does not extend properly to any independent subset of $I\cup I'$.%
   \COMMENT{}

(iv) $\B$ is the set of spanning trees not containing a double ray. It clearly satisfies (B0) and~(B1). We have already proved~(BM), and this implies~(B1$'$). To see that (B2) fails (and with it~(B2$'$)), consider as $B_1$ the set of unbroken edges in Figure~\ref{Beangraph} plus the edge~$z$, and as $B_2$ the set $E(R)\cup E(S) + y - z$. Then we cannot delete $z$ from $B_1$, add an edge of $B_2\sm B_1$, and remain independent.

(v) While (CL1), (CL2) and~(CL4)%
   \COMMENT{}
   are trivial, we have already proved~(CLM). To see that (CL3) fails, consider as $X$ the set of unbroken edges in Figure~\ref{Beangraph}. Its closure $\cl(X)$ contains all the broken edges except~$z$,%
   \COMMENT{}
   but $\cl(\cl(X))$ contains $z$ as well.%
   \COMMENT{}
   
(vi) (R1) is trivial, and (R2) derives from (IM) and~(I2).%
   \COMMENT{}
   With $B_2$ as above, we have $r(E|B_2)=0\neq 1+0=r(E\,|\,E-z)+r(E-z\,|\,B_2)$, so (R3) fails. To see that (R4) holds, note that $r(A|B)=0$ if and only if $J+x$ contains a circuit for each maximal $J$ in $\I\cap 2^B$ and each $x\in A\sm B$. (R4$'$) is straightforward from~(RM), proved below, and the definition of $r$. Finally, if $I\subseteq E$, then $I$ contains a circuit if and only if $r(I\,|\,I-x)=0$ for some $x\in I$. Thus the set of $r$-independent sets equals $\I$, and hence (RM) holds.
\endproof

\bibliographystyle{amsplain}
\bibliography{collective}
\small
\vskip2mm plus 1fill
Version 23 February 2013

\end{document}